\documentclass[a4paper,fleqn,longmktitle]{cas-sc}

\usepackage[numbers]{natbib}
\usepackage{amsmath,amssymb,mathtools,amsthm}
\usepackage{tikz}
\usepackage{float}
\hypersetup{hypertexnames=false}
\usetikzlibrary{arrows.meta,positioning,calc}

\ExplSyntaxOn
\cs_if_exist:NF \vbox_unpack_clear:N
  { \cs_new_eq:NN \vbox_unpack_clear:N \vbox_unpack_drop:N }
\ExplSyntaxOff

\newtheorem{theorem}{Theorem}[section]
\newtheorem{proposition}[theorem]{Proposition}
\newtheorem{lemma}[theorem]{Lemma}
\newtheorem{corollary}[theorem]{Corollary}
\newtheorem{definition}[theorem]{Definition}
\newtheorem{remark}[theorem]{Remark}
\newtheorem{example}[theorem]{Example}

\DeclareMathOperator{\Inv}{Inv}
\DeclareMathOperator{\Adj}{Adj}

\DeclareMathOperator{\sgn}{sgn}

\newcommand{\calF}{\mathcal{F}}

\newcommand{\set}[1]{\left\{#1\right\}}

\newcommand{\symdiff}{\mathbin{\triangle}}

\begin{document}
\let\WriteBookmarks\relax
\def\floatpagepagefraction{1}
\def\textpagefraction{.001}

\shorttitle{Rational Weyl group elements of odd type D}
\shortauthors{Yutong Zhang and Yaoran Yang}

\title[mode=title]{Rational Weyl group elements of odd type D}

\author[1]{Yutong Zhang}
\cormark[1]
\ead{yutongzhang@stu.scu.edu.cn}
\credit{Conceptualization, Methodology, Formal analysis, Writing - original draft}

\author[1]{Yaoran Yang}
\ead{yangyaoran@stu.scu.edu.cn}
\credit{Formal analysis, Validation, Writing - review and editing}

\affiliation[1]{organization={School of Mathematics, Sichuan University},
            addressline={24 First Loop Road South Section I},
            city={Chengdu},
            postcode={610064},
            state={Sichuan},
            country={China}}

\cortext[1]{Corresponding author}

\begin{abstract}
Voloshyn introduced rational Weyl group elements in connection with rational normal forms on complex reductive groups and conjectured that their number in type $D_r$, for odd $r$, is $2^r-1$.  We prove a stronger structural statement.  For every odd integer $r\geq 5$, the rational elements of $W(D_r)$ are precisely the longest element $w_0$ and two explicitly described signed cyclic elements $c_I,d_I$ for each non-empty subset $I\subseteq\{1,\ldots,r-1\}$.  Consequently, the rationality graph $\Gamma(D_r)$ is obtained by gluing two explicitly labelled subset-toggle graphs at $w_0$; it has $2^r-1$ vertices, and its only vertices of valency one are $c_{\{1\}}$ and $d_{\{1\}}$.  The proof combines a two-level acyclic description of the root-poset graphs $\Gamma_{c_I}$ with a rigidity theorem for simple left multiplications of the signed cyclic family.  A self-contained simply-laced reflection-preservation lemma and a terminal-layer argument provide the descent step, while every forbidden one-step move from the family is excluded by an explicit loop or two-cycle.
\end{abstract}

\begin{keywords}
Weyl group \sep root poset \sep rationality graph \sep type $D$ \sep Coxeter group
\end{keywords}

\maketitle

\section{Introduction}
\label{sec:intro}

Let $G$ be a connected complex reductive algebraic group.  Fix opposite Borel subgroups $B_+,B_-$, let $N_+,N_-$ be their unipotent radicals, and put $H=B_+\cap B_-$.  Thus $W=N_G(H)/H$ is the Weyl group.  After choosing a positive system $\Pi_+$, Voloshyn studied rational decompositions
\begin{equation}
  g=N(g)B(g)\overline{u}N(g)^{-1},
  \qquad
  N:G\dashrightarrow N_-,\quad B:G\dashrightarrow B_+,
  \label{eq:intro-normal-form}
\end{equation}
where $\overline u\in N_G(H)$ represents $u\in W$ and the identity is required for generic $g\in G$.  He associated to each $u\in W$ a descending sequence
\begin{equation*}
  \nu_0(u)=u(\Pi_+)\cap \Pi_+,
  \qquad
  \nu_k(u)=u(\Adj \nu_{k-1}(u))\cap \Pi_+,
  \qquad k\geq 1,
\end{equation*}
and called $u$ \emph{rational} when the stable value $\nu(u)$ is empty \cite{Voloshyn2026}.  Equivalently, the oriented graph with vertex set $\nu_0(u)$ and arrows
\begin{equation}
  \alpha\longrightarrow \beta
  \quad\Longleftrightarrow\quad
  u^{-1}(\alpha)\leq \beta
  \label{eq:intro-arrow}
\end{equation}
has no directed cycle.  The rational Weyl group elements form the vertex set of another graph, denoted $\Gamma(W)$, in which two vertices are adjacent when they differ by left multiplication by a simple reflection.  For an irreducible root system, Voloshyn proved that $\Gamma(W)$ is connected and that it has more than one vertex precisely in types $A_r$ for $r\geq 2$, $D_r$ for odd $r$, and $E_6$; he also conjectured that in type $D_r$, $r$ odd, the number of rational elements is
\begin{equation}
  \#\{u\in W(D_r):u\text{ rational}\}=2^r-1.
  \label{eq:intro-count-conj}
\end{equation}
The computed values $31,127,511,2047,8191,32767$ for $r=5,7,9,11,13,15$ support \eqref{eq:intro-count-conj} \cite[Table~2]{Voloshyn2026}.  A second expectation in the same work is that the type-$D$ rationality graph has exactly two vertices of valency one; two such vertices had already been constructed explicitly in \cite[Proposition 4.15]{Voloshyn2026}.

The Coxeter-theoretic conventions used below are standard.  We use the usual
length function, simple reflections, Coxeter length criterion, and root-poset
terminology following \cite[Section~1.7]{Humphreys1990}, together with the
combinatorial Coxeter-group viewpoint of \cite[Chapter~4]{BjornerBrenti2005}.
Our root-system conventions agree with Bourbaki's presentation
\cite[Chapter~VI]{Bourbaki2002}, and the signed-permutation realization of
classical Weyl groups is the standard one described, for example, in Carter's
account of groups of Lie type \cite[Section~2.10]{Carter1972}.

The purpose of this paper is to prove a classification theorem that establishes \eqref{eq:intro-count-conj}, determines the graph $\Gamma(D_r)$, and verifies the valency-one expectation.  We use the standard signed-permutation model of $W(D_r)$.  Thus $W(D_r)$ is the subgroup of the signed symmetric group on an orthonormal basis $e_1,\ldots,e_r$ consisting of signed permutations with an even number of sign changes.  The positive and simple roots are
\begin{align}
  \Pi_+
  &=\set{e_i-e_j:1\leq i<j\leq r}
    \cup
    \set{e_i+e_j:1\leq i<j\leq r},
    \label{eq:positive-roots-intro} \\
  \Delta
  &=\set{\alpha_i=e_i-e_{i+1}:1\leq i\leq r-1}
    \cup
    \set{\alpha_r=e_{r-1}+e_r}.
    \label{eq:simple-roots-intro}
\end{align}
Write $\Pi_-=-\Pi_+$, and let $s_a=s_{\alpha_a}$ denote the reflection associated with $\alpha_a$.  For odd $r$, the longest element is $w_0(e_i)=-e_i$ for $1\leq i\leq r-1$, while $w_0(e_r)=e_r$.  Figure~\ref{fig:Dynkin-D} records this numbering.

For a subset $I=\{i_1<\cdots<i_k\}\subseteq\{1,\ldots,r-1\}$, $I\neq\varnothing$, define a cycle $p_I\in S_r$ by
\begin{equation}
  p_I(i_a)=i_{a+1}\;(1\leq a<k),
  \qquad
  p_I(i_k)=r,
  \qquad
  p_I(r)=i_1,
  \qquad
  p_I(j)=j\;(j\notin I\cup\{r\}).
  \label{eq:pI-intro}
\end{equation}
Equivalently, $p_I$ is the cycle $(i_1\ i_2\ \cdots\ i_k\ r)$ in the convention that $p_I(a)$ is the image of $a$.  We define two signed permutations $c_I,d_I\in W(D_r)$ by
\begin{align}
  c_I(e_j)&=-e_{p_I(j)}\quad(1\leq j\leq r-1),
  & c_I(e_r)&=e_{i_1},
  \label{eq:cI-intro} \\
  d_I(e_j)&=-e_{p_I(j)}\quad(j\in\{1,\ldots,r-1\}\setminus\{i_k\}),
  & d_I(e_{i_k})&=e_r,
  & d_I(e_r)&=-e_{i_1}.
  \label{eq:dI-intro}
\end{align}
The main theorem is the following.

\begin{theorem}
\label{thm:main-classification}
Let $r\geq 5$ be odd.  The rational Weyl group elements of type $D_r$ are precisely
\begin{equation*}
  \calF_r
  =\set{w_0}
   \cup
   \set{c_I,d_I:\varnothing\neq I\subseteq \{1,\ldots,r-1\}}.
\end{equation*}
Hence
\begin{equation}
  \#\calF_r=1+2(2^{r-1}-1)=2^r-1.
  \label{eq:main-count}
\end{equation}
Moreover $\Gamma(D_r)$ is determined by the following adjacency rules.  With the convention $c_\varnothing=d_\varnothing=w_0$, for $1\leq a\leq r-2$,
\begin{align}
  c_I\sim c_{I\symdiff\{a\}}
  &\quad\Longleftrightarrow\quad a+1\in I,
  \label{eq:main-c-horizontal} \\
  d_I\sim d_{I\symdiff\{a\}}
  &\quad\Longleftrightarrow\quad a+1\in I,
  \label{eq:main-d-horizontal}
\end{align}
and the two spin adjacencies are
\begin{equation}
  c_I\sim c_{I\symdiff\{r-1\}},
  \qquad
  d_I\sim d_{I\symdiff\{r-1\}}.
  \label{eq:main-spin-adj}
\end{equation}
There are no other edges.  In particular, $c_{\{1\}}$ and $d_{\{1\}}$ are the only vertices of valency one in $\Gamma(D_r)$.
\end{theorem}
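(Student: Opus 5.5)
The plan is to prove the classification in two halves: every element of $\calF_r$ is rational, and no other element is. The second half will go through the connectedness of $\Gamma(D_r)$ proved by Voloshyn. Since $\Gamma(D_r)$ is connected and contains $w_0$, it suffices to show two things. First, every element of $\calF_r$ is rational. Second, $\calF_r$ is closed under rational neighbours: if $u\in\calF_r$, $a\in\Delta$, and $s_a u$ is rational, then $s_au\in\calF_r$. Closure forces the connected component of $w_0$ to lie inside $\calF_r$, so the rational set equals $\calF_r$. The count \eqref{eq:main-count} is then immediate, because the $c_I,d_I$ are pairwise distinct and distinct from $w_0$. This is read off from where $e_r$ is sent and from the cycle $p_I$.

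\textbf{Step 1: rationality of the family.} For $w_0$, the set $\nu_0(w_0)=w_0(\Pi_+)\cap\Pi_+$ consists only of roots $\pm e_i\pm e_r$ fixed up to sign by $w_0$, so I would check directly that the arrow graph \eqref{eq:intro-arrow} is acyclic. For $c_I$, I would compute $\nu_0(c_I)$ explicitly in the signed-permutation model. The roots $e_j\pm e_l$ with $j,l\notin I\cup\{r\}$ behave as under $w_0$, while those touching $I\cup\{r\}$ are shifted along the cycle $p_I$. I would then exhibit a height-type function on $\nu_0(c_I)$ that strictly decreases along arrows. My candidate is a two-level function: first the position of the indices in the cycle $(i_1\cdots i_k\,r)$, then ordinary height. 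This should show that $\Gamma_{c_I}$ is acyclic. The case $d_I$ should reduce to $c_I$ via the diagram automorphism swapping $\alpha_{r-1},\alpha_r$, that is, conjugation by the sign change on $e_r$ composed with $w_0$. I would verify directly that this map sends $c_I$ to $d_I$ and preserves rationality.

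\textbf{Step 2: left multiplications.} Next I would compute $s_a c_I$ for every simple $a$. For $a\le r-2$, $s_a$ swaps $e_a,e_{a+1}$, so $s_ac_I$ is the same signed permutation with positions $a,a+1$ relabelled. When $a+1\in I$, I would check by direct comparison of images that this equals $c_{I\symdiff\{a\}}$; this is the content of \eqref{eq:main-c-horizontal}. The one-step identity $s_{r-1}c_I$ or $s_rc_I=c_{I\symdiff\{r-1\}}$ should likewise give the spin adjacency \eqref{eq:main-spin-adj}, and the same holds for $d_I$ by the automorphism. For each remaining pair $(u,a)$ with $u\in\calF_r$, I would show $s_au$ is not rational by exhibiting an explicit directed cycle in its arrow graph. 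In most cases this will be a loop, namely a root $\alpha\in\nu_0$ with $u^{-1}s_a(\alpha)\le\alpha$, and otherwise a two-cycle between a pair of roots in $\{e_a\pm e_{a+1}, e_{a+1}\pm e_r\}$.

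\textbf{Main obstacle.} The hardest step is the case analysis for the forbidden moves. The relevant cases are $a\le r-2$ with $a+1\notin I$, which splits according to whether $a\in I$ and whether $a=i_k$, together with the two spin reflections applied to $d_I$ versus $c_I$. In each case the witnessing cycle has to be chosen uniformly in $I$. I would first try the loop at $e_a-e_{a+1}$ or $e_{a+1}+e_r$ and fall back to two-cycles where the loop fails.

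\textbf{Step 3: consequences.} Once the edges are determined, I would read off valencies. Vertex $c_I$ is joined to $c_{I\symdiff\{a\}}$ for each $a$ with $a+1\in I$ and $a\le r-2$, and also to $c_{I\symdiff\{r-1\}}$. For $I$ nonempty, valency one forces $I$ to contain no element $\ge2$, so $I=\{1\}$. Conversely, $w_0$ has valency two, since its neighbours are $c_{\{r-1\}}$ and $d_{\{r-1\}}$. This gives the final claim, that $c_{\{1\}}$ and $d_{\{1\}}$ are the only vertices of valency one.
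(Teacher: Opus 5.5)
Your proof is correct in outline but reaches the reverse inclusion by a different route from the paper. You take Voloshyn's theorem that $\Gamma(D_r)$ is connected as a black box. You then need only that $\calF_r$ is closed under rational left neighbours: walking from $w_0$ along a path of rational elements, every step stays in $\calF_r$.

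The paper does not use connectedness. It proves a descent lemma (Lemma~\ref{lem:voloshyn-descent}) from the simply-laced reflection-preservation Lemma~\ref{lem:reflection-preservation}, together with a terminal-layer argument on the last non-empty $\nu_m(u)$. The lemma says every rational $u\neq w_0$ has a simple $s_a$ with $s_au$ rational and $\ell(s_au)=\ell(u)+1$. The paper then inducts on $\ell(w_0)-\ell(u)$.

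The combinatorial input is the same in both versions: the one-step rigidity of Proposition~\ref{prop:onestep-rigidity}. Your version is shorter, but the classification then rests on Voloshyn's global theorem. The paper's version is self-contained and in effect reproves connectedness in odd type $D$. It also gives the sharper fact that every rational element is joined to $w_0$ by a length-increasing chain of rational elements.

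A few details in your sketch need correcting when you carry it out.
\begin{itemize}
\item Since $w_0$ sends every positive root to a negative one, $\nu_0(w_0)=\varnothing$. There are no roots $\pm e_i\pm e_r$ to examine.
\item The automorphism you want is conjugation by $\tau$, the sign change on $e_r$, alone. Because $\tau=-w_0$, this agrees with conjugation by $w_0$ on $W$. By contrast, $\tau w_0=-\mathrm{id}$ acts trivially by conjugation, so the map as you literally described it would not send $c_I$ to $d_I$.
\item For acyclicity, ordinary height increases along some arrows. For example, with $I=\{1\}$ there is an arrow $e_{r-1}-e_r\to e_1-e_r$. The grading that works is the two-level split $\nu_0(c_I)=A_I\sqcup B_I$: there are no arrows inside $A_I$ and none leaving $B_I$.
\item The spin identity is $s_{r-1}c_I=c_{I\symdiff\{r-1\}}$ for every $I$, while $s_rc_I$ is never rational for $I\neq\varnothing$.
\item Your guessed certificates are partly off. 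For $a\in I$, $a+1\notin I$, the loop vertex is $e_a-e_{n_I(a)}$, the image of $e_a-e_{a+1}$; it is not $e_a-e_{a+1}$ itself.
\item For $s_rc_I$ the loops sit at $e_{i_1}+e_{r-1}$ or at $e_{i_1}+e_r$. The only two-cycle needed is the spin pair $\alpha_{r-1}\leftrightarrow\alpha_r$, for $I=\{r-1\}$.
\end{itemize}
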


The element $c_{\{1\}}$ is exactly the element denoted $C$ in \cite[Proposition 4.14]{Voloshyn2026}; its inverse is $d_{\{1\}}$.  Thus Theorem~\ref{thm:main-classification} simultaneously strengthens Voloshyn's type-$D$ counting conjecture and his valency-one expectation.  The argument is uniform in $r$ and rests on the following two steps.

First, for each $I$ the graph $\Gamma_{c_I}$ has a two-level form.  If $n_I(a)$ denotes the next element of $I\cup\{r\}$ after $a\in I$, then
\begin{equation*}
  \nu_0(c_I)=A_I\sqcup B_I,
  \qquad
  A_I=\set{e_b-e_{n_I(a)}\mid a\in I,\ a<b<n_I(a)},
  \qquad
  B_I=\set{e_{i_1}-e_q\mid i_1<q\leq r}.
\end{equation*}
Every arrow out of $A_I$ lands in $B_I$, and no arrow leaves $B_I$.  Hence $c_I$ is rational.  The elements $d_I$ follow from the diagram automorphism interchanging the two spin nodes.

Second, rationality is rigid under one-step descent.  Lemma~\ref{lem:voloshyn-descent}, based on the self-contained reflection-preservation Lemma~\ref{lem:reflection-preservation} and a terminal-layer argument that includes the case $m=0$, shows that every rational $u\neq w_0$ admits a simple reflection $s_a$ for which $s_a u$ is rational and $\ell(s_a u)=\ell(u)+1$.  We prove that whenever $v\in\calF_r$ and $s_a v$ is rational, then $s_a v\in\calF_r$.  Every forbidden one-step move from the signed cyclic family is excluded by an explicit loop or two-cycle in the root-poset graph \eqref{eq:intro-arrow}.  Induction on $\ell(w_0)-\ell(u)$ then gives the reverse inclusion.

\begin{figure}[pos=t]
\centering
\begin{tikzpicture}[scale=.82, every node/.style={font=\small}]
  \node (a1) at (0,0) {$1$};
  \node (a2) at (1.1,0) {$2$};
  \node (dots) at (2.2,0) {$\cdots$};
  \node (arm) at (3.5,0) {$r-2$};
  \node (ar1) at (4.7,.62) {$r-1$};
  \node (ar) at (4.7,-.62) {$r$};
  \draw (a1)--(a2)--(dots)--(arm);
  \draw (arm)--(ar1);
  \draw (arm)--(ar);
\end{tikzpicture}
\caption{The type-$D_r$ numbering used throughout.  The automorphism $\tau$ interchanges the two spin nodes $r-1$ and $r$.}
\label{fig:Dynkin-D}
\end{figure}

\section{Root-poset preliminaries and the signed cyclic family}
\label{sec:prelim}

Throughout, $r\geq 5$ is odd, and the type-$D_r$ root system is realized as in~\eqref{eq:positive-roots-intro}--\eqref{eq:simple-roots-intro}.  We use the usual root-poset order on $\Pi_+$: namely, $\rho\leq\eta$ if and only if $\eta-\rho\in\sum_{a=1}^r\mathbb Z_{\geq0}\alpha_a$.  If $x=(x_1,\ldots,x_r)\in\mathbb R^r$ is written as $x=\sum_{a=1}^r m_a(x)\alpha_a$, then
\begin{align*}
  m_a(x)&=x_1+\cdots+x_a,
  &&1\leq a\leq r-2, \\
  m_{r-1}(x)&=\frac{x_1+\cdots+x_{r-1}-x_r}{2},
  &
  m_r(x)&=\frac{x_1+\cdots+x_{r-1}+x_r}{2}.
\end{align*}
Consequently
\begin{equation}
  \rho\leq \eta
  \quad\Longleftrightarrow\quad
  m_a(\eta-\rho)\geq 0\quad(1\leq a\leq r).
  \label{eq:root-order-coeff}
\end{equation}

\begin{lemma}[Elementary order tests]
\label{lem:order-tests}
Let $1\leq a<b\leq r$ and $1\leq c<d\leq r$.  Then
\begin{equation}
  e_a-e_b\leq e_c-e_d
  \quad\Longleftrightarrow\quad
  c\leq a<b\leq d.
  \label{eq:minus-minus-order}
\end{equation}
For $1\leq a,b<r$,
\begin{equation}
  e_b+e_r\leq e_a+e_r
  \quad\Longleftrightarrow\quad
  a\leq b.
  \label{eq:plus-plus-order-1}
\end{equation}
For $1\leq a<r-1$ and $1\leq b<r$,
\begin{equation}
  e_b+e_r\leq e_a+e_{r-1}
  \quad\Longleftrightarrow\quad
  a\leq b.
  \label{eq:plus-plus-order-2}
\end{equation}
Finally, if $1\leq a<r$ and $1\leq c<d\leq r$, then
\begin{equation}
  e_a+e_r\nleq e_c-e_d.
  \label{eq:plus-not-minus}
\end{equation}
\end{lemma}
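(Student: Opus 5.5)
All four statements reduce to the coefficient criterion \eqref{eq:root-order-coeff}. For each of them I would write down the difference $x=\eta-\rho$ explicitly in coordinates, compute the partial sums $m_1(x),\ldots,m_{r-2}(x)$ together with the two half-sums $m_{r-1}(x),m_r(x)$, and read off exactly when all of these are non-negative. Since each difference has at most four non-zero coordinates, each equal to $\pm1$, the partial sums form a short step function of $a$. The sign conditions then become conditions on the relative positions of the indices.

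For \eqref{eq:minus-minus-order}, take $x=e_c-e_d-e_a+e_b$. The two half-sums both equal $\tfrac12\bigl(\sum_{i<r}x_i\bigr)\pm\tfrac12 x_r$. I would first check that the whole-sum $\sum_i x_i$ is $0$, so that $m_{r-1}$ and $m_r$ equal $\mp\tfrac12x_r$ plus $\tfrac12\sum_{i\le r}x_i=0$; that is, $m_{r-1}=-x_r$ and $m_r=0$ after simplification. For $1\le t\le r-2$ the partial sum $m_t(x)$ counts $[c\le t]-[d\le t]-[a\le t]+[b\le t]$. I would then argue in two directions.

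\emph{Sufficiency.} If $c\le a<b\le d$, this quantity is always $0$ or $1$, and $x_r\in\{0,-1\}$.

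\emph{Necessity.} Conversely, if $a<c$, take $t=a$ (when $a\le r-2$); then $m_a=-1$. If $a=r-1$ then $c>r-1$ is impossible. Symmetrically, if $d<b$, take $t=d$, or use $m_{r-1}$ when $d=r-1$, $b=r$. Here one needs $x_r=+1$, which gives $m_{r-1}=-1$.

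For \eqref{eq:plus-plus-order-1}, take $x=e_a-e_b$ (the $e_r$ terms cancel). This is a root difference or zero, so the partial sums are non-negative iff $a\le b$. The half-sums equal $\tfrac12(x_1+\dots+x_{r-1})=0$ since $a,b<r$.

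For \eqref{eq:plus-plus-order-2}, take $x=e_a+e_{r-1}-e_b-e_r$. For $t\le r-2$ we get $m_t=[a\le t]-[b\le t]$, which is non-negative for all $t$ iff $a\le b$, using $a<r-1$ to make the test $t=a$ available when $b<a$. For the half-sums, the sum of the first $r-1$ coordinates is $0$ or handles $b=r-1$ appropriately; I would verify $m_{r-1}=1\ge0$ and $m_r=0$ directly, with a separate check when $b=r-1$.

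For \eqref{eq:plus-not-minus}, the difference $x=e_c-e_d-e_a-e_r$ has total coordinate sum $-2$. Hence $m_r(x)=\tfrac12(\sum_{i<r}x_i+x_r)=\tfrac12\sum_i x_i=-1<0$, and the order fails.

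The only delicate point is the bookkeeping at the boundary indices $r-1$ and $r$, where the partial-sum formula switches to the half-sums. I would handle it by always expressing $m_{r-1}$ and $m_r$ through the total sum $\sum_i x_i$ and the single coordinate $x_r$, and then checking the few edge cases ($d=r$, $b=r$, $b=r-1$) by hand.
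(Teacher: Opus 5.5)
Your strategy is the paper's: read off the simple-root coefficients of $\eta-\rho$ and apply \eqref{eq:root-order-coeff}. The arguments for \eqref{eq:plus-plus-order-1} and \eqref{eq:plus-not-minus}, and the sufficiency half of \eqref{eq:minus-minus-order}, are fine. However, two of your necessity witnesses fail as written.

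In \eqref{eq:minus-minus-order}, for the case $d<b$ you test at $t=d$. That gives $m_d(x)=1-1-[a\le d]+0=-[a\le d]$, which is $0$ when $d<a$, i.e.\ when $[c,d)$ lies entirely to the left of $[a,b)$. For instance, $c=1,d=2,a=3,b=4$ gives $x=e_1-e_2-e_3+e_4$ with $m_2(x)=0$. The case $a<c$ does not apply there either, so your argument produces no negative coefficient. The correct witness is the last point of $[a,b)$. If $b\le r-1$, then $m_{b-1}(x)=1-1-1+0=-1$. If $b=r$ (so $d<r$), then $x_r=+1$ and $m_{r-1}(x)=-1$. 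Your rule ``use $m_{r-1}$ when $d=r-1$'' is only a special case of the latter.

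Similarly, in \eqref{eq:plus-plus-order-2} with $b<a$, the index $t=a$ gives $m_a(x)=[a\le a]-[b\le a]=0$. The negative coefficient is at $t=b$, where $m_b(x)=-1$. The hypothesis $a<r-1$ is exactly what guarantees $b\le r-2$, so that $m_b$ is an ordinary partial sum. There, too, the first $r-1$ coordinates of $e_a+e_{r-1}-e_b-e_r$ sum to $1$, not $0$. That is why $m_{r-1}=1$ and $m_r=0$ for every $b$, and no separate check at $b=r-1$ is needed. Likewise, in the first part the half-sums are $\tfrac12\sum_{i\le r}x_i-x_r$ and $\tfrac12\sum_{i\le r}x_i$, not ``$\mp\tfrac12x_r$ plus the total''. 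Your final values $m_{r-1}=-x_r$ and $m_r=0$ are nonetheless right.

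Both slips come from the boundary case-juggling, which the paper avoids with one uniform observation. For $i<j$ one has $m_t(e_i-e_j)=1$ if $i\le t<j$ and $0$ otherwise, for every $1\le t\le r-1$. This includes $t=r-1$, since $m_{r-1}(e_i-e_j)=1$ exactly when $j=r$. Also $m_r(e_i-e_j)=0$. So the coefficient vector of $e_c-e_d-(e_a-e_b)$ on $\{1,\ldots,r-1\}$ is the indicator of $[c,d)$ minus the indicator of $[a,b)$. It is non-negative exactly when $[a,b)\subseteq[c,d)$, i.e.\ $c\le a<b\le d$. Any point of $[a,b)\setminus[c,d)$ serves as a witness, with no special cases. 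With the two witnesses corrected, your proof is complete and coincides with the paper's.
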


\begin{proof}
For a root $e_i-e_j$ with $i<j$, the simple-root coefficients are
\begin{equation*}
  m_t(e_i-e_j)=
  \begin{cases}
    1, & i\leq t<j,\quad 1\leq t\leq r-1,\\
    0, & \text{otherwise},
  \end{cases}
  \qquad
  m_r(e_i-e_j)=0.
\end{equation*}
Thus the coefficients of
$e_c-e_d-(e_a-e_b)$
are the differences of the characteristic functions of the intervals $[c,d)$ and $[a,b)$ on $\{1,\ldots,r-1\}$.  They are all non-negative exactly when
$[a,b)\subseteq [c,d),$
that is, exactly when $c\leq a<b\leq d$.  This proves \eqref{eq:minus-minus-order}; the coefficient $m_{r-1}$ is included in the interval test, while $m_r$ is zero on both roots.

For \eqref{eq:plus-plus-order-1},
$e_a+e_r-(e_b+e_r)=e_a-e_b$
is a non-negative sum of simple roots exactly when $a\leq b$.  Indeed, for $a\leq b$ it is $\alpha_a+\cdots+\alpha_{b-1}$, with the empty sum allowed when $a=b$, and the coefficient of $\alpha_b$ is negative when $a>b$.

For \eqref{eq:plus-plus-order-2},
$e_a+e_{r-1}-(e_b+e_r) =(e_a-e_b)+(e_{r-1}-e_r).$
If $a\leq b$, this is
$\sum_{t=a}^{b-1}\alpha_t+\alpha_{r-1},$
where the formula also covers $b=r-1$.  If $a>b$, the partial-sum coefficient at $t=b$ is negative, so the inequality fails.  Finally, $m_r(e_a+e_r)=1$, whereas $m_r(e_c-e_d)=0$ for all $c<d$; hence \eqref{eq:plus-not-minus} follows from \eqref{eq:root-order-coeff}.
\end{proof}

For $S\subseteq\Pi_+$, write $\Adj(S)=\set{\gamma\in\Pi_+\mid \gamma\leq\beta\text{ for some }\beta\in S}$.
The following definitions are those of Voloshyn \cite{Voloshyn2026}; they are included to fix the notation used in the proof.

\begin{definition}
\label{def:rational}
For $u\in W(D_r)$ define
\begin{align*}
  \nu_0(u)&=u(\Pi_+)\cap \Pi_+, \\
  \nu_k(u)&=u(\Adj(\nu_{k-1}(u)))\cap \Pi_+,
  \qquad k\geq 1.
\end{align*}
The oriented rationality graph of $u$, denoted here by $\Gamma_u$, has vertex set $\nu_0(u)$ and arrows
\begin{equation*}
  \alpha\to \beta
  \quad\Longleftrightarrow\quad
  u^{-1}(\alpha)\leq \beta.
\end{equation*}
We call $u$ \emph{rational} if $\nu_k(u)=\varnothing$ for all sufficiently large $k$.  The graph-theoretic characterization and the existence of a stable value are established in Lemma~\ref{lem:voloshyn-cycle}.
\end{definition}

The graph criterion and the reflection step are recalled in a form that is self-contained for the simply-laced root system considered here; compare \cite[Lemma~4.4]{Voloshyn2026}.

\begin{lemma}[Graph criterion]
\label{lem:voloshyn-cycle}
For $u\in W(D_r)$, the set $\nu_k(u)$ consists precisely of the vertices of $\Gamma_u$ from which a directed walk of length $k$ begins.  Consequently, the sequence $(\nu_k(u))_{k\geq0}$ is descending and stabilizes; denote its stable value by $\nu(u)$.  The following conditions are equivalent: $u$ is rational; $\Gamma_u$ has no directed cycle; and $\nu(u)=\varnothing$.  In particular, if $\alpha\in\nu_0(u)$ and $u^{-1}(\alpha)\leq\alpha$, then $u$ is not rational.
\end{lemma}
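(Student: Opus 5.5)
We prove the walk description of $\nu_k(u)$ by induction on $k$; the remaining assertions then follow from finiteness of $\nu_0(u)$. Throughout, we read an arrow $\alpha\to\beta$ of $\Gamma_u$ as $u^{-1}(\alpha)\in\Adj(\{\beta\})$, i.e.\ $\alpha\in u(\Adj\{\beta\})$.

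\emph{Base case.} For $k=0$ the claim holds trivially: every vertex of $\Gamma_u$ starts a walk of length $0$, and the vertex set is $\nu_0(u)$.

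\emph{Inductive step.} Assume $\nu_{k-1}(u)$ is the set of vertices of $\Gamma_u$ starting a walk of length $k-1$. Unwinding the definitions,
\begin{equation*}
  \alpha\in\nu_k(u)
  \iff
  \alpha\in\Pi_+ \text{ and } u^{-1}(\alpha)\leq\beta \text{ for some }\beta\in\nu_{k-1}(u).
\end{equation*}
There is one point to check. Arrows of $\Gamma_u$ require their tail $\alpha$ to lie in $\nu_0(u)$, i.e.\ also $u^{-1}(\alpha)\in\Pi_+$. This holds here because $u^{-1}(\alpha)\leq\beta$ places $u^{-1}(\alpha)$ in $\Adj(\{\beta\})\subseteq\Pi_+$. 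Hence $\alpha\in\nu_0(u)$, and the condition above says exactly that $\alpha\to\beta$ for some $\beta\in\nu_{k-1}(u)$. By the induction hypothesis $\beta$ starts a walk of length $k-1$, so $\alpha$ starts one of length $k$. Conversely, the first step $\alpha\to\beta$ of a walk of length $k$ puts $\beta\in\nu_{k-1}(u)$ and hence $\alpha\in\nu_k(u)$. This completes the induction.

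\emph{Consequences.} A walk of length $k$ contains one of length $k-1$ as a prefix, so the sequence is descending. It is a descending sequence of subsets of the finite set $\nu_0(u)$, so it stabilizes; write $\nu(u)$ for the stable value. Now use finiteness of $\Gamma_u$:
\begin{itemize}
\item If $\Gamma_u$ has a directed cycle, each vertex on it starts walks of every length, so $\nu(u)\neq\varnothing$.
\item If $\Gamma_u$ has no directed cycle, every walk has length at most $|\nu_0(u)|-1$, since a longer walk would repeat a vertex. Then $\nu_k(u)=\varnothing$ for $k\geq|\nu_0(u)|$.
\end{itemize}
Thus $u$ is rational $\iff$ $\nu(u)=\varnothing$ $\iff$ $\Gamma_u$ is acyclic.

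\emph{Final assertion.} If $\alpha\in\nu_0(u)$ and $u^{-1}(\alpha)\leq\alpha$, then $\alpha\to\alpha$ is a loop, which is a directed cycle, so $u$ is not rational.

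The only step needing care is the tail-membership check in the inductive step; the rest is direct bookkeeping from the definitions.
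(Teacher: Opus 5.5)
Your proof is correct and follows the paper's argument: induction on $k$ for the walk description of $\nu_k(u)$, then finiteness of $\nu_0(u)$ for the stabilization and the cycle equivalence, with the loop as a cycle of length one. The only difference is that you explicitly check that the tail $\alpha$ lies in $\nu_0(u)$, because $\Adj$ consists of positive roots; the paper leaves this step implicit.
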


\begin{proof}
The assertion about directed walks follows by induction on $k$.  It is tautological for $k=0$.  For $k\geq0$, a vertex $\alpha$ belongs to $\nu_{k+1}(u)$ exactly when $u^{-1}(\alpha)\leq\beta$ for some $\beta\in\nu_k(u)$; equivalently, there is an arrow $\alpha\to\beta$ followed by a directed walk of length $k$.  Thus $\nu_{k+1}(u)\subseteq\nu_k(u)$.

Since $\nu_0(u)$ is finite, the descending sequence stabilizes.  If $\Gamma_u$ is acyclic, every directed walk has length at most $\#\nu_0(u)-1$, so $\nu_k(u)=\varnothing$ for all sufficiently large $k$.  Conversely, a directed cycle can be traversed arbitrarily many times, and therefore $\nu_k(u)\neq\varnothing$ for every $k$.  This proves the equivalences.  The final assertion is the special case of a directed cycle of length one.
\end{proof}

For a root $\rho=\sum_{\delta\in\Delta}c_\delta(\rho)\delta$, write $c_\alpha(\rho)$ for the coefficient of a fixed simple root $\alpha$.

\begin{lemma}[Root subtraction in simply-laced type]
\label{lem:root-subtraction}
Let $\alpha\in\Delta$ and let $\rho,\sigma\in\Pi_+$ satisfy $\rho\leq\sigma$, $c_\alpha(\rho)=c_\alpha(\sigma)$, $\sigma-\alpha\in\Pi_+$, and $\rho\neq\alpha$.  Then $\rho-\alpha\in\Pi_+$.
\end{lemma}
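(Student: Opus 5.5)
We use the standard fact that in a simply-laced root system, for roots $\rho\neq\pm\alpha$, the pairing $\langle\rho,\alpha\rangle=(\rho,\alpha)$ with roots normalised to length $\sqrt2$ lies in $\{-1,0,1\}$. Moreover, $\rho-\alpha$ is a root exactly when $(\rho,\alpha)=1$. In the model above this is a direct check: the inner product of two distinct roots $\pm e_i\pm e_j$ with $\rho\neq\pm\alpha$ lies in $\{-1,0,1\}$, and $\rho-\alpha$ is a root iff the two share exactly one index with the same sign, i.e.\ $(\rho,\alpha)=1$. So it suffices to show $(\rho,\alpha)=1$; positivity of $\rho-\alpha$ will then come for free.

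\textbf{Step 1: compare the two inner products.} Since $\sigma-\alpha\in\Pi_+$, we have $(\sigma,\alpha)=1$. Write $\sigma-\rho=\sum_{\delta\in\Delta}n_\delta\delta$ with $n_\delta\geq0$, which is possible because $\rho\leq\sigma$. The hypothesis $c_\alpha(\rho)=c_\alpha(\sigma)$ gives $n_\alpha=0$. For $\delta\neq\alpha$ simple, $(\delta,\alpha)\leq0$. Hence
\begin{equation*}
  (\sigma-\rho,\alpha)=\sum_{\delta\neq\alpha}n_\delta(\delta,\alpha)\leq0,
\end{equation*}
so $(\rho,\alpha)\geq(\sigma,\alpha)=1$.

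\textbf{Step 2: exclude $\rho=\pm\alpha$.} Since $\rho\in\Pi_+$, the case $\rho=-\alpha$ is impossible, and $\rho\neq\alpha$ is a hypothesis. Therefore $(\rho,\alpha)\leq1$, and combined with Step 1 we get $(\rho,\alpha)=1$. It follows that $s_\alpha(\rho)=\rho-\alpha$ is a root.

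\textbf{Step 3: positivity.} The root $\rho-\alpha$ is either positive or negative. Because $\rho\neq\alpha$ is positive, $\rho$ has a positive coefficient at some simple root $\delta\neq\alpha$. That coefficient is unchanged in $\rho-\alpha$, so $\rho-\alpha$ cannot be negative. Alternatively, $s_\alpha$ permutes $\Pi_+\setminus\{\alpha\}$.

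The only delicate point is the sign inequality in Step 1. It relies on $n_\alpha=0$, so that the $(\alpha,\alpha)=2$ term does not appear. This is exactly where the hypothesis on equal $\alpha$-coefficients is used.
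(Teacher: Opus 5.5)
Your proof is correct and follows the paper's argument. Both use $n_\alpha=0$ together with $(\delta,\alpha)\leq 0$ for simple $\delta\neq\alpha$ to get $\langle\rho,\alpha^\vee\rangle\geq\langle\sigma,\alpha^\vee\rangle=1$, cap this at $1$ using $\rho\neq\pm\alpha$, and conclude that $s_\alpha\rho=\rho-\alpha$ is a root. The only cosmetic difference is in the positivity step: the paper uses $c_\alpha(\rho)=c_\alpha(\sigma)\geq 1$ (which follows from $\sigma-\alpha\in\Pi_+$), whereas you use a nonzero coefficient of $\rho$ at some simple root other than $\alpha$, or the fact that $s_\alpha$ permutes $\Pi_+\setminus\{\alpha\}$; either works.
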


\begin{proof}
Since $c_\alpha(\rho)=c_\alpha(\sigma)$, the difference $\sigma-\rho$ is a non-negative integral combination of simple roots distinct from $\alpha$.  In a simply-laced root system, $\langle\delta,\alpha^\vee\rangle\leq0$ for every simple root $\delta\neq\alpha$; hence $\langle\rho,\alpha^\vee\rangle\geq\langle\sigma,\alpha^\vee\rangle$.  All roots have the same length.  Hence $\lVert\sigma-\alpha\rVert^2=\lVert\sigma\rVert^2$ gives $2(\sigma,\alpha)=\lVert\alpha\rVert^2$, and therefore $\langle\sigma,\alpha^\vee\rangle=1$.  Since $\rho\neq\alpha$ and the root system is simply laced, one also has $\langle\rho,\alpha^\vee\rangle\leq1$.  Thus $\langle\rho,\alpha^\vee\rangle=1$, and $s_\alpha(\rho)=\rho-\alpha$ is a root.  Its simple-root coefficients are non-negative because $c_\alpha(\rho)=c_\alpha(\sigma)\geq1$.  Therefore $\rho-\alpha$ is positive.
\end{proof}

\begin{lemma}[Reflection preservation]
\label{lem:reflection-preservation}
Let $u\in W(D_r)$ be rational, and let $\alpha\in\Delta$ satisfy $u^{-1}(\alpha)>0$ and $u(\alpha)<0$.  Then $s_\alpha u$ is rational.
\end{lemma}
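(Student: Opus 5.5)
The plan is to argue by contraposition: a directed cycle in $\Gamma_{v}$, where $v=s_\alpha u$, will be transported to a directed cycle in $\Gamma_u$. Lemma~\ref{lem:voloshyn-cycle} then finishes the proof.

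\emph{Vertices.} Since $u^{-1}(\alpha)>0$, we have $\alpha\in u(\Pi_+)$, so $-\alpha=s_\alpha(\alpha)\in v(\Pi_+)$; hence $\alpha\notin\nu_0(v)$. Now take $\gamma\in\nu_0(v)$. Then $\gamma\neq\alpha$, so $s_\alpha\gamma$ is positive, because $s_\alpha$ permutes $\Pi_+\setminus\{\alpha\}$. Also $s_\alpha\gamma\in s_\alpha v(\Pi_+)=u(\Pi_+)$. This gives an injective map
\[
\varphi:\nu_0(v)\to\nu_0(u),\qquad \varphi(\gamma)=s_\alpha\gamma .
\]
We record also that $u^{-1}(\varphi(\gamma))=v^{-1}(\gamma)$.

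\emph{Arrows.} Suppose $\gamma\to\delta$ in $\Gamma_v$, that is, $\rho:=v^{-1}(\gamma)=u^{-1}(s_\alpha\gamma)\leq\delta$. We want $\rho\leq s_\alpha\delta$, which is exactly the arrow $\varphi(\gamma)\to\varphi(\delta)$ in $\Gamma_u$. Since $\delta\neq\alpha$ and the system is simply laced, $\langle\delta,\alpha^\vee\rangle\in\{-1,0,1\}$.
\begin{itemize}
\item If $\langle\delta,\alpha^\vee\rangle\leq0$, then $s_\alpha\delta\geq\delta\geq\rho$, and there is nothing to prove.
\item If $\langle\delta,\alpha^\vee\rangle=1$, then $s_\alpha\delta=\delta-\alpha\in\Pi_+$, and we must show $\rho\leq\delta-\alpha$.
\end{itemize}
First observe that $\rho\neq\alpha$: otherwise $s_\alpha\gamma=u(\alpha)<0$, contradicting positivity of $s_\alpha\gamma$. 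This is where the hypothesis $u(\alpha)<0$ enters.

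\emph{The case $\langle\delta,\alpha^\vee\rangle=1$.}
\begin{itemize}
\item If $c_\alpha(\rho)=c_\alpha(\delta)$, Lemma~\ref{lem:root-subtraction} with $\sigma=\delta$ gives $\rho-\alpha\in\Pi_+$. Hence $\rho\geq\rho-\alpha$, and $\delta-\alpha-(\rho-\alpha)=\delta-\rho\geq0$. So the needed inequality is $\delta-\alpha\geq\rho$ itself; we have shown $\delta-\alpha\geq\rho-\alpha$, which is weaker. This direct route is therefore not enough, and the chain argument below is used instead, uniformly in both subcases.
\item If $c_\alpha(\rho)<c_\alpha(\delta)$, the chain argument below applies directly.
\end{itemize}

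\emph{Chain argument (main step).} Choose a saturated chain $\rho=\rho_0<\rho_1<\cdots<\rho_m=\delta$ in the root poset, each step adding a simple root; this is standard for root posets. Let $j$ be the last index at which the added simple root is $\alpha$, so $\rho_j=\rho_{j-1}+\alpha$ and $c_\alpha(\rho_j)=c_\alpha(\delta)$. If no such $j$ exists, then $c_\alpha(\rho)=c_\alpha(\delta)$, and we apply Lemma~\ref{lem:root-subtraction} to the pair $(\rho,\delta)$ differently: in simply-laced type, $\langle\rho,\alpha^\vee\rangle\geq\langle\delta,\alpha^\vee\rangle=1$ forces $\rho-\alpha\in\Pi_+$. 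The real problem in that subcase is then whether $\rho\leq\delta-\alpha$ can fail, and I expect to handle it by showing it cannot arise for a genuine arrow. Where $j$ exists, everything works cleanly: Lemma~\ref{lem:root-subtraction} is not even needed, because $\delta-\alpha-\rho_{j-1}=\delta-\rho_j\geq0$ gives $\rho\leq\rho_{j-1}\leq\delta-\alpha$.

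\emph{Main obstacle.} The equal-coefficient subcase $c_\alpha(\rho)=c_\alpha(\delta)$ with $\langle\delta,\alpha^\vee\rangle=1$ is what I expect to be hardest. If the arrow there cannot be mapped to $\varphi(\gamma)\to\varphi(\delta)$, I would try instead to reroute the cycle through the vertex $\delta$ itself or through $s_\alpha\delta$, and use Lemma~\ref{lem:root-subtraction} to compare $u^{-1}$ of the next vertex with $\delta-\alpha$.

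Once every arrow of $\Gamma_v$ lifts to an arrow (or a walk) of $\Gamma_u$, any directed cycle in $\Gamma_v$ yields a directed cycle in $\Gamma_u$, by injectivity of $\varphi$. This contradicts the rationality of $u$ via Lemma~\ref{lem:voloshyn-cycle}.
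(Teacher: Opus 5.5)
Your set-up is correct. The map $\varphi=s_\alpha$ sends $\nu_0(s_\alpha u)$ injectively into $\nu_0(u)$, you have $u^{-1}(\varphi(\gamma))=v^{-1}(\gamma)$ and $\rho\neq\alpha$, and arrows lift in the cases $\langle\delta,\alpha^\vee\rangle\leq0$ and $c_\alpha(\rho)<c_\alpha(\delta)$. The latter case needs no saturated chain: $\delta-\rho\geq0$ with positive $\alpha$-coefficient already gives $\delta-\alpha-\rho\geq0$.

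The gap is the remaining case $\langle\delta,\alpha^\vee\rangle=1$, $c_\alpha(\rho)=c_\alpha(\delta)$. This is where the whole content of the lemma lies, and your plan for it cannot succeed. There the inequality $\rho\leq\delta-\alpha=\varphi(\delta)$ is not merely unproved but always false, because $c_\alpha(\delta-\alpha)=c_\alpha(\rho)-1$. So the arrow $\varphi(\gamma)\to\varphi(\delta)$ is never in $\Gamma_u$, and $\varphi$ is not a graph map. Such arrows genuinely occur. In $W(D_r)$ let $u$ be the permutation $e_1\mapsto e_3$, $e_2\mapsto e_1$, $e_3\mapsto e_2$ fixing the other $e_j$, and take $\alpha=\alpha_1$. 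Then $u^{-1}(\alpha)=e_2-e_3>0$ and $u(\alpha)=e_3-e_1<0$. The graph $\Gamma_{s_1u}$ contains the arrow $e_3-e_4\to e_1-e_4$ with $\rho=\delta=e_1-e_4$, while $u^{-1}(e_3-e_4)=e_1-e_4\nleq e_2-e_4=\varphi(\delta)$. This $u$ is not rational, but it shows that excluding the case would need an essential use of the rationality of $u$, which you do not supply. Rerouting through $\delta$ also fails as stated. It is true that $\delta\in\nu_0(u)$ and $\varphi(\gamma)\to\delta$, but $u^{-1}(\delta)=v^{-1}(\delta)+u^{-1}(\alpha)$, so the difficulty just moves to the arrow leaving $\delta$.

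The missing idea, which is the paper's, is to replace the \emph{source} $\varphi(\gamma)$ by a vertex of $\Gamma_u$ outside the image of $\varphi$. You already have $\eta:=\rho-\alpha\in\Pi_+$ from Lemma~\ref{lem:root-subtraction}, and $\eta\leq\delta-\alpha$.
\begin{itemize}
\item Set $x:=u(\eta)=\varphi(\gamma)-u(\alpha)$. Since $u(\alpha)<0$ (the second and essential use of that hypothesis), $x$ is a positive root strictly above $\varphi(\gamma)$. Also $u^{-1}(x)=\eta>0$, so $x\in\nu_0(u)$.
\item Then $u^{-1}(x)=\eta\leq\varphi(\delta)$ gives the arrow $x\to\varphi(\delta)$.
\item Every vertex with an arrow into $\varphi(\gamma)$ also has one into $x$. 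In particular, if the preceding arrow of the cycle is also non-lifting, with replacement vertex $x'$, then $u^{-1}(x')\leq\varphi(\gamma)<x$.
\end{itemize}
Substituting $x$ for $\varphi(\gamma)$ at the source of every non-lifting arrow turns the cycle of $\Gamma_{s_\alpha u}$ into a closed directed walk of $\Gamma_u$, contradicting rationality. Without this substitution step your argument does not prove the lemma.
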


\begin{proof}
Set $w=s_\alpha u$ and suppose, for a contradiction, that $\Gamma_w$ contains a directed cycle.  Since $u^{-1}(\alpha)>0$, the root $\alpha$ belongs to $\nu_0(u)$, and left multiplication by $s_\alpha$ removes precisely this root from the positive image set.  More explicitly, $s_\alpha$ preserves $\Pi_+\setminus\{\alpha\}$ and $\Pi_-\setminus\{-\alpha\}$, while $u(\Pi_+)$ contains $\alpha$ but not $-\alpha$; hence $\nu_0(w)=s_\alpha(\nu_0(u)\setminus\{\alpha\})$.

Index the assumed cycle by $\mathbb Z/m\mathbb Z$ so that its vertices are $\theta_i=s_\alpha\gamma_i$, where $\gamma_i\in\nu_0(u)\setminus\{\alpha\}$, and its arrows are equivalent to
\begin{equation}
  u^{-1}(\gamma_{i-1})\leq s_\alpha\gamma_i
  \qquad(i\in\mathbb Z/m\mathbb Z).
  \label{eq:reflection-cycle-ineq}
\end{equation}
Put $I_-:=\{i:u^{-1}(\gamma_{i-1})\leq\gamma_i\}$ and $I_+:=(\mathbb Z/m\mathbb Z)\setminus I_-$.  For $i\in I_-$ there is an arrow $\gamma_{i-1}\to\gamma_i$ in $\Gamma_u$.

Fix $i\in I_+$.  Because $\gamma_i\neq\alpha$ and the root system is simply laced, $s_\alpha\gamma_i$ is one of $\gamma_i-\alpha$, $\gamma_i$, or $\gamma_i+\alpha$.  The first two alternatives, together with \eqref{eq:reflection-cycle-ineq}, would imply $u^{-1}(\gamma_{i-1})\leq\gamma_i$, contrary to $i\in I_+$.  Thus $s_\alpha\gamma_i=\gamma_i+\alpha$.  Write
\begin{equation}
  \gamma_i+\alpha-u^{-1}(\gamma_{i-1})=\lambda_i,
  \qquad \lambda_i\in\sum_{\delta\in\Delta}\mathbb Z_{\geq0}\delta.
  \label{eq:lambda-i}
\end{equation}
Since $u^{-1}(\gamma_{i-1})\nleq\gamma_i$, the coefficient of $\alpha$ in $\lambda_i$ is zero; otherwise $\lambda_i-\alpha$ would still be a non-negative combination of simple roots.  Hence $\lambda_i\in\sum_{\delta\neq\alpha}\mathbb Z_{\geq0}\delta$.

Set $\eta_i=\gamma_i-\lambda_i$.  Equation \eqref{eq:lambda-i} gives $u^{-1}(\gamma_{i-1})=\eta_i+\alpha$.  Apply Lemma~\ref{lem:root-subtraction} with $\rho=u^{-1}(\gamma_{i-1})$ and $\sigma=\gamma_i+\alpha$.  Its hypotheses hold: the two roots have the same $\alpha$-coefficient, $\rho\leq\sigma$ by \eqref{eq:reflection-cycle-ineq}, $\sigma-\alpha=\gamma_i$, and $\rho\neq\alpha$ because otherwise $\gamma_{i-1}=u(\alpha)<0$.  Therefore $\eta_i\in\Pi_+$.  Define $x_i=u(\eta_i)$.  Applying $u$ to $u^{-1}(\gamma_{i-1})=\eta_i+\alpha$ yields $x_i=\gamma_{i-1}-u(\alpha)>\gamma_{i-1}$ in the root order.  In particular, $x_i\in\Pi_+$ and hence $x_i\in\nu_0(u)$.  Moreover, $u^{-1}(x_i)=\eta_i\leq\gamma_i$, so $x_i\to\gamma_i$ is an arrow of $\Gamma_u$.

We now obtain a directed cycle in $\Gamma_u$.  If $I_+=\varnothing$, the arrows $\gamma_{i-1}\to\gamma_i$ already form one.  If $I_-=\varnothing$, then for every $i$ one has $u^{-1}(x_i)=\eta_i\leq\gamma_i<x_{i+1}$, so the arrows $x_i\to x_{i+1}$ form a closed directed walk and hence contain a directed cycle.  Finally, suppose that both sets are non-empty.  For each maximal cyclic block $a+1,\ldots,b$ contained in $I_+$ and preceded by $a\in I_-$, the inequalities above give the directed walk
\[
  \gamma_{a-1}\longrightarrow x_{a+1}\longrightarrow x_{a+2}
  \longrightarrow\cdots\longrightarrow x_b\longrightarrow\gamma_b.
\]
Indeed, the first arrow follows from $u^{-1}(\gamma_{a-1})\leq\gamma_a<x_{a+1}$, the intermediate arrows from $u^{-1}(x_j)=\eta_j\leq\gamma_j<x_{j+1}$, and the final arrow from $u^{-1}(x_b)=\eta_b\leq\gamma_b$.  For every $i\in I_-$ with $i+1\in I_-$, retain the direct arrow $\gamma_{i-1}\to\gamma_i$.  For an index $a\in I_-$ immediately preceding a block of $I_+$, replace that direct arrow and the missing transitions across the block by the displayed walk.  These pieces have matching endpoints and concatenate around the cyclic index set to a closed directed walk.  Every closed directed walk contains a directed cycle, contradicting the rationality of $u$.
\end{proof}

\begin{lemma}[Descent]
\label{lem:voloshyn-descent}
Let $u\in W(D_r)$ be rational and $u\neq w_0$.  Then there is a simple root $\alpha_a$ such that $u^{-1}(\alpha_a)>0$ and $u(\alpha_a)<0$, the element $s_a u$ is rational, and $\ell(s_a u)=\ell(u)+1$.
\end{lemma}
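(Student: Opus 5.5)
The proof splits into an \emph{existence} step and a \emph{consequence} step. Once a simple root $\alpha=\alpha_a$ with $u^{-1}(\alpha)>0$ and $u(\alpha)<0$ is found, Lemma~\ref{lem:reflection-preservation} gives that $s_a u$ is rational. The length statement then follows from the standard criterion: $u^{-1}(\alpha_a)>0$ is exactly the condition that $s_a$ is not a left descent of $u$, so $\ell(s_a u)=\ell(u)+1$ \cite[Section~1.7]{Humphreys1990}. So everything reduces to producing such an $\alpha$.

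For existence, I would work with $S=\nu_0(u)\cap\Delta$, the set of simple roots in $u(\Pi_+)$. Since $u\neq w_0$, some simple root $\alpha$ has $u^{-1}(\alpha)>0$, so $S\neq\varnothing$. Suppose, for a contradiction, that $u(\alpha)>0$ for every $\alpha\in S$. Then for each $\alpha\in S$ the root $u(\alpha)$ lies in $\nu_0(u)$. Moreover $u^{-1}(u(\alpha))=\alpha\leq\alpha$, so $\Gamma_u$ contains the arrow $u(\alpha)\to\alpha$. More generally, $u(\alpha)\to\beta$ for every $\beta\in\nu_0(u)$ whose support contains $\alpha$.

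Next comes the terminal-layer argument. Since $u$ is rational, $\Gamma_u$ is acyclic by Lemma~\ref{lem:voloshyn-cycle}. Let $m\geq 0$ be the maximal length of a directed walk, and assign to each vertex its depth, the length of the longest walk starting there. I would look at a vertex $\gamma$ of depth $0$ (a sink) and put $\rho=u^{-1}(\gamma)\in\Pi_+$.

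If $\rho$ is simple, then $\rho\in u^{-1}(\Pi_+)$, and I aim to show that $\rho\in S$ or to derive an arrow out of $\gamma$. If $\rho$ is not simple, I would pick a simple $\delta$ with $\rho-\delta\in\Pi_+$ and apply $u$. Depending on the sign of $u(\delta)$, either $\delta$ (or $u(\delta)$) gives a vertex to which $\gamma$ has an arrow, contradicting that $\gamma$ is a sink, or we pass to the shorter root $u(\rho-\delta)$ with the sink property preserved. Lemma~\ref{lem:root-subtraction} is the tool that keeps the subtracted roots positive along the way. Iterating this decreases height, so it terminates at a simple root $\alpha\in S$ whose image under $u$ must be negative, contradicting the assumption. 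The degenerate case $m=0$, where every vertex is a sink, must be checked separately: there $\nu_0(u)$ has no arrows at all, so the arrow $u(\alpha)\to\alpha$ exhibited above is already impossible for any $\alpha\in S$. Hence some $\alpha\in S$ has $u(\alpha)<0$.

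The main obstacle is the middle of the terminal-layer argument: controlling the signs of $u$ on the simple roots in the support of $\rho$, so that the height-reduction really produces an arrow out of the sink or an element of $S$ with negative image. I would first try an induction on the height of $\rho$. The key simply-laced input is that $\langle\rho,\delta^\vee\rangle=1$ for a suitable simple $\delta$ in the support of $\rho$, as in the proof of Lemma~\ref{lem:root-subtraction}.
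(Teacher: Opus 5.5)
Your reduction (find $\alpha\in S$ with $u(\alpha)<0$, then invoke Lemma~\ref{lem:reflection-preservation} and the length criterion) and your treatment of $m=0$ are fine. The existence step for $m\geq1$, however, has a genuine gap: you run the terminal-layer argument at the wrong end of the walks. At a sink $\gamma$, the arrows produced by the hypothesis ``$u(\alpha)>0$ for all $\alpha\in S$'' have the form $u(\alpha)\to\gamma$. They point \emph{into} $\gamma$, so they can never contradict the sink property.

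The height reduction does not rescue this. For non-simple $\rho=u^{-1}(\gamma)$, an arrow $\gamma\to\delta$ would need $\rho\leq\delta$, which is impossible. Nothing gives $\rho\leq u(\delta)$ either. Moreover, the sink property is not inherited by $u(\rho-\delta)$. Its preimage $\rho-\delta$ is smaller than $\rho$, which makes outgoing arrows easier to obtain, not harder. Lemma~\ref{lem:root-subtraction} does not address any of this.

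The missing idea is to look at the support of the vertex itself, not of its preimage. In other words, control the signs of $u^{-1}$ on the simple roots below $\gamma$, not the signs of $u$ below $\rho$. Take any $\beta=\sum_b c_b\alpha_b\in\nu_0(u)$. Positivity of $u^{-1}(\beta)=\sum_b c_b u^{-1}(\alpha_b)$ forces some $a$ with $c_a>0$ and $u^{-1}(\alpha_a)>0$. Hence $\alpha_a\in S$ and $\alpha_a\leq\beta$. If $u(\alpha_a)>0$, then the arrow $u(\alpha_a)\to\beta$ exists; you already noted this arrow in your first paragraph.

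The paper applies this to $\beta\in\nu_m(u)$ with $m$ maximal, i.e.\ to the \emph{start} of a longest walk. The arrow would then extend that walk to length $m+1$, which is impossible, so $u(\alpha_a)<0$. This works uniformly in $m$, including $m=0$. Equivalently, under your contradiction hypothesis every vertex of $\Gamma_u$ would have an in-neighbour, which forces a directed cycle in the finite non-empty graph $\Gamma_u$.
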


\begin{proof}
If $\nu_0(u)=\varnothing$, then $u$ sends every positive root to a negative root.  Thus $\ell(u)=\#\Pi_+=\ell(w_0)$, and the uniqueness of the longest element gives $u=w_0$, contrary to the hypothesis.  Hence $\nu_0(u)\neq\varnothing$.  Since $u$ is rational and the sequence $\nu_k(u)$ is descending, there is a largest integer $m\geq0$ for which $\nu_m(u)\neq\varnothing$; then $\nu_{m+1}(u)=\varnothing$.  The value $m=0$ occurs when $\Gamma_u$ has vertices but no arrows.

Choose $\beta\in\nu_m(u)$.  Because $\nu_m(u)\subseteq\nu_0(u)$, the root $u^{-1}(\beta)$ is positive.  Write $\beta=\sum_b c_b\alpha_b$ with $c_b\in\mathbb Z_{\geq0}$.  If $u^{-1}(\alpha_b)$ were negative for every $b$ with $c_b>0$, then $u^{-1}(\beta)=\sum_b c_bu^{-1}(\alpha_b)$ would be a nonzero element of the negative simple-root cone, contradicting its positivity.  Hence some $a$ satisfies $c_a>0$ and $u^{-1}(\alpha_a)>0$.

The inequality $c_a>0$ gives $\alpha_a\leq\beta$, so $\alpha_a\in\Adj(\nu_m(u))$.  If $u(\alpha_a)$ were positive, it would belong to $u(\Adj(\nu_m(u)))\cap\Pi_+=\nu_{m+1}(u)$, which is empty.  Thus $u(\alpha_a)<0$, and Lemma~\ref{lem:reflection-preservation} shows that $s_a u$ is rational.  Finally, the standard Coxeter length criterion gives $\ell(s_a u)=\ell(u)+1$ because $u^{-1}(\alpha_a)>0$; see \cite[Section~1.7]{Humphreys1990} or \cite[Chapter~4]{BjornerBrenti2005}.
\end{proof}

We shall use repeatedly the equivalent length criterion
$\ell(s_a u)=\ell(u)+1 \quad\Longleftrightarrow\quad u^{-1}(\alpha_a)>0.$

\subsection{The signed cyclic family}
\label{sec:family}

Set $[r-1]=\{1,2,\ldots,r-1\}$.  For every non-empty $I=\{i_1<\cdots<i_k\}\subseteq [r-1]$, let $p_I$ be the cycle in~\eqref{eq:pI-intro}.  We shall use the following notation throughout:
$n_I(i_a)=i_{a+1}\quad(1\leq a<k), \qquad n_I(i_k)=r.$
Thus $n_I(a)$ is the next marked point of $I\cup\{r\}$ after $a\in I$.  The elements $c_I,d_I$ are as in~\eqref{eq:cI-intro}--\eqref{eq:dI-intro}.  We adopt the convention $c_\varnothing=d_\varnothing=w_0$.  For $I\neq\varnothing$, this convention does not alter the definitions of the signs; it serves only to abbreviate the graph formulas.

\begin{lemma}
\label{lem:family-in-W}
For every non-empty $I\subseteq [r-1]$, both $c_I$ and $d_I$ belong to $W(D_r)$.
\end{lemma}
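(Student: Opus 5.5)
The plan is to use the signed-permutation model directly: an element of the hyperoctahedral group lies in $W(D_r)$ exactly when it is a signed permutation with an even number of sign changes. So for each of $c_I$ and $d_I$ two things must be checked. First, the underlying map on indices is a permutation of $\{1,\ldots,r\}$. Second, the number of indices $j$ sent to $-e_{(\cdot)}$ is even.

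For the underlying permutation, both $c_I$ and $d_I$ send $e_j$ to $\pm e_{p_I(j)}$ for every $j$. This holds for $1\leq j\leq r-1$ with $j\neq i_k$ by definition. It also holds at $j=i_k$, where $p_I(i_k)=r$; in $d_I$ this is the case $d_I(e_{i_k})=e_r$. Finally it holds at $j=r$, where $p_I(r)=i_1$. Since $p_I$ is the cycle $(i_1\ \cdots\ i_k\ r)$, it is a bijection, and both maps are signed permutations with underlying permutation $p_I$.

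Next I will count the negative signs. For $c_I$, the indices $1,\ldots,r-1$ all carry a minus sign and $r$ carries a plus sign, giving $r-1$ sign changes. For $d_I$, the indices in $\{1,\ldots,r-1\}\setminus\{i_k\}$ contribute $r-2$ minus signs, $i_k$ contributes a plus sign, and $r$ contributes one minus sign, giving $r-1$ in total. Since $r$ is odd, $r-1$ is even, so both elements lie in $W(D_r)$.

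There is no real obstacle. The only point needing care is the bookkeeping at $j=i_k$ and $j=r$, where the defining formulas switch cases. Optionally, one can cross-check the count by noting that $d_I$ differs from $c_I$ by flipping the signs at exactly two positions: the sign at index $i_k$ goes from minus to plus, and the sign at index $r$ goes from plus to minus. This changes the number of sign changes by $0$.
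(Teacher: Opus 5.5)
Your proposal is correct and follows essentially the same argument as the paper. Both observe that the underlying permutation is $p_I$ and that each of $c_I$ and $d_I$ has exactly $r-1$ negative signs, which is even because $r$ is odd.
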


\begin{proof}
The underlying unsigned permutation is $p_I$ in both cases.  Since $r$ is odd, $r-1$ is even.  The element $c_I$ has negative signs at precisely the positions $1,\ldots,r-1$, and therefore has an even number of sign changes.  The element $d_I$ has positive signs at $i_k$ and negative signs at all positions in $\{1,\ldots,r\}\setminus\{i_k\}$; again the number of negative signs is $r-1$, which is even.  Hence both signed permutations lie in the even signed permutation group $W(D_r)$.
\end{proof}

\begin{lemma}[Faithfulness of the parametrization]
\label{lem:faithful-parametrization}
For non-empty subsets $I,J\subseteq[r-1]$, one has $p_I=p_J$ if and only if $I=J$.  Consequently, $c_I=c_J$ if and only if $I=J$, $d_I=d_J$ if and only if $I=J$, and $c_I\neq d_J$.  Moreover, none of these elements equals $w_0$.  Hence the family $\{w_0\}\cup\{c_I,d_I:\varnothing\neq I\subseteq[r-1]\}$ contains exactly $1+2(2^{r-1}-1)$ elements, and its $c$- and $d$-parts intersect only at the common convention $c_\varnothing=d_\varnothing=w_0$.
\end{lemma}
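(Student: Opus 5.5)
The plan is to recover $I$ from the underlying unsigned permutation, and then to separate the $c$-elements, the $d$-elements and $w_0$ by their sign patterns.

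\emph{Step 1: $p_I$ determines $I$.} By \eqref{eq:pI-intro}, the support of $p_I$ (the set of points it moves) is exactly $I\cup\{r\}$. This holds even when $k=1$, since then $p_I$ is the transposition $(i_1\ r)$, which moves both points. Hence $I=\operatorname{supp}(p_I)\setminus\{r\}$, so $p_I=p_J$ forces $I=J$. The converse is trivial.

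\emph{Step 2: injectivity within each part.} A signed permutation $w$ sends each $e_j$ to $\pm e_{\pi(j)}$ for a unique unsigned permutation $\pi$. By \eqref{eq:cI-intro}--\eqref{eq:dI-intro}, the unsigned permutation underlying both $c_I$ and $d_I$ is $p_I$; note that $d_I(e_{i_k})=e_r=e_{p_I(i_k)}$. Therefore $c_I=c_J$ gives $p_I=p_J$, and Step 1 yields $I=J$. The same argument applies to $d_I=d_J$.

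\emph{Step 3: $c_I\neq d_J$.} If $c_I=d_J$, comparing underlying permutations gives $I=J$. The two elements still differ in sign at one position: $c_I(e_{i_k})=-e_{p_I(i_k)}=-e_r$, whereas $d_I(e_{i_k})=+e_r$. Equivalently, one can compare at $e_r$, where $c_I(e_r)=e_{i_1}$ and $d_I(e_r)=-e_{i_1}$.

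\emph{Step 4: neither element is $w_0$, and the count.} The underlying permutation of $w_0$ is the identity. For nonempty $I$, the permutation $p_I$ moves $r$ to $i_1\neq r$, so it is not the identity. This rules out $c_I=w_0$ and $d_I=w_0$. Consequently the elements $c_I$, $d_I$ with $I$ ranging over the $2^{r-1}-1$ nonempty subsets of $[r-1]$ are pairwise distinct and all differ from $w_0$. The family therefore has exactly $1+2(2^{r-1}-1)$ elements. Under the convention $c_\varnothing=d_\varnothing=w_0$, the $c$- and $d$-parts meet only at $w_0$.

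No step presents a real obstacle. The only point that needs care is the edge case $k=1$ in Step 1, namely checking that $r$ genuinely lies in the support of $p_I$ there, and this is immediate from $p_I(r)=i_1\neq r$.
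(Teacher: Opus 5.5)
Your proposal is correct and follows essentially the paper's route. You recover $I$ from the non-trivial orbit (equivalently, the support) $I\cup\{r\}$ of $p_I$, use that $c_I$ and $d_I$ both have underlying permutation $p_I$, separate the two parts by the sign at $e_r$, and separate both from $w_0$ because $p_I\neq\mathrm{id}$; the only cosmetic difference is that the paper compares $c_I(e_r)=e_{\min I}$ with $d_J(e_r)=-e_{\min J}$ directly, without first reducing to $I=J$.
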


\begin{proof}
The unique non-trivial orbit of $p_I$ is $I\cup\{r\}$.  Thus $p_I=p_J$ forces equality of these orbits and hence $I=J$; the converse is immediate.  Equality of two $c$-elements or two $d$-elements therefore forces equality of their underlying permutations and hence of their index sets.  On the other hand, $c_I(e_r)=e_{\min I}$ whereas $d_J(e_r)=-e_{\min J}$, so $c_I\neq d_J$.  Finally, $p_I$ is non-trivial for every non-empty $I$, while the unsigned permutation underlying $w_0$ is the identity.  This proves all assertions.
\end{proof}

Let $\tau$ be the diagram automorphism of the based root system defined by
$\tau(e_j)=e_j\quad(1\leq j\leq r-1), \qquad \tau(e_r)=-e_r.$
Then
$\tau(\alpha_a)=\alpha_a\quad(1\leq a\leq r-2), \qquad \tau(\alpha_{r-1})=\alpha_r, \qquad \tau(\alpha_r)=\alpha_{r-1}.$
Thus $\tau$ preserves $\Pi_+$ and the root order.  It normalizes $W(D_r)$ inside the full automorphism group of the root system, and
\begin{equation}
  d_I=\tau c_I\tau^{-1},
  \qquad
  \tau s_{r-1}\tau^{-1}=s_r,
  \qquad
  \tau s_a\tau^{-1}=s_a\quad(1\leq a\leq r-2).
  \label{eq:tau-conjugation}
\end{equation}
Consequently $u$ is rational if and only if $\tau u\tau^{-1}$ is rational, because
\begin{equation}
  \nu_0(\tau u\tau^{-1})=\tau(\nu_0(u)),
  \qquad
  (\tau u\tau^{-1})^{-1}(\tau\alpha)\leq \tau\beta
  \Longleftrightarrow
  u^{-1}(\alpha)\leq \beta.
  \label{eq:tau-rationality}
\end{equation}
Thus it is enough to prove most assertions for $c_I$; the corresponding assertions for $d_I$ follow by applying $\tau$.

\begin{example}
\label{ex:D5-family}
For $r=5$ the complete $c$-half of the family is
{\small
\begin{equation*}
\begin{gathered}
  c_\varnothing=(-1,-2,-3,-4,5),\\
  c_{\{1\}}=(-5,-2,-3,-4,1),\quad
  c_{\{2\}}=(-1,-5,-3,-4,2),\quad
  c_{\{3\}}=(-1,-2,-5,-4,3),\quad
  c_{\{4\}}=(-1,-2,-3,-5,4),\\
  c_{\{1,2\}}=(-2,-5,-3,-4,1),\quad
  c_{\{1,3\}}=(-3,-2,-5,-4,1),\quad
  c_{\{1,4\}}=(-4,-2,-3,-5,1),\\
  c_{\{2,3\}}=(-1,-3,-5,-4,2),\quad
  c_{\{2,4\}}=(-1,-4,-3,-5,2),\quad
  c_{\{3,4\}}=(-1,-2,-4,-5,3),\\
  c_{\{1,2,3\}}=(-2,-3,-5,-4,1),\quad
  c_{\{1,2,4\}}=(-2,-4,-3,-5,1),\quad
  c_{\{1,3,4\}}=(-3,-2,-4,-5,1),\\
  c_{\{2,3,4\}}=(-1,-3,-4,-5,2),\quad
  c_{\{1,2,3,4\}}=(-2,-3,-4,-5,1).
\end{gathered}
\end{equation*}
}
Here $(a_1,\ldots,a_5)$ means $e_j\mapsto \sgn(a_j)e_{|a_j|}$.  The $d$-half is obtained from $d_I=\tau c_I\tau^{-1}$.  If $I\neq\varnothing$, the entry with absolute value $5$ and the fifth entry occur in distinct positions, and both signs are reversed.  If $I=\varnothing$, these two operations act on the same entry and cancel, so $d_\varnothing=c_\varnothing=w_0$.
\end{example}

\section{Rationality and the internal arrow calculus}
\label{sec:acyclic}

We now prove that every element of $\calF_r$ is rational.  Since $w_0(\Pi_+)\subseteq\Pi_-$, one has $\nu_0(w_0)=\varnothing$.  The main point is an explicit description of $\nu_0(c_I)$.

Fix $I=\{i_1<\cdots<i_k\}\neq\varnothing$.  For $a\in I$ put $n=n_I(a)$.  Since there is no element of $I$ strictly between $a$ and $n$, one has $p_I(a)=n$, while $p_I(b)=b$ for $a<b<n$.  Define two sets of positive roots
\begin{align*}
  A_I&=\set{e_b-e_{n_I(a)}:a\in I,\; a<b<n_I(a)},\\
  B_I&=\set{e_{i_1}-e_q:i_1<q\leq r}.
\end{align*}
The roots in $A_I$ are the roots coming from ordinary inversions of $p_I$ on the negative part $\{1,\ldots,r-1\}$; the roots in $B_I$ are the roots created by the unique positive sign at $e_r$.

\begin{proposition}
\label{prop:nu0-cI}
For $I\neq\varnothing$,
\begin{equation}
  \nu_0(c_I)=A_I\sqcup B_I.
  \label{eq:nu0-cI}
\end{equation}
Moreover, for $a\in I$, $a<b<n_I(a)$,
\begin{equation}
  c_I^{-1}(e_b-e_{n_I(a)})=e_a-e_b,
  \label{eq:preimage-A}
\end{equation}
and, for $q>i_1$, if $t\in\{1,\ldots,r-1\}$ is the unique index with $p_I(t)=q$, then
\begin{equation}
  c_I^{-1}(e_{i_1}-e_q)=e_t+e_r.
  \label{eq:preimage-B}
\end{equation}
\end{proposition}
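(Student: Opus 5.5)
The plan is to compute $\nu_0(c_I)=c_I(\Pi_+)\cap\Pi_+$ directly. For each positive root $\gamma\in\Pi_+$, I will check whether $c_I(\gamma)$ is positive. Equivalently, I will find all positive $\rho$ with $c_I^{-1}(\rho)>0$ by running through the preimages. Write $c_I(e_j)=-e_{p_I(j)}$ for $j<r$ and $c_I(e_r)=e_{i_1}$, and split $\Pi_+$ into four types: $e_i-e_j$ and $e_i+e_j$ with $i<j<r$, and $e_i-e_r$ and $e_i+e_r$ with $i<r$.

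\emph{Roots not involving $e_r$.} For $i<j<r$ we have $c_I(e_i+e_j)=-(e_{p_I(i)}+e_{p_I(j)})$, which is negative. Likewise $c_I(e_i-e_j)=e_{p_I(j)}-e_{p_I(i)}$. This is positive exactly when $p_I(j)<p_I(i)$, i.e. when $(i,j)$ is an inversion of $p_I$ with $i<j<r$. I now list these inversions. $p_I$ fixes every point outside $I\cup\{r\}$ and is increasing along $i_1<\dots<i_k$, with $p_I(i_k)=r$. So an inversion with $i<j<r$ needs $i\in I$ and $j\notin I$. For $i=a\in I$, we have $p_I(a)=n_I(a)$, and $j$ must be a fixed point with $a<j<n_I(a)$. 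Indeed, $j\in I$ gives no inversion by monotonicity, and $j>n_I(a)$ gives $p_I(j)\ge j>n_I(a)$ whenever $j$ is fixed. Each such inversion contributes $e_j-e_{n_I(a)}$ with preimage $e_a-e_j$. This yields $A_I$ together with formula \eqref{eq:preimage-A}. The remaining case $i\notin I$, $j\in I$ gives $p_I(j)>j>i=p_I(i)$, so it produces no inversion.

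\emph{Roots involving $e_r$.} We have $c_I(e_t+e_r)=e_{i_1}-e_{p_I(t)}$ and $c_I(e_t-e_r)=-e_{p_I(t)}-e_{i_1}$. The second is always negative, since $p_I(t)\ne i_1$ for $t<r$ (as $p_I(r)=i_1$). The first is positive iff $p_I(t)>i_1$. As $t$ ranges over $\{1,\dots,r-1\}$, $p_I(t)$ ranges over $\{1,\dots,r\}\setminus\{i_1\}$. Hence these roots are exactly the $e_{i_1}-e_q$ with $q>i_1$, each occurring once, with preimage $e_t+e_r$ where $p_I(t)=q$. This gives $B_I$ and formula \eqref{eq:preimage-B}.

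\emph{Disjointness.} The two sets have different preimage types, and $c_I$ is a bijection, so the union is disjoint. Directly: an element of $A_I$ has first index $b\notin I$, whereas $i_1\in I$.

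The only step needing care is the inversion count in the first case, specifically ruling out pairs with $j>n_I(a)$ or with both indices in $I$. This follows from the monotonicity of $p_I$ on $I$ and on its complement. Everything else is a sign check.
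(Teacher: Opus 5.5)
Your proof is correct and takes essentially the same route as the paper. Both split the positive roots $e_i\pm e_j$ by type, check the sign of each image, enumerate the inversions of $p_I$ on $\{1,\ldots,r-1\}$, and obtain $B_I$ from the fact that $t\mapsto p_I(t)$ maps $\{1,\ldots,r-1\}$ onto $\{1,\ldots,r\}\setminus\{i_1\}$. The differences are cosmetic: you organize the inversion count by whether each index lies in $I$ rather than by the position of $b$ relative to $n_I(a)$, and you give the injectivity of $c_I$ as a second argument for disjointness alongside the paper's first-index argument.
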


\begin{proof}
Let $\beta\in\Pi_+$.  We first take $\beta=e_a-e_b$ with $1\leq a<b\leq r$.  If $b=r$, then
$c_I(e_a-e_r)=-e_{p_I(a)}-e_{i_1}\in \Pi_-$
for all $a<r$.  Hence no root of the form $e_a-e_r$ contributes to $\nu_0(c_I)$.  If $b<r$, then $c_I(e_a-e_b)=-e_{p_I(a)}+e_{p_I(b)}=e_{p_I(b)}-e_{p_I(a)}$, which is positive if and only if $p_I(b)<p_I(a)$.  We now determine these inversions explicitly.  If $a\notin I$, then $p_I(a)=a$, whereas for every $b>a$ with $b<r$ one has either $p_I(b)=b$ or $p_I(b)>b$; hence no inversion begins at $a$.  If $a\in I$ and $n=n_I(a)$, then $p_I(a)=n$.  For $a<b<n$, the index $b$ is not in $I$, so $p_I(b)=b<n=p_I(a)$ and $(a,b)$ is an inversion.  For $b=n<r$, one has $p_I(b)>n=p_I(a)$, and for $b>n$ one has $p_I(b)\geq b>n=p_I(a)$.  Thus the inversions are exactly the pairs with $a\in I$ and $a<b<n_I(a)$.  For such a pair the positive image is $e_b-e_{n_I(a)}$.  This gives $A_I$, and the equality $c_I(e_a-e_b)=e_b-e_{n_I(a)}$ also proves \eqref{eq:preimage-A}.

Now take $\beta=e_a+e_b$ with $1\leq a<b\leq r$.  If $b<r$, then
$c_I(e_a+e_b)=-e_{p_I(a)}-e_{p_I(b)}\in\Pi_-,$
so there is no contribution.  If $b=r$, then $c_I(e_a+e_r)=-e_{p_I(a)}+e_{i_1}=e_{i_1}-e_{p_I(a)}$.  This is positive exactly when $p_I(a)>i_1$.  The set $p_I(\{1,\ldots,r-1\})$ is $\{1,\ldots,r\}\setminus\{i_1\}$, so the positive images are exactly the roots $e_{i_1}-e_q$ with $q>i_1$.  This is $B_I$, and \eqref{eq:preimage-B} follows by taking the unique $t$ with $p_I(t)=q$.

The sets $A_I$ and $B_I$ are disjoint because no root in $A_I$ has first index $i_1$: if $a=i_1$, then $e_b-e_{n_I(i_1)}$ has $b>i_1$, while if $a>i_1$, then again the first index is larger than $i_1$.  Therefore \eqref{eq:nu0-cI} is a disjoint union.
\end{proof}

\begin{lemma}
\label{lem:no-B-out}
No arrow in $\Gamma_{c_I}$ starts at a vertex of $B_I$.
\end{lemma}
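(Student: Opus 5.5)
An arrow $\alpha\to\beta$ out of $\alpha=e_{i_1}-e_q\in B_I$ requires $c_I^{-1}(\alpha)\leq\beta$ for some $\beta\in\nu_0(c_I)$. The plan is to compute the preimage explicitly, compare it against each possible target using Lemma~\ref{lem:order-tests}, and show that every comparison fails.

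By \eqref{eq:preimage-B}, $c_I^{-1}(e_{i_1}-e_q)=e_t+e_r$, where $t<r$ is the index with $p_I(t)=q$. Proposition~\ref{prop:nu0-cI} says that every vertex of $\Gamma_{c_I}$, in either $A_I$ or $B_I$, has the form $e_c-e_d$ with $1\leq c<d\leq r$. Then \eqref{eq:plus-not-minus} gives $e_t+e_r\nleq e_c-e_d$ for all such targets. Hence no arrow leaves $B_I$.

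The only points to check are the following.
\begin{itemize}
  \item The preimage really is of the shape $e_t+e_r$ with $1\leq t<r$. This holds because $t\in\{1,\ldots,r-1\}$ by construction in Proposition~\ref{prop:nu0-cI}.
  \item Every element of $A_I\sqcup B_I$ is a difference root $e_c-e_d$ with $c<d$. This is immediate from the definitions, since $b<n_I(a)$ and $i_1<q$.
\end{itemize}

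There is no genuine obstacle. The mechanism is simply that the coefficient $m_r$ equals $1$ on the preimage and $0$ on every target, so no target can dominate the preimage in the root order.
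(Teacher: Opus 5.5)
Your proposal is correct and follows the paper's proof essentially verbatim: compute $c_I^{-1}(e_{i_1}-e_q)=e_t+e_r$ via \eqref{eq:preimage-B}, observe that every vertex of $A_I\sqcup B_I$ has the form $e_c-e_d$ with $c<d$, and conclude with \eqref{eq:plus-not-minus}. Your closing remark about the coefficient $m_r$ is exactly the reason \eqref{eq:plus-not-minus} holds.
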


\begin{proof}
Let $\alpha=e_{i_1}-e_q\in B_I$.  By \eqref{eq:preimage-B},
$c_I^{-1}(\alpha)=e_t+e_r$
for some $t<r$.  Every element of $A_I\cup B_I$ is of the form $e_x-e_y$.  By \eqref{eq:plus-not-minus},
$e_t+e_r\nleq e_x-e_y$
for all $x<y$.  Hence there is no $\beta\in\nu_0(c_I)$ with $c_I^{-1}(\alpha)\leq \beta$, and therefore no outgoing arrow from $\alpha$.
\end{proof}

\begin{lemma}
\label{lem:no-A-to-A}
No arrow in $\Gamma_{c_I}$ starts at a vertex of $A_I$ and ends at a vertex of $A_I$.
\end{lemma}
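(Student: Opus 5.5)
Fix a source vertex $\alpha=e_b-e_{n}\in A_I$ with $a\in I$, $n=n_I(a)$ and $a<b<n$. By \eqref{eq:preimage-A}, $c_I^{-1}(\alpha)=e_a-e_b$. An arrow $\alpha\to\beta$ to some $\beta=e_{b'}-e_{n'}\in A_I$, with $a'\in I$, $n'=n_I(a')$ and $a'<b'<n'$, means $e_a-e_b\leq e_{b'}-e_{n'}$. By \eqref{eq:minus-minus-order} this is equivalent to $b'\leq a<b\leq n'$. The plan is to show that these inequalities cannot hold together with the defining constraints of $\beta$.

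From $a'<b'\leq a$ we get $a'<a$, and both $a,a'\in I$. Since $n'=n_I(a')$ is the next element of $I\cup\{r\}$ after $a'$, and $a\in I$ lies strictly above $a'$, we have $n'\leq a$. Combining this with $b'<n'$ gives $b'<n'\leq a<b$. This contradicts $b\leq n'$, which was required above. Hence no such $\beta$ exists.

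There is no real obstacle. The only point needing care is the consecutiveness of $a'$ and $n_I(a')$ in $I\cup\{r\}$: it forces the half-open intervals $(a',n_I(a')]$ attached to distinct elements of $I$ to be disjoint and ordered. The degenerate case $a'=a$ is excluded automatically, since it would require $b'\leq a<b'$.
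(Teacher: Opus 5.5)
Your proposal is correct and is essentially the paper's proof: both translate the arrow via \eqref{eq:preimage-A} and \eqref{eq:minus-minus-order} into $b'\leq a<b\leq n_I(a')$. Both then observe that $a'<a$ with $a\in I$ contradicts $n_I(a')$ being the next marked point after $a'$. The only cosmetic difference is that you first derive $n'\leq a$, whereas the paper reads off directly that $a$ lies strictly between $a'$ and $n_I(a')$.
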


\begin{proof}
Take two roots of $A_I$:
$\alpha=e_b-e_{n_I(a)}, \qquad \beta=e_d-e_{n_I(c)},$
where
$a,c\in I, \qquad a<b<n_I(a), \qquad c<d<n_I(c).$
By \eqref{eq:preimage-A}, an arrow $\alpha\to\beta$ would be equivalent to $e_a-e_b\leq e_d-e_{n_I(c)}$.  By \eqref{eq:minus-minus-order}, this is equivalent to $d\leq a<b\leq n_I(c)$.  But $c<d\leq a$ gives $c<a$, and $a<n_I(c)$ places the marked point $a\in I$ strictly between $c$ and the next element $n_I(c)$ of $I\cup\{r\}$.  This is impossible.
\end{proof}

\begin{proposition}
\label{prop:family-rational}
Every element of $\calF_r$ is rational.
\end{proposition}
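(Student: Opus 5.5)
The family $\calF_r$ consists of $w_0$, the elements $c_I$, and the elements $d_I$ for non-empty $I$. I will treat these three kinds separately and use the graph criterion of Lemma~\ref{lem:voloshyn-cycle}: it suffices to show that $\Gamma_u$ has no directed cycle.

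\emph{The element $w_0$.} Since $w_0(\Pi_+)\subseteq\Pi_-$, we have $\nu_0(w_0)=\varnothing$. The graph $\Gamma_{w_0}$ is therefore empty, so it is acyclic and $w_0$ is rational.

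\emph{The elements $c_I$.} Fix $I\neq\varnothing$. By Proposition~\ref{prop:nu0-cI}, the vertex set of $\Gamma_{c_I}$ is the disjoint union $A_I\sqcup B_I$. Lemma~\ref{lem:no-B-out} shows that no arrow leaves $B_I$. Lemma~\ref{lem:no-A-to-A} shows that no arrow runs from $A_I$ to $A_I$. The only remaining possible arrows go from $A_I$ to $B_I$.

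Now suppose there were a directed cycle. If it contains a vertex of $B_I$, that vertex would need an outgoing arrow, which Lemma~\ref{lem:no-B-out} forbids. So every vertex of the cycle lies in $A_I$, and every arrow of the cycle is an $A_I\to A_I$ arrow, which Lemma~\ref{lem:no-A-to-A} forbids. This argument also excludes loops, because a loop at a vertex of $A_I$ is itself an $A_I\to A_I$ arrow. Hence every directed walk has length at most one, and $\Gamma_{c_I}$ is acyclic. In the notation of Lemma~\ref{lem:voloshyn-cycle}, this gives $\nu_2(c_I)=\varnothing$.

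\emph{The elements $d_I$.} By \eqref{eq:tau-conjugation}, $d_I=\tau c_I\tau^{-1}$. By \eqref{eq:tau-rationality}, the map $\tau$ carries $\nu_0(c_I)$ bijectively onto $\nu_0(d_I)$. It also preserves arrows, because $\tau$ preserves both $\Pi_+$ and the root order. So $\Gamma_{d_I}$ is isomorphic to $\Gamma_{c_I}$ as a directed graph, hence acyclic, and $d_I$ is rational.

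There is no real obstacle here, since the substantive computations are already contained in Proposition~\ref{prop:nu0-cI} and Lemmas~\ref{lem:no-B-out}--\ref{lem:no-A-to-A}. The only points needing care are that loops are covered by Lemma~\ref{lem:no-A-to-A}, and that $\tau$ is an automorphism of the based root system, so conjugation by it transports $\Gamma_u$ isomorphically.
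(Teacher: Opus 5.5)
Your proposal is correct and follows the paper's proof essentially verbatim. You use $\nu_0(w_0)=\varnothing$ for $w_0$; the decomposition $A_I\sqcup B_I$ together with Lemmas~\ref{lem:no-B-out} and~\ref{lem:no-A-to-A} for $c_I$, with your explicit check that loops are covered being a small clarification the paper leaves implicit; and transport by $\tau$ via \eqref{eq:tau-rationality} for $d_I$.
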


\begin{proof}
The assertion for $w_0$ follows from $\nu_0(w_0)=\varnothing$.  Let $I\neq\varnothing$.  By Proposition~\ref{prop:nu0-cI}, the vertex set of $\Gamma_{c_I}$ is $A_I\sqcup B_I$.  By Lemma~\ref{lem:no-B-out}, vertices in $B_I$ have no outgoing arrows.  By Lemma~\ref{lem:no-A-to-A}, there is no arrow from $A_I$ to $A_I$.  Thus every directed walk in $\Gamma_{c_I}$ has length at most one: arrows may run from $A_I$ to $B_I$, and no arrow leaves $B_I$.  In particular $\Gamma_{c_I}$ is acyclic, so $c_I$ is rational by Lemma~\ref{lem:voloshyn-cycle}.  Finally $d_I=\tau c_I\tau^{-1}$, and rationality is invariant under $\tau$ by \eqref{eq:tau-rationality}.  Therefore $d_I$ is rational.
\end{proof}

The proof gives the uniform stabilization bound
$\nu_2(c_I)=\varnothing$ and $\nu_2(d_I)=\varnothing$ for every non-empty $I$.
For example, when $I=\{1\}$, one obtains
\begin{align*}
  \nu_0(c_{\{1\}})
  &=\set{e_1-e_q:2\leq q\leq r}
    \cup
    \set{e_b-e_r:2\leq b\leq r-1},\\
  \nu_1(c_{\{1\}})
  &=\set{e_b-e_r:2\leq b\leq r-1},
  \qquad
  \nu_2(c_{\{1\}})=\varnothing,
\end{align*}
which recovers the element constructed in \cite[Proposition 4.14]{Voloshyn2026}.

\subsection{Internal arrow calculus}
\label{sec:arrow-calculus}

The proof of rationality above used only the absence of arrows from $B_I$ and of arrows from $A_I$ to $A_I$.  For later reference, and to make the stabilization statement completely explicit, we record the exact arrows.  This also gives a useful independent check on the formulas for $\nu_1$ and on the length computations in Section~\ref{sec:lengths}.

Fix $I=\{i_1<\cdots<i_k\}\neq\varnothing$.  For $a\in I$ and $a<b<n_I(a)$, write $A(a,b)=e_b-e_{n_I(a)}\in A_I$ and $A(a,b)^-=c_I^{-1}A(a,b)=e_a-e_b$.  For $q$ with $i_1<q\leq r$, write
$B(q)=e_{i_1}-e_q\in B_I, \qquad B(q)^-=c_I^{-1}B(q)=e_{t(q)}+e_r,$
where $t(q)$ is the unique element of $[r-1]$ satisfying $p_I(t(q))=q$.  Thus the arrow relation is
$X\longrightarrow Y \quad\Longleftrightarrow\quad X^-\leq Y, \qquad X,Y\in A_I\sqcup B_I.$

\begin{proposition}
\label{prop:exact-arrows-cI}
For $I\neq\varnothing$, the only arrows in $\Gamma_{c_I}$ are
\begin{equation}
  A(a,b)\longrightarrow B(q)
  \quad\Longleftrightarrow\quad
  b\leq q,
  \label{eq:exact-arrow-A-to-B}
\end{equation}
where $a\in I$, $a<b<n_I(a)$, and $i_1<q\leq r$.  Consequently
\begin{equation*}
  \nu_1(c_I)=A_I,
  \qquad
  \nu_m(c_I)=\varnothing\quad(m\geq 2).
\end{equation*}
The same statement for $d_I$ is obtained by applying $\tau$.
\end{proposition}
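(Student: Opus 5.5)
By Lemma~\ref{lem:no-B-out} no arrow starts in $B_I$, and by Lemma~\ref{lem:no-A-to-A} no arrow joins two vertices of $A_I$. So the only possible arrows run from $A_I$ to $B_I$, and it remains to decide exactly when $A(a,b)\to B(q)$ holds.

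By definition of the arrow relation together with \eqref{eq:preimage-A}, the arrow $A(a,b)\to B(q)$ holds if and only if
\[
  A(a,b)^-=e_a-e_b\leq e_{i_1}-e_q=B(q).
\]
Here $a\in I$, $a<b<n_I(a)$, and $i_1<q\leq r$. By the interval test \eqref{eq:minus-minus-order}, this inequality is equivalent to $i_1\leq a<b\leq q$. Since $i_1=\min I$ and $a\in I$, the condition $i_1\leq a$ holds automatically. The condition $a<b$ is part of the definition of $A(a,b)$. Hence the inequality reduces to $b\leq q$, which is \eqref{eq:exact-arrow-A-to-B}.

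For $\nu_1(c_I)$ we use Lemma~\ref{lem:voloshyn-cycle}: $\nu_1$ is the set of vertices from which some arrow starts. Vertices of $B_I$ have no outgoing arrows, so $B_I\cap\nu_1(c_I)=\varnothing$. Conversely, every $A(a,b)$ has an outgoing arrow, because the vertex $B(r)$ always exists ($i_1<r$) and $b<n_I(a)\leq r$ gives $b\leq r$. Therefore $\nu_1(c_I)=A_I$.

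For $\nu_2(c_I)$, every arrow ends in $B_I$, and $B_I$ is a sink. So no directed walk of length $2$ exists, and $\nu_m(c_I)=\varnothing$ for all $m\geq2$, again by Lemma~\ref{lem:voloshyn-cycle}.

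The statement for $d_I$ follows by transporting everything through $\tau$ using \eqref{eq:tau-rationality}. Under $\tau$ one has $\nu_0(d_I)=\tau(\nu_0(c_I))$, and arrows correspond to arrows.

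No step here is a serious obstacle. The only point needing care is that the endpoint $q=r$ is allowed. This ensures $A_I\subseteq\nu_1(c_I)$ even when $n_I(a)=r$, since $b\leq r-1<r=q$.
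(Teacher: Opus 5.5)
Your proof is correct and follows essentially the same route as the paper. You rule out arrows out of $B_I$ and arrows within $A_I$ via Lemmas~\ref{lem:no-B-out} and~\ref{lem:no-A-to-A}, reduce $A(a,b)\to B(q)$ to $i_1\leq a<b\leq q$ by the interval test \eqref{eq:minus-minus-order}, and read off $\nu_1(c_I)=A_I$ (with $B(r)$ as the witness arrow target) and $\nu_m(c_I)=\varnothing$ for $m\geq 2$ from the sink property of $B_I$ and the walk description in Lemma~\ref{lem:voloshyn-cycle}, exactly as the paper does.
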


\begin{proof}
Lemma~\ref{lem:no-B-out} excludes all arrows with source in $B_I$, and Lemma~\ref{lem:no-A-to-A} excludes all arrows from $A_I$ to $A_I$.  It remains to decide when $A(a,b)$ points to $B(q)$.  By definition, this condition is $e_a-e_b\leq e_{i_1}-e_q$.  Using \eqref{eq:minus-minus-order}, it is equivalent to $i_1\leq a<b\leq q$.  The first two inequalities already hold because $a\in I$, $i_1=\min I$, and $a<b$.  Hence the condition reduces to $b\leq q$, proving \eqref{eq:exact-arrow-A-to-B}.

By definition of the sequence $\nu_m$, the set $\nu_1(c_I)$ consists exactly of those vertices of $\nu_0(c_I)$ from which at least one arrow starts.  Formula \eqref{eq:exact-arrow-A-to-B} shows that every $A(a,b)$ has an outgoing arrow, for instance to $B(r)$, while no $B(q)$ has one.  Therefore $\nu_1(c_I)=A_I$.  Since the target of every arrow lies in $B_I$, and no arrow starts in $B_I$, there is no directed walk of length two; equivalently $\nu_m(c_I)=\varnothing$ for $m\geq 2$.  The $d_I$ statement follows from \eqref{eq:tau-rationality}.
\end{proof}

The exact arrow formula admits a matrix description.  Order the vertices of $A_I$ lexicographically by the pair $(a,b)$ and the vertices of $B_I$ increasingly by $q$, and use the convention that the $(X,Y)$ entry is $1$ precisely when $X\to Y$.  The adjacency matrix of $\Gamma_{c_I}$ then has block form
\begin{equation*}
  M_I=
  \begin{pmatrix}
    0 & R_I\\
    0 & 0
  \end{pmatrix},
  \qquad
  (R_I)_{(a,b),q}=
  \begin{cases}
    1,& b\leq q,\\
    0,& b>q,
  \end{cases}
\end{equation*}
where $(a,b)$ ranges over $a\in I$, $a<b<n_I(a)$ and $q$ ranges over $i_1<q\leq r$.  In particular
$M_I^2=0.$
Thus $M_I$ is square-zero.  Over any field $\Bbbk$, the ideal $(M_I)=\Bbbk M_I$ of $\Bbbk[M_I]$ is square-zero, and $\Bbbk[M_I]=\operatorname{span}_{\Bbbk}\{\operatorname{Id},M_I\}$, where $\operatorname{Id}$ is the identity matrix.  This algebra has dimension one when $M_I=0$ and dimension two otherwise.  The number of arrows is also explicit.  Summing \eqref{eq:exact-arrow-A-to-B} gives
\begin{align}
  \#\operatorname{Arr}(\Gamma_{c_I})
  &=\sum_{a\in I}\sum_{b=a+1}^{n_I(a)-1}\#\{q:i_1<q\leq r,b\leq q\}
  \notag\\
  &=\sum_{a\in I}\sum_{b=a+1}^{n_I(a)-1}(r-b+1).
  \label{eq:number-arrows-cI}
\end{align}
The same number is obtained for $\Gamma_{d_I}$.

\begin{corollary}
\label{cor:arrow-count-closed}
For $I=\{i_1<\cdots<i_k\}$ and $n_I(i_j)=i_{j+1}$ for $j<k$, $n_I(i_k)=r$, one has
\begin{equation*}
  \#\operatorname{Arr}(\Gamma_{c_I})
  =\frac12\sum_{j=1}^k
     \bigl(n_I(i_j)-i_j-1\bigr)
     \bigl(2r-i_j-n_I(i_j)+2\bigr).
\end{equation*}
\end{corollary}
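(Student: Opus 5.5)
The closed formula will come directly from the double sum \eqref{eq:number-arrows-cI}. By Proposition~\ref{prop:exact-arrows-cI}, the arrows are exactly the pairs $(A(a,b),B(q))$ with $b\leq q$ and $i_1<q\leq r$. The first step is to confirm the count $r-b+1$ of admissible $q$ for a fixed $b$. Since $a\geq i_1$ and $b>a$, we have $b>i_1$. Hence the constraint $q>i_1$ is automatic once $q\geq b$, and the admissible $q$ are exactly $b,b+1,\ldots,r$, giving $r-b+1$ values. This justifies the second line of \eqref{eq:number-arrows-cI}, which we may also take as given.

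Next we evaluate the inner sum for each $a=i_j$, writing $n=n_I(i_j)$. The sum $\sum_{b=a+1}^{n-1}(r-b+1)$ is an arithmetic progression with $n-a-1$ terms. Its first term is $r-a$ and its last term is $r-n+2$. By the usual formula, it equals
\[
\tfrac12\,(n-a-1)\bigl((r-a)+(r-n+2)\bigr)=\tfrac12\,(n-a-1)(2r-a-n+2).
\]
This is valid also when the range is empty ($n=a+1$), since then both sides vanish.

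Finally, we sum over $j=1,\ldots,k$ with $a=i_j$ and $n=n_I(i_j)$ to obtain the stated formula. The count for $\Gamma_{d_I}$ is the same by $\tau$-conjugation \eqref{eq:tau-rationality}, which gives an arrow-preserving bijection of vertex sets.

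There is no real obstacle here. The only point needing care is the observation $b>i_1$, which makes the lower bound $q>i_1$ redundant, together with the check of the endpoint terms of the progression.
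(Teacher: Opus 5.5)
Your proposal is correct and matches the paper's proof: both evaluate the inner sum $\sum_b (r-b+1)$ over each gap as an arithmetic progression with $n_I(i_j)-i_j-1$ terms and then sum over $j$. The paper writes this sum as $g_j(r+1)-\sum b$, while you use the first-plus-last-term formula, and your remark that $b>i_1$ makes the constraint $q>i_1$ redundant justifies the second line of \eqref{eq:number-arrows-cI}, which the paper states without comment.
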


\begin{proof}
For a fixed gap $i_j<b<n_I(i_j)$, put $g_j=n_I(i_j)-i_j-1$.  The contribution of that gap to \eqref{eq:number-arrows-cI} is
\begin{align*}
  \sum_{b=i_j+1}^{n_I(i_j)-1}(r-b+1)
  &=g_j(r+1)-\sum_{b=i_j+1}^{n_I(i_j)-1}b
  \\
  &=g_j(r+1)-\frac{g_j(i_j+1+n_I(i_j)-1)}{2}
  \\
  &=\frac{g_j(2r-i_j-n_I(i_j)+2)}{2}.
\end{align*}
Summing over the gaps proves the formula.
\end{proof}

\begin{example}
\label{ex:c1-arrow-count}
For $I=\{1\}$, the single gap has $n_I(1)=r$.  Hence
$\#\operatorname{Arr}(\Gamma_{c_{\{1\}}}) =\frac12(r-2)(r+1).$
Indeed, $e_b-e_r\longrightarrow e_1-e_q$ exactly when $2\leq b\leq r-1$ and $b\leq q\leq r$, so the arrows are indexed by the triangular set
$\{(b,q):2\leq b\leq r-1,\ b\leq q\leq r\}.$
For $r=5$ this gives nine arrows: four out of $e_2-e_5$, three out of $e_3-e_5$, and two out of $e_4-e_5$.
\end{example}

The exact internal arrow calculus also motivates the rigidity analysis under left multiplication.  If $\Inv(v)=\{\beta\in\Pi_+:v(\beta)\in\Pi_-\}$ and $|\gamma|$ denotes the positive root in $\{\gamma,-\gamma\}$, then $\Inv(s_av)\mathbin{\triangle}\Inv(v)=\{|v^{-1}(\alpha_a)|\}$.  Thus $v^{-1}(\alpha_a)$ is added when it is positive, whereas $-v^{-1}(\alpha_a)$ is removed when $v^{-1}(\alpha_a)$ is negative.  In signed one-line notation, $s_a$ for $1\leq a\leq r-1$ acts by the corresponding unsigned transposition of target labels, while $s_r$ acts by the signed transposition $e_{r-1}\mapsto-e_r$ and $e_r\mapsto-e_{r-1}$.  The coordinate analysis below determines exactly which of these operations preserve rationality.

\section{One-step rigidity and classification}
\label{sec:rigidity}

The reverse inclusion in Theorem~\ref{thm:main-classification} is proved by induction.  The induction step requires the following exact description of which simple left multiplications of a family element can remain rational.

\subsection{Simple reflections acting on the family}
\label{subsec:action}

Let $I\subseteq [r-1]$, and use the convention $c_\varnothing=d_\varnothing=w_0$.  For $1\leq a\leq r-2$, left multiplication by $s_a$ interchanges the target labels $a$ and $a+1$.  Thus, in terms of the cycle $p_I$, it swaps the labels $a,a+1$ in the image of $p_I$.  The following identities are immediate from \eqref{eq:pI-intro}--\eqref{eq:cI-intro}; they are recorded because they are the combinatorial skeleton of $\Gamma(D_r)$.

\begin{lemma}
\label{lem:admissible-action-c}
For $I\subseteq [r-1]$ and $1\leq a\leq r-2$,
\begin{equation}
  a+1\in I
  \quad\Longrightarrow\quad
  s_a c_I=c_{I\symdiff\{a\}}.
  \label{eq:sa-cI-admissible}
\end{equation}
Furthermore
\begin{equation}
  s_{r-1}c_I=c_{I\symdiff\{r-1\}}.
  \label{eq:srminusone-cI}
\end{equation}
By applying $\tau$ one obtains
\begin{equation}
  a+1\in I
  \quad\Longrightarrow\quad
  s_a d_I=d_{I\symdiff\{a\}}
  \quad(1\leq a\leq r-2),
  \qquad
  s_r d_I=d_{I\symdiff\{r-1\}}.
  \label{eq:dI-action}
\end{equation}
\end{lemma}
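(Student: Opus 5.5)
The plan is to verify each identity directly in signed one-line notation, comparing $s_a c_I$ with the claimed family element on every basis vector $e_j$. For $1\leq a\leq r-2$ the reflection $s_a$ swaps $e_a$ and $e_{a+1}$ and fixes the other $e_j$. Hence $s_a c_I(e_j)=\pm e_{(a\ a+1)p_I(j)}$ with the \emph{same} sign as $c_I(e_j)$. Since the sign pattern of every $c_J$ (including $c_\varnothing=w_0$) is position-determined — negative at $1,\ldots,r-1$, positive at $r$ — it suffices to prove the permutation identity
\[
  (a\ a{+}1)\circ p_I=p_{I\symdiff\{a\}}\qquad\text{whenever } a+1\in I .
\]
The identity $c_I(e_r)=e_{i_1}$ then needs no separate treatment, because it is just the entry of $p_I$ at $r$ carrying the positive sign.

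I will verify this in two cases. Write the cycle of $p_I$ as $(i_1\ \cdots\ r)$ and let $x$ denote the predecessor of $a+1$ in this cycle.

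If $a\notin I$, then $p_I(x)=a+1$ and $p_I(a)=a$. After composing with the transposition we get $x\mapsto a$, $a\mapsto a+1$, and all other values unchanged. So $a$ is inserted into the cycle immediately before $a+1$. Since $a$ is the integer just below $a+1$, this insertion respects the increasing order of the cycle, and the result is $p_{I\cup\{a\}}$.

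If $a\in I$, then $a+1\in I$ forces $n_I(a)=a+1$, so the predecessor of $a+1$ is $x=a$. Composing gives $a\mapsto a$, and the predecessor $y$ of $a$ now maps to $a+1$. So $a$ is deleted from the cycle, giving $p_{I\setminus\{a\}}$. This set is non-empty because it contains $a+1$.

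Both subcases include the possibility that $a$ or $a+1$ is the minimum $i_1$, with $y=r$ in that case. The cyclic description handles this uniformly.

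For \eqref{eq:srminusone-cI}, $s_{r-1}$ swaps the labels $r-1$ and $r$ while again preserving all signs. I will check the same two cases.

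If $r-1\notin I$, including $I=\varnothing$, the cycle $\cdots i_k\to r\to i_1$ together with the fixed point $r-1$ becomes $i_k\to r-1\to r\to i_1$. The sign $-$ remains at positions $\le r-1$ and $+$ at position $r$, so this is $c_{I\cup\{r-1\}}$. For $I=\varnothing$ it gives $c_{\{r-1\}}$: $e_{r-1}\mapsto -e_r$ and $e_r\mapsto e_{r-1}$.

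If $r-1\in I$, then $r-1=i_k$, and the same computation read backwards removes $r-1$. When $I=\{r-1\}$ it returns $w_0$, consistent with the convention $c_\varnothing=w_0$.

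Finally, \eqref{eq:dI-action} follows by conjugating the $c$-identities with $\tau$, using \eqref{eq:tau-conjugation}: we have $d_J=\tau c_J\tau^{-1}$, $\tau s_a\tau^{-1}=s_a$ for $a\le r-2$, and $\tau s_{r-1}\tau^{-1}=s_r$. Conjugating also covers $d_\varnothing=w_0$, since $\tau w_0\tau^{-1}=w_0$ for odd $r$.

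The only real obstacle is bookkeeping at the boundary: when $a$ or $a+1$ equals $i_1$, and in the empty-set and singleton cases. I will handle these by always working with the cyclic word $(i_1\ \cdots\ i_k\ r)$ rather than separate formulas for $c_I(e_r)$.
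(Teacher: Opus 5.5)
Your proposal is correct and follows essentially the same route as the paper: you reduce to the unsigned identity $(a\ a{+}1)\,p_I=p_{I\symdiff\{a\}}$, using that left multiplication by $s_a$ for $a\le r-1$ preserves source signs, and you handle insertion and deletion of $a$ through the predecessor in the cycle. You then treat $s_{r-1}$ by the same label-swap argument and pass to $d_I$ by $\tau$-conjugation, as the paper does. Two small differences are tidy variants rather than a different approach: you derive the case $r-1\in I$ from the case $r-1\notin I$ using that $(r-1\ r)$ is an involution, where the paper checks each case directly, and you explicitly verify $\tau w_0\tau^{-1}=w_0$ to cover the convention $d_\varnothing=w_0$.
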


\begin{proof}
For $1\leq a\leq r-2$, let $\sigma_a=(a\ a+1)\in S_r$.  Since $s_a$ interchanges $e_a$ and $e_{a+1}$ and fixes the remaining basis vectors, the unsigned permutation underlying $s_ac_I$ is $\sigma_a p_I$, while the sign attached to each source is unchanged.

Assume that $a+1\in I$.  If $a\in I$, let $h$ be the predecessor of $a$ in the cycle $p_I$; thus $p_I(h)=a$ and $p_I(a)=a+1$.  The permutation $\sigma_ap_I$ sends $h$ to $a+1$ and fixes $a$, while it agrees with $p_I$ on every other source except for these two forced target changes.  Hence it removes $a$ from the unique non-trivial cycle and equals $p_{I\setminus\{a\}}=p_{I\symdiff\{a\}}$.  This argument also covers the case $a=\min I$, in which the predecessor $h$ is $r$.  If $a\notin I$, let $h$ be the predecessor of $a+1$ in $p_I$.  Then $p_I(h)=a+1$ and $p_I(a)=a$.  Consequently, $\sigma_ap_I$ sends $h$ to $a$ and $a$ to $a+1$, thereby inserting $a$ immediately before $a+1$ in the cycle.  Thus $\sigma_ap_I=p_{I\cup\{a\}}=p_{I\symdiff\{a\}}$.  Since the source signs of the $c$-family are unchanged, these permutation identities prove \eqref{eq:sa-cI-admissible}.

For the spin reflection $s_{r-1}$, let $\sigma=(r-1\ r)$.  This reflection interchanges $e_{r-1}$ and $e_r$ without changing signs, so the unsigned permutation underlying $s_{r-1}c_I$ is $\sigma p_I$.  If $r-1\notin I$ and $I\neq\varnothing$, the last part of the cycle $p_I$ is $i_k\mapsto r\mapsto i_1$, while $r-1$ is fixed.  After composing on the left with $\sigma$, this becomes $i_k\mapsto r-1\mapsto r\mapsto i_1$, so $\sigma p_I=p_{I\cup\{r-1\}}$.  If $r-1\in I$ and $I\neq\{r-1\}$, the last part is $i_{k-1}\mapsto r-1\mapsto r\mapsto i_1$; composition with $\sigma$ fixes $r-1$ and replaces this segment by $i_{k-1}\mapsto r\mapsto i_1$, so $\sigma p_I=p_{I\setminus\{r-1\}}$.  For $I=\{r-1\}$, the two transpositions cancel and $\sigma p_I=\mathrm{id}$; for $I=\varnothing$, one has $\sigma\mathrm{id}=p_{\{r-1\}}$.  The source signs again agree with those prescribed for the corresponding $c$-element, proving \eqref{eq:srminusone-cI} in every case.

Finally, conjugation by $\tau$ fixes $s_a$ for $a\leq r-2$, exchanges $s_{r-1}$ with $s_r$, and sends $c_I$ to $d_I$.  Applying $\tau$ to the two identities just proved gives \eqref{eq:dI-action}.
\end{proof}

The remaining simple reflections are not merely outside the family; they are not rational.  We first prove this for the $c$-half.

\subsection{Non-admissible ordinary reflections}
\label{subsec:ordinary-exclusion}

\begin{lemma}
\label{lem:ordinary-exclusion-c}
Let $I\subseteq [r-1]$ and let $1\leq a\leq r-2$.  If $a+1\notin I$, then $s_a c_I$ is not rational.
\end{lemma}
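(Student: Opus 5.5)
The plan is to exhibit, in every case, a single vertex $\alpha\in\nu_0(s_ac_I)$ with $(s_ac_I)^{-1}(\alpha)\leq\alpha$. The final assertion of Lemma~\ref{lem:voloshyn-cycle} then shows that $s_ac_I$ is not rational.

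Put $u=s_ac_I$. As in the proof of Lemma~\ref{lem:admissible-action-c}, $u$ has underlying permutation $\sigma_ap_I$ with $\sigma_a=(a\ a+1)$. Its signs are those of $c_I$: negative at the sources $1,\ldots,r-1$ and positive at the source $r$. With the convention $c_\varnothing=w_0$, the case $I=\varnothing$ is included, since then $p_I=\mathrm{id}$. The hypothesis $a+1\notin I$, together with $a+1\leq r-1$, gives $p_I(a+1)=a+1$. Hence $c_I(e_{a+1})=-e_{a+1}$, and so $u(e_{a+1})=-e_a$. I split according to whether $a\in I$.

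\emph{Case $a\notin I$.} Then $p_I(a)=a$, so $c_I(e_a)=-e_a$ and $u(e_a)=-e_{a+1}$. Take $\alpha=\alpha_a=e_a-e_{a+1}$. Then
\[
u^{-1}(\alpha)=u^{-1}(e_a)-u^{-1}(e_{a+1})=-e_{a+1}+e_a=\alpha .
\]
Thus $u^{-1}(\alpha)$ is positive, so $\alpha\in\nu_0(u)$, and $\Gamma_u$ has a loop at $\alpha$. This case contains $I=\varnothing$.

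\emph{Case $a\in I$.} Let $n=n_I(a)=p_I(a)$. Since $a+1\notin I$, we have $n\geq a+2$; this also allows $n=r$. Because $a<r$, the sign at $a$ is negative, so $c_I(e_a)=-e_n$. Since $n\notin\{a,a+1\}$, $s_a$ fixes $e_n$, giving $u(e_a)=-e_n$, that is, $u^{-1}(e_n)=-e_a$. Take $\alpha=e_a-e_n\in\Pi_+$. Then
\[
u^{-1}(\alpha)=u^{-1}(e_a)-u^{-1}(e_n)=-e_{a+1}+e_a=e_a-e_{a+1}.
\]
This root is positive, so $\alpha\in\nu_0(u)$. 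By \eqref{eq:minus-minus-order} with $a\leq a<a+1\leq n$, we get $e_a-e_{a+1}\leq e_a-e_n$, which is again a loop.

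There is no real obstacle. The only point needing care is to confirm that the needed values of $u^{-1}$ are correct at the boundary. When $a=\min I$, the predecessor of $a$ in the cycle is $r$, with a positive sign. When $n=r$, the source of $e_r$ is $i_k\leq r-1$, so it carries a negative sign. The chosen roots avoid the predecessor of $a$ altogether, and in the case $n=r$ one still has $c_I(e_a)=-e_r$. Consequently both boundary situations are covered by the same computation.
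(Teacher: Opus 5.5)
Your proof is correct and takes the paper's approach: a loop at $\alpha_a$ when $a\notin I$, and a loop at $e_a-e_{n_I(a)}$ with preimage $e_a-e_{a+1}$ when $a\in I$, concluded through the final assertion of Lemma~\ref{lem:voloshyn-cycle}. The only differences are cosmetic: you fold the case $I=\varnothing$ into the case $a\notin I$, using that $w_0$ has trivial underlying permutation and the same sign pattern, and you compute $u^{-1}$ on basis vectors rather than $u$ on roots.
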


\begin{proof}
If $I=\varnothing$, then $c_I=w_0$.  Since $w_0(\alpha_a)=-\alpha_a$, we have
$s_a w_0(\alpha_a)=\alpha_a, \qquad (s_a w_0)^{-1}(\alpha_a)=\alpha_a,$
and $s_a w_0$ has a loop at $\alpha_a$.

Assume now $I\neq\varnothing$.  There are two cases.

If $a\notin I$, then both $a$ and $a+1$ are fixed by the unsigned cycle $p_I$.  Hence
$c_I(\alpha_a)=c_I(e_a-e_{a+1})=-e_a+e_{a+1}=-\alpha_a,$
and therefore
$s_a c_I(\alpha_a)=\alpha_a, \qquad (s_a c_I)^{-1}(\alpha_a)=\alpha_a.$
Thus $s_a c_I$ has a loop.

If $a\in I$ and $a+1\notin I$, put $m=n_I(a)$.  Then $m>a+1$, while the definition of $n_I(a)$ gives $p_I(a)=m$ and $p_I(a+1)=a+1$.  Hence
$c_I(e_a-e_{a+1})=-e_m+e_{a+1}=e_{a+1}-e_m,$
and left multiplication by $s_a$ gives
$s_a c_I(e_a-e_{a+1})=e_a-e_m.$
Thus, with $u=s_a c_I$ and $\beta=e_a-e_m$, one has
$u^{-1}(\beta)=e_a-e_{a+1}\leq e_a-e_m=\beta,$
where the inequality follows from \eqref{eq:minus-minus-order}.  Hence $u$ has a loop at $\beta$.

In all cases Lemma~\ref{lem:voloshyn-cycle} implies that $s_a c_I$ is not rational.
\end{proof}

Applying $\tau$ gives the identical statement for the $d$-half.

\begin{corollary}
\label{cor:ordinary-exclusion-d}
Let $I\subseteq [r-1]$ and $1\leq a\leq r-2$.  If $a+1\notin I$, then $s_a d_I$ is not rational.
\end{corollary}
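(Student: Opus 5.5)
The plan is to deduce Corollary~\ref{cor:ordinary-exclusion-d} from Lemma~\ref{lem:ordinary-exclusion-c} by conjugating with the diagram automorphism $\tau$. No new root computation is needed. The ingredients are the conjugation formulas \eqref{eq:tau-conjugation} and the invariance of rationality under $\tau$ recorded in \eqref{eq:tau-rationality}.

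First, fix $I\subseteq[r-1]$ and $1\leq a\leq r-2$ with $a+1\notin I$. By \eqref{eq:tau-conjugation}, $\tau s_a\tau^{-1}=s_a$ because $a\leq r-2$. Also $\tau c_I\tau^{-1}=d_I$. For $I=\varnothing$ this holds by the convention $c_\varnothing=d_\varnothing=w_0$, together with the fact that $\tau$ commutes with $w_0$ for odd $r$: both act diagonally on the basis $e_1,\ldots,e_r$. Multiplying the two identities gives
\[
  \tau(s_ac_I)\tau^{-1}=(\tau s_a\tau^{-1})(\tau c_I\tau^{-1})=s_ad_I .
\]

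Second, Lemma~\ref{lem:ordinary-exclusion-c} shows that $s_ac_I$ is not rational. By \eqref{eq:tau-rationality}, $\tau$ maps $\nu_0(s_ac_I)$ bijectively onto $\nu_0(s_ad_I)$ and preserves the arrow relation. So the directed loop found in $\Gamma_{s_ac_I}$ is carried to a directed loop in $\Gamma_{s_ad_I}$. Lemma~\ref{lem:voloshyn-cycle} then shows that $s_ad_I$ is not rational.

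The only point needing care is that \eqref{eq:tau-rationality} uses that $\tau$ preserves $\Pi_+$ and the root order. This holds because $\tau$ permutes the simple roots, fixing $\alpha_1,\ldots,\alpha_{r-2}$ and swapping $\alpha_{r-1}$ with $\alpha_r$. Since this is already recorded in the paper, there is no real obstacle.
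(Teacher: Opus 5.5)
Your proposal is correct and is essentially the paper's argument: the paper obtains the corollary by applying $\tau$ to Lemma~\ref{lem:ordinary-exclusion-c}, using $\tau s_a\tau^{-1}=s_a$ for $a\leq r-2$, $\tau c_I\tau^{-1}=d_I$, and the $\tau$-invariance of rationality in \eqref{eq:tau-rationality}. Your extra check that $\tau$ commutes with $w_0$ covers the case $I=\varnothing$, which the paper leaves implicit.
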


\subsection{Non-admissible spin reflections}
\label{subsec:spin-exclusion}

For the $c$-half, the admissible spin reflection is $s_{r-1}$; the other spin reflection $s_r$ is excluded except at $w_0$, where it gives $d_{\{r-1\}}$.

\begin{lemma}
\label{lem:spin-exclusion-c}
If $I\subseteq [r-1]$ is non-empty, then $s_r c_I$ is not rational.
\end{lemma}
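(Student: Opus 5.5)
The plan is to exhibit, for $u=s_rc_I$, either a loop or a two-cycle in $\Gamma_u$, and then conclude by Lemma~\ref{lem:voloshyn-cycle}. This matches how Lemma~\ref{lem:ordinary-exclusion-c} was proved. Recall that $s_r$ acts by $e_{r-1}\mapsto -e_r$, $e_r\mapsto -e_{r-1}$ and fixes $e_j$ for $j\leq r-2$. Write $I=\{i_1<\cdots<i_k\}$. The relevant data of $c_I$ are $c_I(e_{i_k})=-e_{p_I(i_k)}=-e_r$ and $c_I(e_r)=e_{i_1}$. The split is according to whether $i_1<r-1$, which is equivalent to $I\neq\{r-1\}$.

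\emph{Case $I\neq\{r-1\}$.} Here $i_1\leq r-2$, so $s_r$ fixes $e_{i_1}$. Hence
\[
u(e_{i_k})=s_r(-e_r)=e_{r-1},\qquad u(e_r)=s_r(e_{i_1})=e_{i_1}.
\]
Put $\beta=u(e_{i_k}+e_r)=e_{i_1}+e_{r-1}$. This is a positive root, and since $e_{i_k}+e_r\in\Pi_+$ we get $\beta\in\nu_0(u)$. Moreover $u^{-1}(\beta)=e_{i_k}+e_r$. Apply \eqref{eq:plus-plus-order-2} with $a=i_1<r-1$ and $b=i_k<r$. Because $i_1\leq i_k$, it gives $e_{i_k}+e_r\leq e_{i_1}+e_{r-1}=\beta$. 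So $\Gamma_u$ has a loop at $\beta$. The case $k=1$ (so $i_k=i_1$) is covered, since \eqref{eq:plus-plus-order-2} allows $a=b$.

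\emph{Case $I=\{r-1\}$.} Here $c_I$ sends $e_{r-1}\mapsto -e_r$, $e_r\mapsto e_{r-1}$, and $e_j\mapsto -e_j$ for $j\leq r-2$. Composing with $s_r$ gives $u(e_{r-1})=e_{r-1}$, $u(e_r)=-e_r$, and $u(e_j)=-e_j$ for $j\leq r-2$.

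We compute $\nu_0(u)$ directly by running through the four root types $e_j\pm e_{r-1}$ and $e_j\pm e_r$ with $j\leq r-2$, the roots $e_j\pm e_l$ with $j<l\leq r-2$, and finally $e_{r-1}\pm e_r$. Every root other than $e_{r-1}\pm e_r$ is sent into $\Pi_-$. The last two are exchanged: $u(\alpha_{r-1})=\alpha_r$ and $u(\alpha_r)=\alpha_{r-1}$. Thus $\nu_0(u)=\{\alpha_{r-1},\alpha_r\}$ with $u^{-1}(\alpha_r)=\alpha_{r-1}$ and $u^{-1}(\alpha_{r-1})=\alpha_r$. This yields the arrows $\alpha_r\to\alpha_{r-1}$ and $\alpha_{r-1}\to\alpha_r$, a two-cycle. (Only the two arrows are needed, so a full computation of $\nu_0$ is not actually required.)

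In both cases Lemma~\ref{lem:voloshyn-cycle} shows that $s_rc_I$ is not rational. The only delicate point is the first case: one must check that $s_r$ does not disturb $e_{i_1}$, and that \eqref{eq:plus-plus-order-2} applies with its hypothesis $a<r-1$. This is exactly why $I=\{r-1\}$ has to be split off and treated separately.
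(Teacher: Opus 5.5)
Your proof is correct. It uses the same certificate strategy as the paper: a loop for $I\neq\{r-1\}$, and the spin two-cycle $\alpha_{r-1}\leftrightarrow\alpha_r$ for $I=\{r-1\}$. The difference is that your case split is coarser.

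The paper splits $I\neq\{r-1\}$ into two cases, $r-1\notin I$ and $r-1\in I$. In the second case it takes the source root $e_b+e_r$ with $b=\max(I\setminus\{r-1\})$. Its image is $e_{i_1}+e_r$, which carries a loop by \eqref{eq:plus-plus-order-1}.

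You instead use the single source root $e_{i_k}+e_r$ throughout. When $r-1\in I$ this is $\alpha_r$ itself, and your loop vertex is again $e_{i_1}+e_{r-1}$. This is legitimate because \eqref{eq:plus-plus-order-2} holds for every $b<r$, including $b=r-1$: the difference is $e_{i_1}-e_r\geq 0$.

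Your route gives one uniform loop certificate for all $I\neq\{r-1\}$ and avoids the auxiliary index $b$. In the paper's obstruction table, the $s_rc_I$ row would shrink to the single loop vertex $e_{i_1}+e_{r-1}$ plus the two-cycle. The paper's extra split is not needed for correctness.

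You also correctly identify why $I=\{r-1\}$ must be separated. There $u(\alpha_r)=\alpha_{r-1}$, and $\alpha_r\not\leq\alpha_{r-1}$, so no loop arises from $e_{i_k}+e_r$.
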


\begin{proof}
Write $I=\{i_1<\cdots<i_k\}$ and put $u=s_r c_I$.

First suppose $r-1\notin I$.  Let $b=i_k$.  Since $p_I(b)=r$ and $c_I(e_r)=e_{i_1}$,
$c_I(e_b+e_r)=-e_r+e_{i_1}=e_{i_1}-e_r.$
Applying $s_r$, which sends $e_r$ to $-e_{r-1}$, yields
$u(e_b+e_r)=e_{i_1}+e_{r-1}.$
By \eqref{eq:plus-plus-order-2}, because $i_1\leq b<r-1$,
$e_b+e_r\leq e_{i_1}+e_{r-1}.$
Therefore $u$ has a loop at $e_{i_1}+e_{r-1}$.

Now suppose $r-1\in I$ and $I\neq\{r-1\}$.  Let $b$ be the largest element of $I\setminus\{r-1\}$.  Then $p_I(b)=r-1$, and
$c_I(e_b+e_r)=-e_{r-1}+e_{i_1}.$
Since $s_r(e_{r-1})=-e_r$, one obtains
$u(e_b+e_r)=e_{i_1}+e_r.$
By \eqref{eq:plus-plus-order-1},
$e_b+e_r\leq e_{i_1}+e_r,$
and $u$ has a loop at $e_{i_1}+e_r$.

It remains to treat $I=\{r-1\}$.  In this case the two spin simple roots form a two-cycle.  Indeed,
\begin{align*}
  u(\alpha_{r-1})
  &=s_r c_I(e_{r-1}-e_r)
    =s_r(-e_r-e_{r-1})
    =e_{r-1}+e_r=\alpha_r,\\
  u(\alpha_r)
  &=s_r c_I(e_{r-1}+e_r)
    =s_r(e_{r-1}-e_r)
    =e_{r-1}-e_r=\alpha_{r-1}.
\end{align*}
Equivalently,
\begin{equation}
  u^{-1}(\alpha_{r-1})=\alpha_r\leq \alpha_r,
  \qquad
  u^{-1}(\alpha_r)=\alpha_{r-1}\leq \alpha_{r-1},
  \label{eq:spin-two-cycle-arrows}
\end{equation}
so $\Gamma_u$ contains the directed cycle
$\alpha_{r-1}\longrightarrow \alpha_r\longrightarrow \alpha_{r-1}.$
Thus $s_r c_I$ is not rational for every non-empty $I$.
\end{proof}

The diagram automorphism gives the dual spin exclusion.

\begin{corollary}
\label{cor:spin-exclusion-d}
If $I\subseteq [r-1]$ is non-empty, then $s_{r-1}d_I$ is not rational.
\end{corollary}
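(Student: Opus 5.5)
The plan is to deduce Corollary~\ref{cor:spin-exclusion-d} from Lemma~\ref{lem:spin-exclusion-c} by conjugating with the diagram automorphism $\tau$, as was done for Corollary~\ref{cor:ordinary-exclusion-d}.

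Fix a non-empty $I\subseteq[r-1]$ and put $u=s_r c_I$. By \eqref{eq:tau-conjugation} we have $\tau s_r\tau^{-1}=s_{r-1}$, because $\tau$ is an involution and interchanges the two spin reflections. We also have $\tau c_I\tau^{-1}=d_I$. Hence
\[
  \tau u\tau^{-1}=(\tau s_r\tau^{-1})(\tau c_I\tau^{-1})=s_{r-1}d_I .
\]
By Lemma~\ref{lem:spin-exclusion-c}, $u$ is not rational. By \eqref{eq:tau-rationality}, rationality is invariant under conjugation by $\tau$, so $s_{r-1}d_I$ is not rational.

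To make the argument concrete, one can also transport the explicit obstructions found in Lemma~\ref{lem:spin-exclusion-c}. Applying $\tau$ to each loop or two-cycle of $\Gamma_u$ gives a loop or two-cycle of $\Gamma_{s_{r-1}d_I}$:
\begin{itemize}
  \item if $r-1\notin I$, the loop at $e_{i_1}+e_{r-1}$ becomes a loop at $e_{i_1}+e_{r-1}$, since $\tau$ fixes this root;
  \item if $r-1\in I$ and $I\neq\{r-1\}$, the loop at $e_{i_1}+e_r$ becomes a loop at $e_{i_1}-e_r$;
  \item if $I=\{r-1\}$, the two-cycle $\alpha_{r-1}\to\alpha_r\to\alpha_{r-1}$ is carried to the two-cycle $\alpha_r\to\alpha_{r-1}\to\alpha_r$.
\end{itemize}
In every case Lemma~\ref{lem:voloshyn-cycle} applies directly.

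The only point that needs checking is the conjugation identity $\tau s_r\tau^{-1}=s_{r-1}$. It follows from $\tau(\alpha_r)=\alpha_{r-1}$, because $\tau s_\alpha\tau^{-1}=s_{\tau\alpha}$ for any root $\alpha$. There is no real obstacle.
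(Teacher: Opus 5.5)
Your proposal is correct and matches the paper's argument. The paper deduces this corollary in one line by conjugating Lemma~\ref{lem:spin-exclusion-c} with the diagram automorphism $\tau$, using $\tau s_r\tau^{-1}=s_{r-1}$, $\tau c_I\tau^{-1}=d_I$ and the $\tau$-invariance \eqref{eq:tau-rationality}; your transported loop and two-cycle certificates also agree with the $d$-rows of the paper's table \eqref{eq:complete-certificate-table}.
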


Combining the action and exclusion lemmas gives the promised one-step rigidity.

\begin{proposition}[One-step rigidity]
\label{prop:onestep-rigidity}
Let $I\subseteq[r-1]$, with $c_\varnothing=d_\varnothing=w_0$, and let $1\leq a\leq r$.  Then
\begin{align}
  s_ac_I\text{ is rational}
  &\Longleftrightarrow
  \bigl(a\leq r-2\text{ and }a+1\in I\bigr)
  \text{ or }a=r-1
  \text{ or }\bigl(a=r\text{ and }I=\varnothing\bigr),
  \label{eq:rational-neigh-c} \\
  s_ad_I\text{ is rational}
  &\Longleftrightarrow
  \bigl(a\leq r-2\text{ and }a+1\in I\bigr)
  \text{ or }a=r
  \text{ or }\bigl(a=r-1\text{ and }I=\varnothing\bigr).
  \label{eq:rational-neigh-d}
\end{align}
In every rational case the resulting element belongs to $\calF_r$.  More explicitly, the ordinary moves and the admissible spin moves are those in Lemma~\ref{lem:admissible-action-c} and its $d$-analogue, while
$s_{r-1}w_0=c_{\{r-1\}}, \qquad s_rw_0=d_{\{r-1\}}.$
Consequently, if $v\in\calF_r$ and $s_av$ is rational, then $s_av\in\calF_r$.
\end{proposition}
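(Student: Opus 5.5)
The proof is a case assembly from Lemmas~\ref{lem:admissible-action-c}, \ref{lem:ordinary-exclusion-c}, \ref{lem:spin-exclusion-c}, their $d$-analogues (Corollaries~\ref{cor:ordinary-exclusion-d} and \ref{cor:spin-exclusion-d}), and Proposition~\ref{prop:family-rational}. I will treat \eqref{eq:rational-neigh-c} first and then obtain \eqref{eq:rational-neigh-d} by conjugating with $\tau$. Under $\tau$, the families correspond via $\tau c_I\tau^{-1}=d_I$ and the simple reflections via $\tau s_{r-1}\tau^{-1}=s_r$, with $s_a$ fixed for $a\leq r-2$. By \eqref{eq:tau-rationality}, conjugation by $\tau$ preserves rationality. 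So $s_ad_I$ is rational exactly when $s_{\tau(a)}c_I$ is, where $\tau(a)$ swaps $r-1$ and $r$ and fixes the other indices. Substituting into \eqref{eq:rational-neigh-c} gives exactly \eqref{eq:rational-neigh-d}.

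For \eqref{eq:rational-neigh-c}, I split by the value of $a$.

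\emph{Case $1\leq a\leq r-2$.} If $a+1\in I$, then \eqref{eq:sa-cI-admissible} gives $s_ac_I=c_{I\symdiff\{a\}}\in\calF_r$ (possibly $w_0$ when $I=\{a+1\}$ is impossible, as $a\notin I$ would insert $a$, so the result is non-empty). Either way it is rational by Proposition~\ref{prop:family-rational}. If $a+1\notin I$, Lemma~\ref{lem:ordinary-exclusion-c} shows $s_ac_I$ is not rational; that lemma already covers $I=\varnothing$.

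\emph{Case $a=r-1$.} Here \eqref{eq:srminusone-cI} gives $s_{r-1}c_I=c_{I\symdiff\{r-1\}}$. This is rational and lies in $\calF_r$, and for $I=\varnothing$ it is $c_{\{r-1\}}$.

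\emph{Case $a=r$.} If $I\neq\varnothing$, Lemma~\ref{lem:spin-exclusion-c} excludes rationality. If $I=\varnothing$, I need $s_rw_0=d_{\{r-1\}}$, which I will check directly. First, $w_0$ negates $e_1,\ldots,e_{r-1}$ and fixes $e_r$. Then $s_r$ sends $e_{r-1}\mapsto-e_r$ and $e_r\mapsto-e_{r-1}$. So $s_rw_0$ sends $e_j\mapsto-e_j$ for $j<r-1$, sends $e_{r-1}\mapsto e_r$, and sends $e_r\mapsto-e_{r-1}$. This matches \eqref{eq:dI-intro} for $I=\{r-1\}$, where $p_I=(r-1\ r)$ and $i_1=i_k=r-1$. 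Alternatively, it is the $\tau$-image of $s_{r-1}w_0=c_{\{r-1\}}$, using $\tau w_0\tau^{-1}=w_0$. Hence $s_rw_0$ is rational.

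Together the three cases prove \eqref{eq:rational-neigh-c}. Every rational case exhibited the product explicitly as a member of $\calF_r$, so the final statement follows: if $v\in\calF_r$ and $s_av$ is rational, then $s_av\in\calF_r$. Here $v$ is written as $c_I$ or $d_I$, with $w_0=c_\varnothing=d_\varnothing$.

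No step is a real obstacle, because the substantive work sits in the cited lemmas. The only care needed is in the convention bookkeeping at $I=\varnothing$. Lemma~\ref{lem:spin-exclusion-c} requires $I\neq\varnothing$, and exactly this exception produces the clause $a=r$, $I=\varnothing$ (and $a=r-1$ for $d$). I will also confirm that the $\tau$-transfer of Lemma~\ref{lem:admissible-action-c} produces $d$-elements with the same index convention.
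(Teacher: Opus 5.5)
Your proposal is correct and matches the paper's proof: it uses the same case split on $a$ via Lemma~\ref{lem:admissible-action-c}, Lemma~\ref{lem:ordinary-exclusion-c} and Lemma~\ref{lem:spin-exclusion-c}, the same direct check $s_rw_0=d_{\{r-1\}}$, and the same transfer to the $d$-half by conjugation with $\tau$ (with $\tau w_0\tau^{-1}=w_0$). The parenthetical in your first case is garbled, but its point is right: $a+1\in I\symdiff\{a\}$, so the result is never $w_0$, and this is not even needed since $w_0$ is rational as well.
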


\begin{proof}
For $c_I$ and $a\leq r-2$, Lemma~\ref{lem:admissible-action-c} gives a rational family element when $a+1\in I$, whereas Lemma~\ref{lem:ordinary-exclusion-c} gives a loop when $a+1\notin I$.  The reflection $s_{r-1}$ is always admissible and satisfies $s_{r-1}c_I=c_{I\symdiff\{r-1\}}$.  If $I\neq\varnothing$, Lemma~\ref{lem:spin-exclusion-c} excludes $s_rc_I$; if $I=\varnothing$, direct calculation gives $s_rw_0=d_{\{r-1\}}$.  This proves \eqref{eq:rational-neigh-c}.  Conjugation by $\tau$ exchanges $c_I$ with $d_I$ and $s_{r-1}$ with $s_r$, proving \eqref{eq:rational-neigh-d}.  The displayed identities identify every rational neighbor as an element of $\calF_r$.
\end{proof}

\subsection{Local obstruction certificates}
\label{sec:certificates}

The induction proof in Section~\ref{sec:classification} needs only the one-step rigidity statement.  Nevertheless, it is useful to isolate the local obstruction certificates because they show that the exclusion is not a global enumeration phenomenon.  Each forbidden neighbouring element fails rationality for a visibly small reason: either a loop, or in one boundary case a two-cycle.

For an element $u\in W(D_r)$, call a positive root $\gamma$ a \emph{loop certificate} if
$u(\gamma)\in \Pi_+, \qquad u^{-1}(u(\gamma))=\gamma\leq u(\gamma).$
Equivalently, the vertex $u(\gamma)\in\nu_0(u)$ has a loop in $\Gamma_u$.  Similarly, call an ordered pair $(\gamma,\delta)$ of positive roots a \emph{two-cycle certificate} if
$u(\gamma),u(\delta)\in\Pi_+, \qquad \gamma\leq u(\delta), \qquad \delta\leq u(\gamma).$
Then $u(\gamma)\to u(\delta)\to u(\gamma)$ in $\Gamma_u$.

\begin{lemma}
\label{lem:certificate-nonrational}
If $u$ admits either a loop certificate or a two-cycle certificate, then $u$ is not rational.
\end{lemma}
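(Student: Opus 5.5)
The plan is to reduce both cases directly to the graph criterion, Lemma~\ref{lem:voloshyn-cycle}, which says that $u$ is rational exactly when $\Gamma_u$ has no directed cycle. For each certificate we exhibit a directed cycle of length one or two in $\Gamma_u$.

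\emph{Loop certificate.} Suppose $\gamma$ is a loop certificate and put $\alpha=u(\gamma)$.
\begin{itemize}
\item Membership: since $\gamma\in\Pi_+$ and $\alpha\in\Pi_+$, we have $\alpha\in u(\Pi_+)\cap\Pi_+=\nu_0(u)$, so $\alpha$ is a vertex of $\Gamma_u$.
\item Arrow: the hypothesis gives $u^{-1}(\alpha)=\gamma\leq\alpha$, which by Definition~\ref{def:rational} is precisely the arrow $\alpha\to\alpha$.
\item Conclusion: this loop is a directed cycle of length one, so $u$ is not rational. This is exactly the final sentence of Lemma~\ref{lem:voloshyn-cycle}.
\end{itemize}

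\emph{Two-cycle certificate.} Suppose $(\gamma,\delta)$ is a two-cycle certificate and put $\alpha=u(\gamma)$, $\beta=u(\delta)$.
\begin{itemize}
\item Membership: as before, both $\alpha$ and $\beta$ lie in $\nu_0(u)$, because $\gamma,\delta$ and their images are positive.
\item Arrows: the condition $\gamma\leq u(\delta)$ reads $u^{-1}(\alpha)\leq\beta$, giving the arrow $\alpha\to\beta$. Symmetrically, $\delta\leq u(\gamma)$ gives $\beta\to\alpha$.
\item Conclusion: $\alpha\to\beta\to\alpha$ is a closed directed walk. If $\alpha=\beta$ it is already a loop; in either case it contains a directed cycle, so Lemma~\ref{lem:voloshyn-cycle} shows that $u$ is not rational.
\end{itemize}

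There is no real obstacle here. The only point needing care is to confirm that the images $u(\gamma)$ and $u(\delta)$ are genuine vertices of $\Gamma_u$, and that membership in $\nu_0(u)$ requires positivity of both the root and its preimage. Both conditions are built into the definitions of the certificates.
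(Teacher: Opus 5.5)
Your proposal is correct and follows the paper's proof: both read each certificate as a directed cycle of length one or two in $\Gamma_u$ and then invoke Lemma~\ref{lem:voloshyn-cycle}. Your explicit check that the images lie in $\nu_0(u)$, and your remark on the degenerate case $u(\gamma)=u(\delta)$, spell out details the paper leaves implicit.
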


\begin{proof}
A loop certificate is a directed cycle of length one in $\Gamma_u$.  A two-cycle certificate is a directed cycle of length two.  Lemma~\ref{lem:voloshyn-cycle} identifies rationality with acyclicity of $\Gamma_u$, so either certificate excludes rationality.
\end{proof}

For ordinary non-admissible reflections, the obstruction is always a loop and there are only two local patterns.  Let $u=s_a c_I$ with $1\leq a\leq r-2$ and $a+1\notin I$.  If $I=\varnothing$ or $a\notin I$, then the source root and loop vertex are both $\alpha_a$.  If $a\in I$, put $m=n_I(a)$; then $m>a+1$, the source root is $e_a-e_{a+1}$, and the loop vertex is $e_a-e_m$.  The computations are summarized by
\begin{equation*}
\begin{array}{c|c|c|c}
 \text{case}&\gamma&u(\gamma)&\gamma\leq u(\gamma)\\
 \hline
 I=\varnothing&\alpha_a&\alpha_a&\alpha_a\leq\alpha_a\\
 I\neq\varnothing,\,a\notin I&\alpha_a&\alpha_a&\alpha_a\leq\alpha_a\\
 I\neq\varnothing,\,a\in I,\,a+1\notin I&e_a-e_{a+1}&e_a-e_{n_I(a)}&e_a-e_{a+1}\leq e_a-e_{n_I(a)}.
\end{array}
\end{equation*}
The last inequality is the interval containment
$[a,a+1)\subseteq [a,n_I(a))$
inside the root poset for roots $e_x-e_y$.

For the wrong spin reflection $u=s_r c_I$, $I\neq\varnothing$, there are three patterns.  If $r-1\notin I$, the source root is $e_{i_k}+e_r$ and the positive image is $e_{i_1}+e_{r-1}$.  If $r-1\in I$ but $I\neq\{r-1\}$, with $b=\max(I\setminus\{r-1\})$, the source root is $e_b+e_r$ and the positive image is $e_{i_1}+e_r$.  If $I=\{r-1\}$, the spin roots form a two-cycle.  In formula form,
\begin{equation*}
\begin{array}{c|c|c|c}
 \text{case}&\gamma&u(\gamma)&\text{certificate}\\
 \hline
 r-1\notin I&e_{i_k}+e_r&e_{i_1}+e_{r-1}&e_{i_k}+e_r\leq e_{i_1}+e_{r-1}\\
 r-1\in I,\ I\neq\{r-1\}&e_b+e_r&e_{i_1}+e_r&e_b+e_r\leq e_{i_1}+e_r\\
 I=\{r-1\}&(\alpha_{r-1},\alpha_r)&(\alpha_r,\alpha_{r-1})&\alpha_{r-1}\leftrightarrow\alpha_r.
\end{array}
\end{equation*}
Here the first inequality is \eqref{eq:plus-plus-order-2}, the second is \eqref{eq:plus-plus-order-1}, and the last entry abbreviates the two arrows in~\eqref{eq:spin-two-cycle-arrows}.

Applying $\tau$ gives the complete obstruction catalogue for the $d$-half.  Thus every forbidden move from a vertex of $\calF_r$ is detected by one of the following positive roots:
\begin{equation}
\begin{array}{l|l}
 \text{forbidden element} & \text{loop vertex \(u(\gamma)\) or two-cycle pair}\\
 \hline
 \substack{s_a c_I,\;1\leq a\leq r-2\\ a+1\notin I}
 &\alpha_a\text{ or }e_a-e_{n_I(a)}\\
 \substack{s_r c_I\\ I\neq\varnothing}
 &e_{i_1}+e_{r-1},\ e_{i_1}+e_r,\text{ or }(\alpha_{r-1},\alpha_r)\\
 \substack{s_a d_I,\;1\leq a\leq r-2\\ a+1\notin I}
 &\tau(\alpha_a)\text{ or }\tau(e_a-e_{n_I(a)})\\
 \substack{s_{r-1}d_I\\ I\neq\varnothing}
 &\tau(e_{i_1}+e_{r-1}),\ \tau(e_{i_1}+e_r),\text{ or }(\alpha_r,\alpha_{r-1}).
\end{array}
\label{eq:complete-certificate-table}
\end{equation}
The notation in~\eqref{eq:complete-certificate-table} is deliberately redundant.  In the ordinary $d$-case, $\tau$ fixes $\alpha_a$ and fixes $e_a-e_{n_I(a)}$ unless $n_I(a)=r$, in which case $\tau(e_a-e_r)=e_a+e_r$; in the spin case it interchanges the two spin nodes.

\begin{remark}
\label{rem:certificate-strength}
The certificate catalogue is stronger than the statement that the excluded elements are outside the family.  For instance, a signed permutation could in principle leave the set $\calF_r$ but remain rational; Proposition~\ref{prop:onestep-rigidity} rules this out locally by displaying an actual directed cycle in the graph $\Gamma_u$.  This local obstruction is what makes the induction in Theorem~\ref{thm:main-classification} possible: Lemma~\ref{lem:voloshyn-descent} moves an arbitrary rational element upward by one unit of Coxeter length, and the catalogue prevents the descent path from leaving the signed cyclic family.
\end{remark}

\subsection{Classification and graph structure}
\label{sec:classification}

We can now prove the classification theorem.  Let
$N=\ell(w_0)=\#\Pi_+=r(r-1).$
Recall that $w_0$ is the unique element of length $N$.

\begin{proof}[Proof of Theorem~\ref{thm:main-classification}]
Proposition~\ref{prop:family-rational} gives
$\calF_r\subseteq \set{u\in W(D_r):u\text{ rational}}.$
We prove the reverse inclusion by induction on
$\delta(u)=N-\ell(u).$
If $\delta(u)=0$, then $u=w_0\in\calF_r$.  Suppose $\delta(u)>0$ and that every rational element $v$ with $\delta(v)<\delta(u)$ lies in $\calF_r$.  By Lemma~\ref{lem:voloshyn-descent}, there is a simple reflection $s_a$ such that
$v=s_a u$
is rational and
$\ell(v)=\ell(u)+1, \qquad \delta(v)=\delta(u)-1.$
By the induction hypothesis, $v\in\calF_r$.  Since $u=s_a v$ is rational, Proposition~\ref{prop:onestep-rigidity} implies $u\in\calF_r$.  Hence every rational element lies in $\calF_r$.

The count \eqref{eq:main-count} now follows from Lemma~\ref{lem:faithful-parametrization}: the $2^{r-1}-1$ non-empty subsets of $[r-1]$ give pairwise distinct elements $c_I$ and $d_I$, the two indexed families are disjoint, and none of their elements equals $w_0$.

It remains only to identify the edges.  By definition, two rational elements are adjacent in $\Gamma(D_r)$ if one is obtained from the other by left multiplication by a simple reflection.  Proposition~\ref{prop:onestep-rigidity} lists all such rational left multiplications, and Lemma~\ref{lem:admissible-action-c} identifies their images.  This gives exactly \eqref{eq:main-c-horizontal}--\eqref{eq:main-spin-adj}.  No additional edges exist.

For the valency statement, use \eqref{eq:main-c-horizontal}--\eqref{eq:main-spin-adj}.  If $I\neq\varnothing$, then
\begin{equation}
  \deg(c_I)=1+\#(I\cap\{2,3,\ldots,r-1\}),
  \qquad
  \deg(d_I)=1+\#(I\cap\{2,3,\ldots,r-1\}).
  \label{eq:degree-formula}
\end{equation}
The additional $1$ is the spin edge toggling $r-1$, and each element $a+1\in I$ with $1\leq a\leq r-2$ contributes the edge toggling $a$.  Thus $\deg(c_I)=1$ or $\deg(d_I)=1$ if and only if $I=\{1\}$.  Finally
$\deg(w_0)=2,$
because $w_0$ is adjacent only to $c_{\{r-1\}}$ and $d_{\{r-1\}}$.  Therefore the only valency-one vertices are $c_{\{1\}}$ and $d_{\{1\}}$.
\end{proof}

\begin{corollary}
\label{cor:voloshyn-count}
For $r\geq 5$ odd, Voloshyn's type-$D$ counting conjecture holds:
$\#\{u\in W(D_r):u\text{ rational}\}=2^r-1.$
Moreover, the two valency-one elements constructed in \cite[Proposition 4.15]{Voloshyn2026} are the only valency-one vertices of $\Gamma(D_r)$.
\end{corollary}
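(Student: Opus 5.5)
This corollary is a repackaging of Theorem~\ref{thm:main-classification}, so the plan is to read both assertions off that theorem and then match our explicit elements with Voloshyn's.

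For the count, take $r\geq5$ odd. Theorem~\ref{thm:main-classification} identifies the rational elements with $\calF_r$. Lemma~\ref{lem:faithful-parametrization} shows that $\calF_r$ has exactly $1+2(2^{r-1}-1)=2^r-1$ pairwise distinct elements. No further work is needed: the conjectured formula \eqref{eq:intro-count-conj} is precisely \eqref{eq:main-count}.

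For the valency statement, the theorem's final clause already says that $c_{\{1\}}$ and $d_{\{1\}}$ are the only vertices of valency one. This comes from the degree formula \eqref{eq:degree-formula} together with $\deg(w_0)=2$. The remaining task is to identify these two vertices with the elements of \cite[Proposition 4.15]{Voloshyn2026}. First, $c_{\{1\}}$ is Voloshyn's element $C$, since its $\nu_0,\nu_1,\nu_2$ computed after Proposition~\ref{prop:family-rational} agree with the ones in \cite[Proposition 4.14]{Voloshyn2026}. Next, check $d_{\{1\}}=c_{\{1\}}^{-1}$ directly in signed one-line notation. From \eqref{eq:cI-intro}, $c_{\{1\}}$ sends $e_1\mapsto -e_r$, $e_r\mapsto e_1$, and $e_j\mapsto -e_j$ for $1<j<r$. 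Its inverse therefore sends $e_r\mapsto -e_1$, $e_1\mapsto e_r$, and $e_j\mapsto -e_j$ for $1<j<r$. This agrees with $d_{\{1\}}$ from \eqref{eq:dI-intro} with $i_1=i_k=1$.

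Hence the two valency-one elements constructed in \cite[Proposition 4.15]{Voloshyn2026}, namely $C$ and its inverse (equivalently $C$ and its $\tau$-conjugate), are exactly $c_{\{1\}}$ and $d_{\{1\}}$. The only step needing care is this identification with the external reference's notation. If Voloshyn's second element is described as $\tau C\tau^{-1}$ rather than $C^{-1}$, the identity $d_I=\tau c_I\tau^{-1}$ from \eqref{eq:tau-conjugation} gives the same conclusion.
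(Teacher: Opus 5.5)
Your proposal is correct and takes essentially the paper's route. The corollary is read off from Theorem~\ref{thm:main-classification}: the count comes from Lemma~\ref{lem:faithful-parametrization}, and the valency statement from \eqref{eq:degree-formula} together with $\deg(w_0)=2$. You then use the identification $c_{\{1\}}=C$ and $d_{\{1\}}=c_{\{1\}}^{-1}=\tau c_{\{1\}}\tau^{-1}$, and your one-line signed-permutation computation of that identification is correct. Your appeal to matching $\nu$-sequences to recognise $C$ is legitimate, since $\nu_0(u)=\Pi_+\setminus\Inv(u^{-1})$ already determines $u$, but comparing the signed permutations directly would be cleaner.
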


\begin{corollary}
\label{cor:two-halves}
The graph $\Gamma(D_r)$ is the union of the two isomorphic induced subgraphs $\Gamma_c=\Gamma(D_r)[\{w_0\}\cup\{c_I:I\neq\varnothing\}]$ and $\Gamma_d=\Gamma(D_r)[\{w_0\}\cup\{d_I:I\neq\varnothing\}]$, which intersect exactly in $w_0$.  The isomorphism $\Gamma_c\to\Gamma_d$ is induced by $\tau$.
\end{corollary}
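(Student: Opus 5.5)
We need to prove Corollary~\ref{cor:two-halves}: $\Gamma(D_r)$ is the union of the two induced subgraphs $\Gamma_c$ and $\Gamma_d$, these meet only in $w_0$, and $\tau$ induces an isomorphism $\Gamma_c\to\Gamma_d$.

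The plan is to read everything off Theorem~\ref{thm:main-classification} together with the conjugation identities \eqref{eq:tau-conjugation}. The argument has three steps: vertices, edges, and the isomorphism.

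\emph{Vertices.} By the classification, the vertex set of $\Gamma(D_r)$ is $\calF_r$. This is the union of the vertex sets of $\Gamma_c$ and $\Gamma_d$. Lemma~\ref{lem:faithful-parametrization} shows that $c_I\neq d_J$ for all non-empty $I,J$, and that no $c_I$ or $d_J$ equals $w_0$. Hence the two vertex sets intersect exactly in $\{w_0\}$.

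\emph{Edges.} I will check that every edge of $\Gamma(D_r)$ lies inside one of the two induced subgraphs. By the adjacency rules \eqref{eq:main-c-horizontal}--\eqref{eq:main-spin-adj} (equivalently Proposition~\ref{prop:onestep-rigidity}), each edge joins either $c_I$ to $c_{I'}$ or $d_I$ to $d_{I'}$, with the convention $c_\varnothing=d_\varnothing=w_0$. An edge of the first kind has both endpoints in the vertex set of $\Gamma_c$, and one of the second kind has both endpoints in the vertex set of $\Gamma_d$. There are no edges from some $c_I$ to some $d_J$ with both $I,J$ non-empty, since the theorem states that no other edges exist. So $\Gamma(D_r)=\Gamma_c\cup\Gamma_d$ as graphs.

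\emph{Isomorphism.} Define $\phi(u)=\tau u\tau^{-1}$. By \eqref{eq:tau-conjugation}, $\phi(c_I)=d_I$. Also $\phi(w_0)=w_0$, because $\tau w_0\tau^{-1}$ is again the longest element: conjugation by a diagram automorphism preserves length. Alternatively, $w_0$ acts as $-1$ on $e_1,\ldots,e_{r-1}$ and fixes $e_r$, so it commutes with $\tau$. Thus $\phi$ is a bijection from the vertex set of $\Gamma_c$ onto that of $\Gamma_d$. For adjacency, note that $v=s_au$ holds if and only if $\phi(v)=\phi(s_a)\phi(u)$, and $\phi(s_a)$ is again a simple reflection by \eqref{eq:tau-conjugation}. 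Rationality is preserved by $\phi$ by \eqref{eq:tau-rationality}. Hence $\phi$ maps edges to edges, and so does $\phi^{-1}=\phi$. Concretely, the edge $c_I\sim c_{I\symdiff\{a\}}$ for $a\le r-2$ goes to $d_I\sim d_{I\symdiff\{a\}}$, and the spin edge via $s_{r-1}$ goes to the spin edge via $s_r$, matching the listed rules.

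No step is a real obstacle. The only point needing care is the convention at $w_0$: we must confirm that $\phi$ fixes $w_0$, so that the isomorphism is compatible with the gluing point.
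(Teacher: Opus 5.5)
Your proposal is correct and follows the paper's proof: the vertex sets meet only in $w_0$ by Lemma~\ref{lem:faithful-parametrization}, the adjacency rules of Theorem~\ref{thm:main-classification} keep every edge inside one indexed family, and conjugation by $\tau$ via \eqref{eq:tau-conjugation} gives the isomorphism. You also make explicit two points the paper leaves implicit: that $\tau w_0\tau^{-1}=w_0$, and that conjugation by $\tau$ sends simple reflections to simple reflections, so it carries edges to edges.
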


\begin{proof}
Lemma~\ref{lem:faithful-parametrization} shows that the two vertex sets intersect only at $w_0$.  The adjacency rules show that every edge other than the two edges incident with $w_0$ remains within one indexed family, and \eqref{eq:tau-conjugation} gives the asserted isomorphism.
\end{proof}

\section{Combinatorial consequences}
\label{sec:recognition}

The classification can be reformulated as an intrinsic test on signed one-line notation.  This is useful because the definition of rationality refers to the whole root-poset graph, whereas the theorem reduces the decision problem to the cycle structure and signs of a signed permutation.

Let $u\in W(D_r)$ be written as
\begin{equation}
  u(e_j)=\varepsilon_j e_{\pi(j)},
  \qquad
  \varepsilon_j\in\{\pm1\},
  \qquad
  \pi\in S_r.
  \label{eq:signed-one-line}
\end{equation}
For a non-empty subset $I=\{i_1<\cdots<i_k\}\subseteq [r-1]$, the two cyclic families have the same unsigned permutation
$\pi=p_I=(i_1\ i_2\ \cdots\ i_k\ r)$
where the cycle is read in the source-to-target direction $j\mapsto p_I(j)$.  Thus the non-trivial orbit of $\pi$ is exactly $I\cup\{r\}$ and its entries appear in increasing order before returning from $r$ to $i_1$.

\begin{proposition}
\label{prop:recognition}
A signed permutation $u$ in~\eqref{eq:signed-one-line} is rational if and only if one of the following mutually exclusive alternatives holds:
\begin{enumerate}
\item $\pi=\mathrm{id}$ and
\begin{equation}
  (\varepsilon_1,\ldots,\varepsilon_r)=(-1,\ldots,-1,+1),
  \label{eq:recognition-w0}
\end{equation}
so $u=w_0$;
\item there is a non-empty $I=\{i_1<\cdots<i_k\}\subseteq [r-1]$ such that $\pi=p_I$ and
\begin{equation}
  \varepsilon_j=-1\quad(1\leq j\leq r-1),
  \qquad
  \varepsilon_r=+1,
  \label{eq:recognition-c}
\end{equation}
so $u=c_I$;
\item there is a non-empty $I=\{i_1<\cdots<i_k\}\subseteq [r-1]$ such that $\pi=p_I$ and
\begin{equation}
  \varepsilon_{i_k}=+1,
  \qquad
  \varepsilon_j=-1\quad(j\neq i_k),
  \label{eq:recognition-d}
\end{equation}
so $u=d_I$.
\end{enumerate}
\end{proposition}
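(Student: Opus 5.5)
The plan is to read Proposition~\ref{prop:recognition} off Theorem~\ref{thm:main-classification}. By that theorem, $u$ is rational exactly when $u\in\calF_r=\{w_0\}\cup\{c_I,d_I:\varnothing\neq I\subseteq[r-1]\}$. So it suffices to prove that each of the three alternatives is exactly the condition $u=w_0$, $u=c_I$, or $u=d_I$ in the notation \eqref{eq:signed-one-line}, and that no $u$ satisfies two of them. No root-poset argument is needed beyond the classification.

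First, I record the signed one-line data of each family element from the definitions. For $w_0$, the description $w_0(e_i)=-e_i$ for $i<r$ and $w_0(e_r)=e_r$ gives $\pi=\mathrm{id}$ together with the sign vector \eqref{eq:recognition-w0}. For $c_I$, formula \eqref{eq:cI-intro} gives $\pi=p_I$ with signs $-1$ at every $j\leq r-1$ and $+1$ at $r$; this is \eqref{eq:recognition-c}. For $d_I$, formula \eqref{eq:dI-intro} gives $\pi=p_I$, sign $+1$ at $i_k$, and sign $-1$ elsewhere, including at $r$; this is \eqref{eq:recognition-d}. Since a signed permutation is determined by its pair $(\pi,\varepsilon)$, each alternative is equivalent to membership of $u$ in the corresponding part of $\calF_r$. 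Both directions of the ``if and only if'' then follow from the classification.

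Next comes mutual exclusivity. Alternative (1) has $\pi=\mathrm{id}$, whereas (2) and (3) have $\pi=p_I$, which is non-trivial because $I\neq\varnothing$. Alternatives (2) and (3) differ in the sign at $r$: it is $+1$ in (2) and $-1$ in (3), because $i_k\neq r$. Within (2) or (3), the subset $I$ is uniquely determined by $\pi$, since the non-trivial orbit of $p_I$ is $I\cup\{r\}$. This is the content of Lemma~\ref{lem:faithful-parametrization}. I will also note, for the reader, that every sign vector appearing has exactly $r-1$ negative entries, which is even, so the test is consistent with Lemma~\ref{lem:family-in-W}.

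I expect no real obstacle. The only point needing care is to read the sign convention of \eqref{eq:signed-one-line} as attached to the source index $j$, matching the convention of \eqref{eq:cI-intro} and \eqref{eq:dI-intro}, rather than to the target. With the wrong reading, the $d_I$ sign would appear at position $r$ rather than at $i_k$.
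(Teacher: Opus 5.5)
Your proposal is correct and matches the paper's proof: both directions are read off Theorem~\ref{thm:main-classification} by writing $w_0$, $c_I$ and $d_I$ in the data $(\pi,\varepsilon)$ of \eqref{eq:signed-one-line}. Mutual exclusivity then follows from $\pi=\mathrm{id}$ versus $\pi=p_I\neq\mathrm{id}$, and from $\varepsilon_r=+1$ in \eqref{eq:recognition-c} versus $\varepsilon_r=-1$ in \eqref{eq:recognition-d}; your extra remarks on the source-index sign convention and on uniqueness of $I$ via Lemma~\ref{lem:faithful-parametrization} are accurate but not needed.
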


\begin{proof}
The forward implication is Theorem~\ref{thm:main-classification}, written in the notation of \eqref{eq:signed-one-line}.  Conversely, each of the three alternatives gives exactly one of the elements listed in Theorem~\ref{thm:main-classification}; hence it is rational.  The alternatives are mutually exclusive because $w_0$ has trivial unsigned permutation, while $p_I$ is non-trivial for $I\neq\varnothing$, and the sign vector in~\eqref{eq:recognition-c} differs from the sign vector in~\eqref{eq:recognition-d}: in the former $\varepsilon_r=+1$, while in the latter $\varepsilon_r=-1$.
\end{proof}

The proposition gives a linear-time recognition procedure once the cycle of $\pi$ containing $r$ has been extracted.  Let $O_r(\pi)=\{r,\pi(r),\pi^2(r),\ldots\}$ be the $\pi$-orbit of $r$.  If $O_r(\pi)=\{r\}$, rationality is possible only for $w_0$.  Otherwise write $O_r(\pi)\setminus\{r\}=\{i_1<\cdots<i_k\}$.  Then $\pi=p_I$ if and only if $\pi(i_j)=i_{j+1}$ for $1\leq j<k$, $\pi(i_k)=r$, $\pi(r)=i_1$, and $\pi(j)=j$ for every $j\notin I\cup\{r\}$.  The sign tests \eqref{eq:recognition-c} and \eqref{eq:recognition-d} then distinguish the two families.  Equivalently, $u$ is rational precisely when it is $w_0$, or when $\pi=p_I$ for a nonempty $I$ and the sign vector is either $(-,\ldots,-,+)_r$ or has its unique positive entry at the source $\max I$.  Here $(-,\ldots,-,+)_r$ denotes the sign vector with its unique positive entry at the source $r$.

No sorting is required in an implementation: one may instead check $\pi(r)<\pi^2(r)<\cdots<\pi^k(r)<r$ and $\pi^{k+1}(r)=r$, together with the condition that every point outside this orbit is fixed.  Equivalently, when the unique non-trivial cycle is written from its smallest non-$r$ entry, it must have the form $(i_1\ i_2\ \cdots\ i_k\ r)$.

\begin{corollary}
\label{cor:no-extra-signs}
Let $u\in W(D_r)$ be rational and let $\pi$ be its unsigned permutation.  If $\pi\neq\mathrm{id}$, then $\pi$ has exactly one non-trivial cycle, that cycle contains $r$, and all entries outside the cycle are fixed with negative sign.  Moreover $u$ has exactly one positive sign.
\end{corollary}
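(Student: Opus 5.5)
The plan is to read this corollary off from Proposition~\ref{prop:recognition}, which is available because it follows from Theorem~\ref{thm:main-classification}. Let $u$ be rational with unsigned permutation $\pi\neq\mathrm{id}$. Alternative~(1) of Proposition~\ref{prop:recognition} forces $\pi=\mathrm{id}$, so it is excluded. Hence $u=c_I$ or $u=d_I$ for some non-empty $I=\{i_1<\cdots<i_k\}\subseteq[r-1]$, and in both cases $\pi=p_I$.

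By \eqref{eq:pI-intro}, $p_I$ is the single cycle $(i_1\ \cdots\ i_k\ r)$ and fixes every $j\notin I\cup\{r\}$. So there is exactly one non-trivial cycle, and it contains $r$. Its length is $k+1\geq2$, so it is genuinely non-trivial.

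For the signs, a point outside the cycle is some $j\notin I\cup\{r\}$. In particular $j\leq r-1$ and $j\neq i_k$. In the $c_I$ case \eqref{eq:recognition-c} gives $\varepsilon_j=-1$ for every $j\leq r-1$. In the $d_I$ case \eqref{eq:recognition-d} gives $\varepsilon_j=-1$ for every $j\neq i_k$. Either way, every fixed point carries a negative sign.

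For the count of positive signs, the vector in \eqref{eq:recognition-c} has its only $+1$ at position $r$. The vector in \eqref{eq:recognition-d} has its only $+1$ at position $i_k$. So $u$ has exactly one positive sign in both cases.

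There is no real obstacle here. The only point needing care is that a fixed point $j$ can never be $i_k$ or $r$, which is why the unique positive sign always lies on the cycle rather than on a fixed point.
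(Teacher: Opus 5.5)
Your proposal is correct and is essentially the paper's proof. Both arguments exclude alternative~(1) of Proposition~\ref{prop:recognition} and then read the cycle structure and sign vectors of $c_I$ and $d_I$ directly from \eqref{eq:recognition-c} and \eqref{eq:recognition-d}; if the corollary's final sentence is read as covering every rational $u$, the case $u=w_0$ is equally immediate from \eqref{eq:recognition-w0}, whose only positive entry is $\varepsilon_r$.
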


\begin{proof}
This is immediate from Proposition~\ref{prop:recognition}.  For $c_I$, the unique positive sign is at the source $r$.  For $d_I$, the unique positive sign is at the source $\max I$.  Outside $I\cup\{r\}$, the underlying permutation is fixed and the sign is negative in both cases.
\end{proof}

\begin{corollary}
\label{cor:unsigned-count}
The unsigned permutations that occur among rational elements are precisely
$\mathrm{id} \quad\text{and}\quad p_I\quad(\varnothing\neq I\subseteq [r-1]).$
There are $2^{r-1}$ such unsigned permutations.  The identity has one rational lift, and every non-identity $p_I$ has exactly two rational signed lifts.
\end{corollary}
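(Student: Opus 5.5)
This corollary is a direct read-off from Proposition~\ref{prop:recognition}, equivalently from Theorem~\ref{thm:main-classification}. The plan is to list the rational elements by their unsigned permutations, count the distinct permutations, and then count the signed lifts of each one.

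\emph{Which permutations occur.} The rational elements are $w_0$, $c_I$ and $d_I$ for non-empty $I\subseteq[r-1]$. Their unsigned permutations are $\mathrm{id}$ and $p_I$, because $c_I$ and $d_I$ share the underlying permutation $p_I$. This gives the list of occurring unsigned permutations.

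\emph{Counting them.} By Lemma~\ref{lem:faithful-parametrization}, the assignment $I\mapsto p_I$ is injective on non-empty subsets. Moreover $p_I\neq\mathrm{id}$, since the orbit of $r$ under $p_I$ is $I\cup\{r\}$, which is non-trivial. So the number of distinct unsigned permutations is $1+(2^{r-1}-1)=2^{r-1}$.

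\emph{Counting lifts.} For $\pi=\mathrm{id}$, Proposition~\ref{prop:recognition} shows that only alternative (1) applies, so the sign vector is forced and $w_0$ is the unique rational lift. For $\pi=p_I$ with $I\neq\varnothing$, the set $I$ is determined by $\pi$ as the non-trivial orbit minus $r$. Then alternatives (2) and (3) each prescribe exactly one sign vector, and these two vectors differ at $\varepsilon_r$. Hence $c_I\neq d_I$ (also recorded in Lemma~\ref{lem:faithful-parametrization}), and $p_I$ has exactly two rational signed lifts.

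There is no real obstacle here. The only point needing care is that a given $p_I$ cannot arise from two different index sets, and injectivity of $I\mapsto p_I$ already settles this.
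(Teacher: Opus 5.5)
Your proposal is correct and follows essentially the same route as the paper: both read the list of unsigned permutations, their count $1+(2^{r-1}-1)$, and the number of lifts directly off Proposition~\ref{prop:recognition}. Your explicit appeal to the injectivity of $I\mapsto p_I$ and to $p_I\neq\mathrm{id}$ from Lemma~\ref{lem:faithful-parametrization} just makes explicit what the paper's shorter proof leaves implicit.
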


\begin{proof}
The first statement is the unsigned part of Proposition~\ref{prop:recognition}.  Since there are $2^{r-1}-1$ non-empty subsets of $[r-1]$, the total number of unsigned permutations is $1+(2^{r-1}-1)=2^{r-1}$.  The lift count is exactly the distinction between \eqref{eq:recognition-w0}, \eqref{eq:recognition-c}, and \eqref{eq:recognition-d}.
\end{proof}

This recognition theorem is often the shortest route to applying the main result.  Instead of constructing $\Gamma_u$ for an arbitrary signed permutation $u$, one checks whether the unsigned permutation is an increasing cycle through the distinguished coordinate $r$, and then checks whether the sign vector is one of the two allowed vectors.  The exclusion lemmas in Section~\ref{sec:rigidity} prove the corresponding local statement: every forbidden simple left multiplication from a vertex of $\calF_r$ is detected by a loop or a two-cycle in the root-poset graph.  The global recognition criterion itself follows from the classification theorem.

\subsection{Lengths and defect distribution}
\label{sec:lengths}

Although the classification proof above does not require closed length formulas for $c_I$ and $d_I$, the formulas clarify the shape of the graph and provide a useful check on the induction.  We include them because they make the subset parametrization completely transparent.

For $I=\{i_1<\cdots<i_k\}\neq\varnothing$, define the gap lengths
$g_a=n_I(i_a)-i_a-1 \qquad(1\leq a\leq k),$
where $n_I(i_k)=r$.  The gaps start at $i_1$, not at $1$, and therefore
$g_a=\#\{b:i_a<b<n_I(i_a)\}, \qquad \sum_{a=1}^k g_a=r-i_1-k.$
The set $A_I$ has cardinality
$\#A_I=\sum_{a=1}^k g_a=r-i_1-k,$
whereas
$\#B_I=r-i_1.$
Hence
$\#\nu_0(c_I)=\#A_I+\#B_I=2r-2i_1-k.$
Since $\ell(u)=\#\{\beta\in\Pi_+:u(\beta)\in\Pi_-\}$ and $\#\Pi_+=r(r-1)$, this gives
\begin{equation}
  \ell(c_I)=r(r-1)-(2r-2i_1-k).
  \label{eq:length-cI}
\end{equation}
The same formula holds for $d_I$ by $\tau$-symmetry:
$\ell(d_I)=\ell(c_I).$
For $I=\{1\}$, one obtains
$\ell(c_{\{1\}})=\ell(d_{\{1\}})=r(r-1)-(2r-3),$
which agrees with Voloshyn's formula
$\frac{1}{2}\bigl(r(r-1)+(r-2)(r-3)\bigr) =r(r-1)-(2r-3)$
for $r$ odd.

It is cleaner to discuss length changes in terms of the defect
\begin{equation}
  \delta(I)=r(r-1)-\ell(c_I)=2r-2\min I-\#I
  \qquad(I\neq\varnothing),
  \label{eq:defect-I}
\end{equation}
with the convention $\delta(\varnothing)=0$ for $w_0$.  If $J=I\symdiff\{a\}$ is an admissible ordinary toggle, then $\ell(c_J)-\ell(c_I)=\delta(I)-\delta(J)=2(\min J-\min I)+(\#J-\#I)$; the same formula holds for $d_I$.  This identity records all subcases: toggling the minimum has the opposite length effect from toggling an interior point.  For the spin toggle $a=r-1$ one obtains
\begin{equation*}
  \ell(c_{I\symdiff\{r-1\}})-\ell(c_I)
  =\begin{cases}
     -1, & I=\varnothing,\\[1mm]
     +1, & I\neq\varnothing\text{ and } r-1\notin I,\\[1mm]
     -1, & r-1\in I\text{ and }I\neq\{r-1\},\\[1mm]
     +1, & I=\{r-1\},
   \end{cases}
\end{equation*}
where the first and last lines use $c_\varnothing=w_0$.  The corresponding formulas for $d_I$ are identical.

The defect distribution has a closed form.  Let
$F_r(q)=\sum_{u\text{ rational}} q^{r(r-1)-\ell(u)}.$
Then Theorem~\ref{thm:main-classification} and \eqref{eq:defect-I} give
\begin{align}
  F_r(q)
  &=1+2\sum_{\varnothing\neq I\subseteq [r-1]}q^{2r-2\min I-\#I}
  \notag\\
  &=1+2\sum_{m=1}^{r-1}\sum_{J\subseteq\{m+1,\ldots,r-1\}}
       q^{2r-2m-1-\#J}
  \notag\\
  &=1+2\sum_{n=0}^{r-2}q^{n+1}(1+q)^n.
  \label{eq:defect-polynomial}
\end{align}
Evaluating at $q=1$ recovers the count,
$F_r(1)=1+2\sum_{n=0}^{r-2}2^n=2^r-1.$
Thus the formula refines the cardinality by Coxeter length.

\begin{corollary}[length layers]
\label{cor:length-layers}
Let $N=r(r-1)$ and define
\begin{equation}
  a_t=\sum_{n=\lceil (t-1)/2\rceil}^{\min(t-1,r-2)}
       \binom{n}{t-n-1}
  \qquad(t\geq 1),
  \label{eq:at-def}
\end{equation}
with the convention that an empty sum is zero.  Then
\begin{equation}
  \#\{u\text{ rational}:N-\ell(u)=t\}
  =\begin{cases}
     1,&t=0,\\[1mm]
     2a_t,&t\geq1.
   \end{cases}
  \label{eq:length-layer-size}
\end{equation}
In particular, the possible nonzero defects are precisely $1,2,\ldots,2r-3$.
\end{corollary}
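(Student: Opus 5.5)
The plan is to read the layer counts directly off the generating function \eqref{eq:defect-polynomial}. By Theorem~\ref{thm:main-classification} and \eqref{eq:defect-I}, the number of rational $u$ with $N-\ell(u)=t$ is the coefficient of $q^t$ in $F_r(q)=1+2\sum_{n=0}^{r-2}q^{n+1}(1+q)^n$. The constant term $1$ is the contribution of $w_0$, which is the only element of defect $0$. Every summand $q^{n+1}(1+q)^n$ has lowest degree $n+1\geq1$, so for $t\geq1$ the count is twice the coefficient of $q^t$ in $\sum_{n=0}^{r-2}q^{n+1}(1+q)^n$.

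Next I expand each summand binomially:
\[
q^{n+1}(1+q)^n=\sum_{j=0}^n\binom{n}{j}q^{n+1+j}.
\]
The coefficient of $q^t$ is therefore $\binom{n}{t-n-1}$, and it is nonzero exactly when $0\leq t-n-1\leq n$, that is, when $\tfrac{t-1}{2}\leq n\leq t-1$. Intersecting this with the range $0\leq n\leq r-2$ of the outer sum gives $\lceil (t-1)/2\rceil\leq n\leq\min(t-1,r-2)$. The lower bound $n\geq0$ is automatic for $t\geq1$. Summing over these $n$ gives exactly $a_t$ in \eqref{eq:at-def}, which proves \eqref{eq:length-layer-size}. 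Outside this window the binomial coefficients vanish, which justifies the empty-sum convention.

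For the final sentence I determine when $a_t>0$. Every term in \eqref{eq:at-def} is a positive binomial coefficient, so $a_t>0$ if and only if the index range is nonempty, i.e. $\lceil (t-1)/2\rceil\leq\min(t-1,r-2)$. The inequality $\lceil (t-1)/2\rceil\leq t-1$ holds for all $t\geq1$. The inequality $\lceil (t-1)/2\rceil\leq r-2$ is equivalent to $(t-1)/2\leq r-2$, i.e. $t\leq 2r-3$. Hence the nonzero defects are exactly $1,\ldots,2r-3$.

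As a cross-check I can look directly at \eqref{eq:defect-I}. The maximum defect $2r-3$ is attained at $I=\{1\}$, and the minimum defect $1$ is attained at $I=\{r-1\}$. There is no real obstacle here. The only care needed is the ceiling and minimum bookkeeping, in particular confirming that the upper limit $r-2$ really comes from the range of the outer sum and not from the binomial support.
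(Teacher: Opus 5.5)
Your proof is correct and follows the paper's route: read the coefficient of $q^t$ off the binomial expansion of $q^{n+1}(1+q)^n$ in \eqref{eq:defect-polynomial}, with the factor $2$ coming from the two lifts $c_I,d_I$ and with $w_0$ as the unique element of defect zero. The one small difference is in the defect range. You check that the summation window in \eqref{eq:at-def} is nonempty, each term being a positive binomial coefficient, whereas the paper shows that the supports $[n+1,2n+1]$ of the summands cover $[1,2r-3]$ without gaps; the two arguments are equivalent.
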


\begin{proof}
In \eqref{eq:defect-polynomial}, the summand $q^{n+1}(1+q)^n$ contributes
$\binom{n}{j}q^{n+1+j} \qquad(0\leq j\leq n).$
Thus the coefficient of $q^t$ in the one-half polynomial
$P_r(q)=\sum_{\varnothing\neq I\subseteq [r-1]}q^{N-\ell(c_I)}$
is the sum of $\binom{n}{t-n-1}$ over all $n$ such that $0\leq t-n-1\leq n$ and $0\leq n\leq r-2$.  These inequalities are exactly the bounds in~\eqref{eq:at-def}.  The factor $2$ in~\eqref{eq:length-layer-size} comes from the two lifts $c_I,d_I$, while the unique defect-zero element is $w_0$.  For fixed $n$, the polynomial $q^{n+1}(1+q)^n$ has strictly positive coefficients in every degree from $n+1$ through $2n+1$.  Hence its support is the integer interval $[n+1,2n+1]$.  As $n$ runs from $0$ to $r-2$, these intervals have no gaps: the interval for $n=0$ is $[1,1]$, and for $n\geq1$ the interval $[n+1,2n+1]$ begins no later than one integer after the preceding endpoint $2n-1$, because $n+1\leq2n$.  Their union is therefore $[1,2r-3]\cap\mathbb Z$, proving the asserted defect range.  The subsets $I=\{r-1\}$ and $I=\{1\}$ realize the two endpoints.
\end{proof}

\begin{corollary}[extreme rational elements]
\label{cor:extreme-lengths}
Among rational elements of $W(D_r)$, the unique element of length $N$ is $w_0$.  The rational elements of length $N-1$ are exactly
$c_{\{r-1\}}, \qquad d_{\{r-1\}},$
and the rational elements of minimal length are exactly
$c_{\{1\}}, \qquad d_{\{1\}}, \qquad \ell(c_{\{1\}})=\ell(d_{\{1\}})=N-(2r-3).$
\end{corollary}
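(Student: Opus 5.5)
By Theorem~\ref{thm:main-classification}, the rational elements are exactly $w_0$ and the elements $c_I,d_I$ with $I\neq\varnothing$. So the statement reduces to minimizing and maximizing the defect $\delta(I)=2r-2\min I-\#I$ from \eqref{eq:defect-I} over non-empty $I\subseteq[r-1]$. Since $\ell(d_I)=\ell(c_I)$ by $\tau$-symmetry, every defect value is attained by the $c$-lift and the $d$-lift in pairs. Lengths are read off as $\ell=N-\delta$, with $\delta(\varnothing)=0$ for $w_0$.

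First I dispose of length $N$. The element $w_0$ is the unique element of $W(D_r)$ of length $N$, and it is rational by Proposition~\ref{prop:family-rational}. Alternatively, $\delta(I)\geq1$ for $I\neq\varnothing$, which I verify in the next step.

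Next I treat length $N-1$, i.e.\ $\delta(I)=1$. Write $m=\min I$. Then $I\setminus\{m\}\subseteq\{m+1,\ldots,r-1\}$, so $\#I\leq r-m$. Hence
\[
\delta(I)\geq 2r-2m-(r-m)=r-m\geq1,
\]
with equality throughout forcing $m=r-1$ and $I=\{r-1\}$. This also shows every non-empty $I$ has positive defect. So the rational elements of length $N-1$ are exactly $c_{\{r-1\}}$ and $d_{\{r-1\}}$, which are distinct by Lemma~\ref{lem:faithful-parametrization}.

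Finally I find the minimal length, i.e.\ the maximal defect. Since $m\geq1$ and $\#I\geq1$, we get $\delta(I)\leq 2r-2-1=2r-3$. Equality requires $\min I=1$ and $\#I=1$, that is, $I=\{1\}$. Thus the minimal rational length is $N-(2r-3)$, attained exactly by $c_{\{1\}}$ and $d_{\{1\}}$. This agrees with the defect range $[1,2r-3]$ in Corollary~\ref{cor:length-layers}; there the coefficients $a_1=1$ and $a_{2r-3}=\binom{r-2}{r-2}=1$ confirm that each extreme layer contains exactly two elements.

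There is no real obstacle. The only point needing care is using the correct count $\#I\leq r-\min I$ in the equality case for defect one.
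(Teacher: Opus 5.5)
Your proof is correct and follows essentially the same route as the paper: via the classification you extremize the defect $\delta(I)=2r-2\min I-\#I$ over non-empty $I$, and you transfer the result to the $d$-half using $\ell(d_I)=\ell(c_I)$. Your bound $\#I\leq r-\min I$, which gives $\delta(I)\geq r-\min I\geq 1$, supplies the justification for the defect-one case that the paper asserts without detail.
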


\begin{proof}
The defect is $0$ only for $w_0$.  For non-empty $I$, equation~\eqref{eq:defect-I} gives
$N-\ell(c_I)=2r-2\min I-\#I.$
This is equal to $1$ only when $\min I=r-1$ and $\#I=1$, namely $I=\{r-1\}$.  It is maximal when $\min I=1$ and $\#I=1$, namely $I=\{1\}$.  The same argument applies to the $d$-half.
\end{proof}

For $r=5$, the defect polynomial is $F_5(q)=1+2\bigl(q+q^2+2q^3+3q^4+4q^5+3q^6+q^7\bigr)$.  Thus the length distribution, beginning from $N=20$, is
\begin{equation*}
\begin{array}{c|cccccccc}
 N-\ell&0&1&2&3&4&5&6&7\\
 \hline
 \#\{u\text{ rational}:N-\ell(u)=t\}&1&2&2&4&6&8&6&2.
\end{array}
\end{equation*}
This is the refined form of the count $31$.

\begin{example}
\label{ex:D5-lengths}
For $D_5$, formula~\eqref{eq:length-cI} gives, for instance,
\begin{equation*}
\begin{array}{c|cccccccc}
 I&\{1\}&\{2\}&\{3\}&\{4\}&\{1,2\}&\{1,3\}&\{2,4\}&\{1,2,3,4\}\\
 \hline
 \ell(c_I)&13&15&17&19&14&14&16&16.
\end{array}
\end{equation*}
The complete subset data are displayed below; together with the $d$-half and $w_0$ of length $20$, they account for all $31$ rational elements in $D_5$.  In the column headed $p_I$ we write only the non-trivial cycle; omitted points are fixed.
\begin{equation*}
\begin{array}{c|c|c|c}
 I&p_I&\ell(c_I)=\ell(d_I)&\deg(c_I)=\deg(d_I)\\
 \hline
 \{1\}&(1\ 5)&13&1\\
 \{2\}&(2\ 5)&15&2\\
 \{3\}&(3\ 5)&17&2\\
 \{4\}&(4\ 5)&19&2\\
 \{1,2\}&(1\ 2\ 5)&14&2\\
 \{1,3\}&(1\ 3\ 5)&14&2\\
 \{1,4\}&(1\ 4\ 5)&14&2\\
 \{2,3\}&(2\ 3\ 5)&16&3\\
 \{2,4\}&(2\ 4\ 5)&16&3\\
 \{3,4\}&(3\ 4\ 5)&18&3\\
 \{1,2,3\}&(1\ 2\ 3\ 5)&15&3\\
 \{1,2,4\}&(1\ 2\ 4\ 5)&15&3\\
 \{1,3,4\}&(1\ 3\ 4\ 5)&15&3\\
 \{2,3,4\}&(2\ 3\ 4\ 5)&17&4\\
 \{1,2,3,4\}&(1\ 2\ 3\ 4\ 5)&16&4.
\end{array}
\end{equation*}
For example, the row $I=\{2,3,4\}$ gives
$c_{\{2,3,4\}}=(-1,-3,-4,-5,2), \qquad d_{\{2,3,4\}}=(-1,-3,-4,5,-2),$
and these two vertices have degree four, with ordinary edges toggling $1,2,3$ whenever $2,3,4$ are present and one spin edge toggling $4$.
\end{example}

\subsection{The rationality graph as a subset graph}
\label{sec:subset-graph}

Theorem~\ref{thm:main-classification} gives a compact model for $\Gamma(D_r)$ that may be useful in applications to rational normal forms and cluster-type structures.  We spell it out as a purely combinatorial graph.

Let $\mathscr{B}_{r-1}$ be the graph with vertex set all subsets of $[r-1]$ and edges
\begin{equation}
  I\longleftrightarrow I\symdiff\{a\}
  \quad\text{if and only if}\quad
  \begin{cases}
    1\leq a\leq r-2\text{ and }a+1\in I,\\
    \text{or }a=r-1.
  \end{cases}
  \label{eq:B-graph}
\end{equation}
Then $\Gamma(D_r)$ is obtained from two copies of $\mathscr{B}_{r-1}$ by identifying only their empty-set vertices; in symbols, $\Gamma(D_r)\cong\mathscr{B}_{r-1}^{(c)}\cup_{\varnothing}\mathscr{B}_{r-1}^{(d)}$.
The edge labelled $a$ in the $c$-copy corresponds to left multiplication by $s_a$ for $a<r-1$ and by $s_{r-1}$ for $a=r-1$; in the $d$-copy it corresponds to $s_a$ for $a<r-1$ and by $s_r$ for $a=r-1$.

The graph $\mathscr{B}_{r-1}$ is connected.  We give a deletion algorithm because it is later useful for distance estimates.  Put $n=r-1$.  If $I\neq\varnothing$ and $m=\max I$, define the toggle sequence
\begin{equation}
  \omega_m=
  \begin{cases}
    (n),&m=n,\\
    (n,n-1,\ldots,m,m+1,\ldots,n),&m<n,
  \end{cases}
  \label{eq:omega-m}
\end{equation}
and apply its labels from left to right.  This sequence removes $m$ and leaves all elements below $m$ unchanged.  Indeed, because $m$ is the maximum of $I$, the descending segment first adds $n,n-1,\ldots,m+1$ one at a time; each addition is allowed because the next larger entry has just been added.  The label $m$ then removes $m$, and the ascending segment removes the temporary entries $m+1,\ldots,n$ in that order.  In formulas, for $m<n$,
\begin{equation}
  I
  \xleftrightarrow{n} I\cup\{n\}
  \xleftrightarrow{n-1} I\cup\{n-1,n\}
  \xleftrightarrow{n-2}\cdots
  \xleftrightarrow{m} (I\setminus\{m\})\cup\{m+1,\ldots,n\}
  \label{eq:delete-first-half}
\end{equation}
followed by
\begin{equation}
  (I\setminus\{m\})\cup\{m+1,\ldots,n\}
  \xleftrightarrow{m+1}
  (I\setminus\{m\})\cup\{m+2,\ldots,n\}
  \xleftrightarrow{m+2}\cdots
  \xleftrightarrow{n} I\setminus\{m\}.
  \label{eq:delete-second-half}
\end{equation}
Every toggle displayed in~\eqref{eq:delete-first-half}--\eqref{eq:delete-second-half} is allowed by \eqref{eq:B-graph}.  Repeating the procedure for the successive maxima of $I$ reaches $\varnothing$.  This gives a purely combinatorial proof, in type $D_r$ odd, of the connectedness part of Voloshyn's theorem.

The degree formula~\eqref{eq:degree-formula} becomes $\deg_{\mathscr{B}_{r-1}}(I)=1+\#(I\setminus\{1\})$ for $I\neq\varnothing$, while $\deg_{\mathscr{B}_{r-1}}(\varnothing)=1$.  After the two copies are glued at $\varnothing$, the common vertex has degree $2$.  Hence $\sum_{v\in\Gamma(D_r)}\deg(v)=2\sum_{I\subseteq[r-1]}\deg_{\mathscr{B}_{r-1}}(I)$, where the empty-set degree is counted once in each copy before gluing.  A direct summation gives
\begin{align*}
  \#E(\mathscr{B}_{r-1})
  &=\frac{1}{2}\left(1+\sum_{\varnothing\neq I\subseteq [r-1]}(1+\#(I\setminus\{1\}))\right)
  \\
  &=\frac{1}{2}\left(1+(2^{r-1}-1)+(r-2)2^{r-2}\right)
  \\
  &=2^{r-2}+(r-2)2^{r-3}.
\end{align*}
Therefore $\#E(\Gamma(D_r))=2^{r-1}+(r-2)2^{r-2}$.
For instance, $\#E(\Gamma(D_5))=40$; the corresponding degree distribution has total degree $2\cdot 1+13\cdot 2+12\cdot 3+4\cdot 4=80$.

The local neighbourhoods can be read without reference to the ambient Weyl group.  For $I\neq\varnothing$, put
$P(I)=\{a\in [r-2]:a+1\in I\}, \qquad Q(I)=P(I)\cup\{r-1\}.$
Then the neighbours of $c_I$ in $\Gamma(D_r)$ are
$\mathcal{N}(c_I)=\{c_{I\symdiff\{a\}}:a\in Q(I)\},$
where $c_\varnothing=w_0$, and similarly
$\mathcal{N}(d_I)=\{d_{I\symdiff\{a\}}:a\in Q(I)\}.$
The neighbours of the glued vertex are
$\mathcal{N}(w_0)=\{c_{\{r-1\}},d_{\{r-1\}}\}.$
Consequently, the graph distance from $c_I$ to $w_0$ is bounded by the total length of the deletion sequences in~\eqref{eq:omega-m}.  If $I=\{i_1<\cdots<i_k\}$ and $n=r-1$, apply $\omega_{i_k}$ first, then $\omega_{i_{k-1}}$, and continue in decreasing order of the indices until $\omega_{i_1}$.  After the $j$th deletion, the temporary entries introduced by that sequence have been removed, so the next prescribed index is again the maximum of the current subset.  The concatenated sequence therefore connects $I$ to $\varnothing$ in $\mathscr{B}_{r-1}$.  Hence
\begin{equation*}
  \operatorname{dist}_{\Gamma(D_r)}(c_I,w_0)
  \leq
  \sum_{j=1}^k\bigl(2(r-1-i_j)+1\bigr),
\end{equation*}
with the identical estimate for $d_I$.  In particular, because any path from a $c$-vertex to a $d$-vertex passes through $w_0$, one has
\begin{equation*}
  \operatorname{dist}_{\Gamma(D_r)}(c_I,d_J)
  =\operatorname{dist}_{\Gamma(D_r)}(c_I,w_0)
   +\operatorname{dist}_{\Gamma(D_r)}(w_0,d_J).
\end{equation*}
The exact diameter is not needed for the classification; the point is that connectedness, degrees, and cross-half distances follow from a finite-state subset calculus rather than from a search in the Weyl group of order $2^{r-1}r!$.

\subsection{The $D_5$ model}
\label{sec:D5-model}

We record the smallest odd case in full.  This is not used in the proof, but it provides a compact verification of the abstract subset graph and of the internal arrow formula.  Figure~\ref{fig:D5-paths} displays a labelled path in each half from $w_0$ to the corresponding valency-one vertex.

\begin{figure}[pos=t]
\centering
\begin{tikzpicture}[xscale=1.0,yscale=.88, every node/.style={font=\scriptsize}]
  \node (w0) at (0,0) {$w_0$};
  \node (c4) at (-1.4,-.9) {$c_4$};
  \node (c34) at (-2.25,-1.8) {$c_{34}$};
  \node (c234) at (-3.05,-2.7) {$c_{234}$};
  \node (c1234) at (-2.15,-3.6) {$c_{1234}$};
  \node (c134) at (-1.15,-4.5) {$c_{134}$};
  \node (c14) at (-.65,-5.4) {$c_{14}$};
  \node (c1) at (-.25,-6.3) {$c_1$};

  \node (d4) at (1.4,-.9) {$d_4$};
  \node (d34) at (2.25,-1.8) {$d_{34}$};
  \node (d234) at (3.05,-2.7) {$d_{234}$};
  \node (d1234) at (2.15,-3.6) {$d_{1234}$};
  \node (d134) at (1.15,-4.5) {$d_{134}$};
  \node (d14) at (.65,-5.4) {$d_{14}$};
  \node (d1) at (.25,-6.3) {$d_1$};

  \draw (w0)--node[left]{$4$}(c4);
  \draw (c4)--node[left]{$3$}(c34);
  \draw (c34)--node[left]{$2$}(c234);
  \draw (c234)--node[left]{$1$}(c1234);
  \draw (c1234)--node[left]{$2$}(c134);
  \draw (c134)--node[left]{$3$}(c14);
  \draw (c14)--node[left]{$4$}(c1);

  \draw (w0)--node[right]{$5$}(d4);
  \draw (d4)--node[right]{$3$}(d34);
  \draw (d34)--node[right]{$2$}(d234);
  \draw (d234)--node[right]{$1$}(d1234);
  \draw (d1234)--node[right]{$2$}(d134);
  \draw (d134)--node[right]{$3$}(d14);
  \draw (d14)--node[right]{$5$}(d1);
\end{tikzpicture}
\caption{A labelled subdiagram of $\Gamma(D_5)$ containing paths from $w_0$ to the two leaves.  We abbreviate $c_{\{i_1,\ldots,i_k\}}$ and $d_{\{i_1,\ldots,i_k\}}$ to $c_{i_1\cdots i_k}$ and $d_{i_1\cdots i_k}$.  Edge labels are simple reflections; the omitted vertices and edges are determined by the subset-toggle rules.}
\label{fig:D5-paths}
\end{figure}

Put $n=4$.  In the abstract graph $\mathscr{B}_4$, the edges with label $4$ are present for every subset because the spin toggle is always allowed:
\begin{align*}
 E_4=\bigl\{
 &\varnothing\!-\!4,
  1\!-\!14,
  2\!-\!24,
  3\!-\!34,
  12\!-\!124,
  13\!-\!134,
  23\!-\!234,
  123\!-\!1234
 \bigr\}.
\end{align*}
Here, for instance, $14$ means $\{1,4\}$.  The ordinary label-$1$ edges are exactly those pairs in which $2$ is present:
\begin{equation*}
  E_1=\{2\!-\!12,
        23\!-\!123,
        24\!-\!124,
        234\!-\!1234\}.
\end{equation*}
Similarly,
\begin{equation*}
  E_2=\{3\!-\!23,
        13\!-\!123,
        34\!-\!234,
        134\!-\!1234\},
\end{equation*}
where the condition is the presence of $3$, and
\begin{equation*}
  E_3=\{4\!-\!34,
        14\!-\!134,
        24\!-\!234,
        124\!-\!1234\},
\end{equation*}
where the condition is the presence of $4$.  Therefore
$\,\#E(\mathscr{B}_4)=\#E_4+\#E_1+\#E_2+\#E_3=8+4+4+4=20.$
The graph $\Gamma(D_5)$ consists of a $c$-copy and a $d$-copy of this graph, glued at $\varnothing=w_0$.  In the $c$-copy the abstract label $4$ is the simple reflection $s_4=s_{r-1}$, while in the $d$-copy it is $s_5=s_r$; the labels $1,2,3$ are the same in both copies.  Thus
$\#V(\Gamma(D_5))=1+2(2^4-1)=31, \qquad \#E(\Gamma(D_5))=2\cdot20=40.$

The internal root-poset graphs for individual vertices also have the predicted two-level form.  For $I=\{1,3\}$, one has $p_I=(1\ 3\ 5)$, and Proposition~\ref{prop:nu0-cI} gives
\begin{align*}
  A_{13}&=\{e_2-e_3,
            e_4-e_5\},\\
  B_{13}&=\{e_1-e_2,
            e_1-e_3,
            e_1-e_4,
            e_1-e_5\}.
\end{align*}
By Proposition~\ref{prop:exact-arrows-cI}, the outgoing arrows are
\begin{align*}
  e_2-e_3&\longrightarrow
    e_1-e_2,
    e_1-e_3,
    e_1-e_4,
    e_1-e_5,\\
  e_4-e_5&\longrightarrow
    e_1-e_4,
    e_1-e_5.
\end{align*}
No arrow starts from any element of $B_{13}$.  Thus $\nu_1(c_{13})=A_{13}$ and $\nu_2(c_{13})=\varnothing$.

For $I=\{2,4\}$, the cycle is $p_I=(2\ 4\ 5)$ and
$A_{24}=\{e_3-e_4\}, \qquad B_{24}=\{e_2-e_3,e_2-e_4,e_2-e_5\}.$
The unique $A$-vertex points to all three $B$-vertices:
$e_3-e_4\longrightarrow e_2-e_3, \,e_2-e_4, \,e_2-e_5.$
For the full subset $I=\{1,2,3,4\}$, all gaps have length zero, so
\begin{equation*}
  A_{1234}=\varnothing,
  \qquad
  B_{1234}=\{e_1-e_2,e_1-e_3,e_1-e_4,e_1-e_5\},
  \qquad
  \nu_1(c_{1234})=\varnothing.
\end{equation*}
These three examples illustrate configurations with two, one, and zero vertices in $A_I$, respectively; they are not an exhaustive list of the possible internal graphs in $D_5$.  Applying $\tau$ gives the corresponding $d$-vertices without changing the abstract two-level structure.

\section{Rational normal forms and concluding remarks}
\label{sec:normal-form-consequences}

Voloshyn proves that every rational Weyl group element, with any chosen representative, gives a solution of the rational-decomposition problem \cite[Theorem~2.7]{Voloshyn2026}.  Consequently, for every rational $u$ in Theorem~\ref{thm:main-classification} and every chosen representative $\overline u\in N_G(H)$, his construction yields rational maps $N:G\dashrightarrow N_-$ and $B:G\dashrightarrow B_+$ satisfying \eqref{eq:intro-normal-form}.  The classification therefore gives the complete list of rational Weyl-group indices in odd type $D$ for which this construction is guaranteed:
$w_0, \qquad c_I,\ d_I \quad(\varnothing\neq I\subseteq[r-1]).$
There are exactly $2^r-1$ such indices.  This is a count of Weyl-group indices, or equivalently of normal-form types after one representative has been fixed for each index; it is not a count of all representatives in $N_G(H)$.  Moreover, the statement uses only the implication ``rational element $\Rightarrow$ solution'' and does not assert the converse for arbitrary Weyl group elements.

The two-level calculation also gives a uniform bound on the stabilization depth in Voloshyn's recursive construction.  We record the precise statement, including the parabolic induction needed to pass from the root sequence to the recursive approximations.

\begin{proposition}[Two-step stabilization]
\label{prop:two-step-stabilization}
Let $u=c_I$ or $u=d_I$ with $\varnothing\neq I\subseteq[r-1]$.  Start Voloshyn's recursion from the longest-element solution and choose the special representatives used in \cite[Proposition~2.9]{Voloshyn2026}.  Then the approximations $P_k$ and $N_k$ stabilize by stage $2$.  If $A_I=\varnothing$, they stabilize by stage $1$.
\end{proposition}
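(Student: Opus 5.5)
The plan is to reduce the stabilization of Voloshyn's recursive approximations to the stabilization of the root sequence $\nu_k(u)$, which Proposition~\ref{prop:exact-arrows-cI} already controls: $\nu_1(c_I)=A_I$ and $\nu_m(c_I)=\varnothing$ for $m\geq2$. By \eqref{eq:tau-rationality}, $\nu_k(d_I)=\tau(\nu_k(c_I))$, and $\tau$ respects the parabolic structure and the special representatives. So I would treat $u=c_I$ and transport everything to $d_I$ by conjugation with $\tau$.

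The first step recalls the shape of Voloshyn's recursion in \cite[Proposition~2.9]{Voloshyn2026}. At stage $k$, the correction $P_{k+1}P_k^{-1}$ (and likewise $N_{k+1}N_k^{-1}$) lies in a unipotent subgroup generated by the root subgroups $U_\gamma$ with $\gamma\in\Adj(\nu_k(u))$. Equivalently, the correction lies in the unipotent radical attached to the parabolic determined by the simple roots occurring in $\nu_k(u)$. The recursion is started from the longest-element solution, which corresponds to $\nu_{-1}=\Pi_+$.

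The main point is the parabolic induction. One shows by induction on $k$ that $P_k$ and $N_k$ are already determined modulo the subgroup generated by $U_\gamma$, $\gamma\in\Adj(\nu_k(u))$. The argument has two parts:
\begin{enumerate}
\item Conjugation by $\overline u$ carries this subgroup into the subgroup generated by root groups indexed by $u(\Adj(\nu_k(u)))$.
\item Only the positive part, namely $\nu_{k+1}(u)$, can feed back into the next correction. The negative part is absorbed into $B_+$ by the choice of special representatives.
\end{enumerate}
Once $\nu_{k}(u)=\varnothing$, the correction group at stage $k$ is trivial, and hence $P_{k+1}=P_k$ and $N_{k+1}=N_k$. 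For $c_I$ we have $\nu_2=\varnothing$, so $P_k$ and $N_k$ are constant from stage $2$ on. If $A_I=\varnothing$, then $\nu_1(c_I)=A_I=\varnothing$, and stabilization occurs already at stage $1$.

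I expect the main obstacle to be the precise indexing and the parabolic bookkeeping. Specifically, I would need to check two things:
\begin{enumerate}
\item That the stage-$k$ correction is governed by $\nu_k$ rather than $\nu_{k-1}$, which is an off-by-one issue.
\item That the special representatives make the negative-root contributions genuinely vanish, rather than merely lie in $B_+$ up to torus factors.
\end{enumerate}
To settle both, I would first verify the claim directly on $c_{\{1\}}$, where $\nu_0$, $\nu_1$ and the corresponding Levi are fully explicit. After that I would write the general inductive statement using the Levi factor of the parabolic generated by the supports of roots in $\Adj(\nu_k(u))$. Its unipotent radical is normalized at each step because $\nu_{k+1}\subseteq\nu_k$, by Lemma~\ref{lem:voloshyn-cycle}.
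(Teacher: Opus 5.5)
Your overall strategy matches the paper's: a parabolic induction governed by $\nu_k(u)$, in which conjugation by $\overline u$ either pushes a root group into $B_+$ or lands it on a root of $\nu_{k+1}(u)$, stopping once $\nu_k(u)=\varnothing$. Your indexing and the final use of $\nu_1(c_I)=A_I$, $\nu_2(c_I)=\varnothing$ are also right. The gap is that the inductive statement you propose is not the right one, and as written it does not close.

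The recursion is $P_k=[P_{k-1}]_{\oplus}\,\overline u\,[P_{k-1}]_-\,\overline u^{-1}$. So what $\nu_k(u)$ controls is the factor $[P_k]_-$, not the correction $P_{k+1}P_k^{-1}$, which involves conjugation by $[P_k]_\oplus\in B_+$, torus part included. Your sign bookkeeping is also reversed. For positive root groups $U_\gamma$ with $\gamma\in\Adj(\nu_k(u))$, the positive images $u(\gamma)\in\nu_{k+1}(u)$ give $U_{u(\gamma)}\subseteq B_+$, while the negative images give root groups outside $B_+$. Moreover, the unipotent radical of the parabolic consists of root groups \emph{outside} its Levi, so it is not an equivalent description.

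What works is the membership statement $P_k(g)\in\mathcal P_k$. Here $\mathcal P_k$ is the standard parabolic generated by $B_+$ and the $U_{-\alpha}$ with $\alpha\in\Delta^k=\Delta\cap\Adj(\nu_k(u))$. Then $[P_{k-1}]_-\in N_-\cap\mathcal P_{k-1}$ is a product of elements of the simple groups $U_{-\alpha}$, $\alpha\in\Delta^{k-1}$. Each conjugate $\overline u\,U_{-\alpha}\,\overline u^{-1}=U_{-u(\alpha)}$ lies in $B_+$ if $u(\alpha)<0$. If $u(\alpha)>0$ it lies in $\mathcal P_k$, because then $u(\alpha)\in\nu_k(u)$ is supported on $\Delta^k$. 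If $\nu_k(u)=\varnothing$, then $\mathcal P_k=B_+$, so $[P_k]_-=1$ and $P_{k+1}=[P_k]_\oplus=P_k$. The statement for $N_k$ follows because $N_k$ is a function of $P_k$. In this form both of your anticipated obstacles disappear: torus factors sit in $[\,\cdot\,]_\oplus\in B_+\subseteq\mathcal P_k$ and are harmless.

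Second, the base case is missing. The recursion does not produce $P_0$ by a conjugation step from a stage $-1$. It starts from $P_0(g)=\widetilde B(g)\,\overline{w_0}\,\overline u^{-1}$, so the formal identity $\nu_0(u)=u(\Adj(\Pi_+))\cap\Pi_+$ says nothing about where $P_0$ lives. Without $P_0\in\mathcal P_0$, the factor $[P_0]_-$ is a priori an arbitrary element of $N_-$, and nothing controls $P_1$.

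The needed input is that $\Inv(w_0u^{-1})=\{\beta\in\Pi_+:u^{-1}(\beta)\in\Pi_+\}=\nu_0(u)$. Hence every simple reflection in a reduced word for $w_0u^{-1}$ lies in $\Delta^0$; the paper takes this from Voloshyn's Proposition~3.9. Therefore $\overline{w_0}\,\overline u^{-1}\in\mathcal P_0$, and so $P_0\in\mathcal P_0$.

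Finally, transporting the group-level recursion by $\tau$ is unnecessary, and it would require checking that $\tau$ is compatible with $\widetilde B$, $\widetilde N$ and the chosen representatives. The argument only uses $\nu_1(d_I)=\tau(A_I)$ and $\nu_2(d_I)=\varnothing$, so it applies to $d_I$ verbatim.
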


\begin{proof}
For both families, Proposition~\ref{prop:exact-arrows-cI} and its $\tau$-image give
\begin{equation}
  \nu_0(c_I)\supseteq\nu_1(c_I)=A_I\supseteq\nu_2(c_I)=\varnothing,
  \qquad
  \nu_0(d_I)\supseteq\nu_1(d_I)=\tau(A_I)\supseteq\nu_2(d_I)=\varnothing.
  \label{eq:stabilization-depth}
\end{equation}
Take $v=w_0$ in the relative construction of \cite[Definition~2.8 and Proposition~2.9]{Voloshyn2026}.  Since $w_0^{-1}(\Pi_-)=\Pi_+$, the relative sequence $\nu^k(u\mid w_0)$ is exactly the sequence $\nu_k(u)$ used in this paper.  Let $\widetilde B$ and $\widetilde N$ denote the rational maps for the longest-element solution.  With the special representatives fixed in \cite[Section~3.3]{Voloshyn2026}, the recursion is
\begin{align*}
  P_0(g)&=\widetilde B(g)\overline{w_0}\,\overline u^{-1},\\
  P_k(g)&=[P_{k-1}(g)]_{\oplus}\,\overline u [P_{k-1}(g)]_-\overline u^{-1}\quad(k\geq1),\\
  N_k(g)&=\widetilde N(g)\widetilde N(P_k(g)\overline u)^{-1}.
\end{align*}
Here $x=[x]_- [x]_{\oplus}$ denotes the Gaussian decomposition on the big cell.

For $k\geq0$, set $\Delta^k=\Delta\cap\Adj(\nu_k(u))$, and let $\mathcal P_k$ be the standard parabolic subgroup generated by $B_+$ and the negative simple-root subgroups $U_{-\alpha}$ with $\alpha\in\Delta^k$.  We claim that $P_k(g)\in\mathcal P_k$ for generic $g$ and every $k\geq0$.

For the base step, choose a reduced decomposition $uw_0^{-1}=s_{i_1}\cdots s_{i_m}$.  Proposition~3.9 of \cite{Voloshyn2026} implies $\alpha_{i_j}\in\Delta^0$ for every $j$.  For the special representatives above, the factorization used in the proof of \cite[Proposition~2.9]{Voloshyn2026} writes $P_0(g)$ as a product of elements of $B_+$ and negative simple-root elements $y_{i_j}(1)\in U_{-\alpha_{i_j}}$.  Every factor therefore lies in $\mathcal P_0$, and hence $P_0(g)\in\mathcal P_0$.

Assume inductively that $P_{k-1}(g)\in\mathcal P_{k-1}$.  On the big cell of the standard parabolic $\mathcal P_{k-1}$, uniqueness of Gaussian decomposition gives $[P_{k-1}(g)]_-\in N_-\cap\mathcal P_{k-1}$.  As an algebraic subgroup, $N_-\cap\mathcal P_{k-1}$ is generated by the simple negative root subgroups $U_{-\alpha}$ with $\alpha\in\Delta^{k-1}$; hence $[P_{k-1}(g)]_-$ may be written as a finite product of elements from these subgroups.  It is therefore enough to consider one factor in $U_{-\alpha}$ with $\alpha\in\Delta^{k-1}$.  Conjugation by $\overline u$ sends it to $U_{-u(\alpha)}$.  If $u(\alpha)<0$, this is a positive-root subgroup and is contained in $B_+\subseteq\mathcal P_k$.  If $u(\alpha)>0$, then $\alpha\in\Adj(\nu_{k-1}(u))$ gives $u(\alpha)\in\nu_k(u)$.  Every simple root in the support of $u(\alpha)$ lies below $u(\alpha)$ and hence belongs to $\Delta^k$.  The standard parabolic $\mathcal P_k$ contains the negative root subgroup associated with every positive root supported on $\Delta^k$; consequently $U_{-u(\alpha)}\subseteq\mathcal P_k$.  Since $[P_{k-1}(g)]_{\oplus}\in B_+\subseteq\mathcal P_k$, the recursion yields $P_k(g)\in\mathcal P_k$.  This proves the claim.

Equation~\eqref{eq:stabilization-depth} gives $\Delta^2=\varnothing$, so $\mathcal P_2=B_+$ and $P_2(g)\in B_+$ generically.  Corollary~3.19 of \cite{Voloshyn2026} then implies $P_k=P_2$ for all $k\geq2$.  The displayed formula for $N_k$ shows simultaneously that $N_k=N_2$ for all $k\geq2$.  If $A_I=\varnothing$, then $\nu_1(u)=\varnothing$, so the same argument applies with $1$ in place of $2$.
\end{proof}

Thus the special recursive implementation requires at most two correction steps for every non-longest rational element in the classification.  This stage bound concerns the special representatives and recursion above; the existence of the rational decomposition itself holds for every representative by \cite[Theorem~2.7]{Voloshyn2026}.  At the combinatorial level, the bound is the assertion that neither $\Gamma_{c_I}$ nor $\Gamma_{d_I}$ admits a directed walk of length two.

\subsection{Conclusion}
\label{sec:conclusion}

We have proved that, for odd $r\geq 5$, the rational Weyl group elements of type $D_r$ are exactly
$w_0, \qquad c_I,d_I\quad(\varnothing\neq I\subseteq [r-1]),$
where $c_I,d_I$ are the two signed cyclic elements defined in~\eqref{eq:cI-intro}--\eqref{eq:dI-intro}.  This proves the closed formula
$\#\{u\in W(D_r):u\text{ rational}\}=2^r-1$
and determines the entire rationality graph.  The graph is the union of two subset graphs glued at $w_0$, and its two leaves are precisely
\begin{align*}
  c_{\{1\}}&:\quad e_1\mapsto -e_r,\quad e_j\mapsto -e_j\ (2\leq j\leq r-1),\quad e_r\mapsto e_1,\\
  d_{\{1\}}&:\quad e_1\mapsto e_r,\quad e_j\mapsto -e_j\ (2\leq j\leq r-1),\quad e_r\mapsto -e_1.
\end{align*}
The classification reduces the odd type-$D$ rationality problem to a rigid and explicit subset calculus.  The more irregular type-$A$ enumerations in \cite{Voloshyn2026} indicate that any analogous description there would require additional combinatorial structure.  In odd type $D$, the spin-node automorphism explains the symmetry between the two halves; the one-step rigidity theorem and the descent induction, rather than the automorphism alone, prove that the two halves exhaust the rationality graph and intersect only at $w_0$.

\section*{Funding}
No funding was received for this work.

\section*{Declaration of competing interest}
The authors declare no competing interests.

\section*{Data availability}
No datasets were generated or analyzed in this work.
\section*{Declaration of Generative AI and AI-Assisted Technologies in the Writing Process}
During the preparation of this work, the authors used DeepSeek to build a specialized agent for solving mathematical problems, which was employed to generate an initial proof of the main theorem. After using this tool, the authors reviewed and edited the content as needed and take full responsibility for the content of the manuscript.

\end{document}